\let\orgdescriptionlabel\descriptionlabel
\renewcommand*{\descriptionlabel}[1]{
	\let\orglabel\label
	\let\label\@gobble
	\phantomsection
	\edef\@currentlabel{#1} 	
	\let\label\orglabel
	\orgdescriptionlabel{#1}
}
\def\th@plain{
	\thm@notefont{}
	\itshape
}
\def\th@definition{
	\thm@notefont{}
	\normalfont
}
\g@addto@macro\th@remark{\thm@headpunct{}}
\g@addto@macro\th@definition{\thm@headpunct{}}
\g@addto@macro\th@plain{\thm@headpunct{}}
\definecolor{myblue}{rgb}{.8, .8, 1}
  \newcommand*\mybluebox[1]{
    \colorbox{myblue}{\hspace{1em}#1\hspace{1em}}}
\crefname{equation}{}{}
\crefname{chapter}{Chapter}{Chapters}
\crefname{item}{item}{items}
\crefname{figure}{Figure}{Figures}
\crefname{theorem}{Theorem}{Theorems}
\crefname{lemma}{Lemma}{Lemmas}
\crefname{proposition}{Proposition}{Propositions}
\crefname{corollary}{Corollary}{Corollarys}
\crefname{definition}{Definition}{Definitions}
\crefname{fact}{Fact}{Facts}
\crefname{example}{Example}{Examples}
\crefname{algorithm}{Algorithm}{Algorithms}
\crefname{remark}{Remark}{Remarks}
\crefname{note}{Note}{Notes}
\crefname{notation}{Notation}{Notations}
\crefname{case}{Case}{Cases}
\crefname{exercise}{Exercise}{Exercises}
\crefname{question}{Question}{Questions}
\crefname{claim}{Claim}{Claims}
\crefname{enumi}{}{}
\numberwithin{equation}{section}
\theoremstyle{plain}
\newtheorem{theorem}{Theorem}[section]
\newtheorem{fact}[theorem]{Fact}
\newtheorem{lemma}[theorem]{Lemma}
\newtheorem{proposition}[theorem]{Proposition}
\theoremstyle{definition}
\newtheorem{definition}[theorem]{Definition}
\newtheorem{remark}[theorem]{Remark}
\newcommand{\inte}{\ensuremath{\operatorname{int}}}
\newcommand{\cont}{\ensuremath{\operatorname{cont}}}
\newcommand{\lev}{\ensuremath{\operatorname{lev}}}
\newcommand{\spn}{\ensuremath{{\operatorname{span} \,}}}
\newcommand{\dom}{\ensuremath{\operatorname{dom}}}
\newcommand{\Fix}{\ensuremath{\operatorname{Fix}}}
\newcommand{\Id}{\ensuremath{\operatorname{Id}}}
\newcommand{\Pro}{\ensuremath{\operatorname{P}}}
\newcommand{\I}{\ensuremath{\operatorname{I}}}
\newcommand{\J}{\ensuremath{\operatorname{J}}}
\newcommand{\N}{\ensuremath{\operatorname{N}}}
\newcommand{\sri}{\ensuremath{\operatorname{sri}}}
\newcommand{\cone}{\ensuremath{\operatorname{cone}}}
\newcommand{\minimize}{\ensuremath{\operatorname{minimize}}}
\newcommand{\subject}{\ensuremath{\operatorname{subject~ to}}}
\providecommand{\norm}[1]{\lVert#1\rVert}
\providecommand{\innp}[1]{\langle#1\rangle}
\providecommand{\Innp}[1]{\Big\langle#1\Big\rangle}
\newcommand\scalemath[2]{\scalebox{#1}{\mbox{\ensuremath{\displaystyle #2}}}}
\begin{document}

\title{ \sffamily Projecting onto intersections of  halfspaces and hyperplanes}

\author{
         Hui\ Ouyang\thanks{
                 Mathematics, University of British Columbia, Kelowna, B.C.\ V1V~1V7, Canada.
                 E-mail: \href{mailto:hui.ouyang@alumni.ubc.ca}{\texttt{hui.ouyang@alumni.ubc.ca}}.}
                 }

\date{September 21, 2020}

\maketitle

\begin{abstract}
\noindent
It is well-known that the sequence of iterations of the composition of projections onto closed affine subspaces  converges linearly to the projection onto the intersection of the affine subspaces 
when the sum of   the corresponding   linear subspaces is closed. 
Inspired by this,  in this work, we systematically study 
the relation between  the projection onto intersection of halfspaces and hyperplanes, and the composition of projections onto halfspaces and hyperplanes.
 In addition, as  by-products, we provide the   Karush-Kuhn-Tucker   conditions for characterizing the optimal solution of convex optimization with finitely many equality and inequality constraints in Hilbert spaces and construct an explicit formula for the projection onto the intersection of hyperplane and halfspace.
\end{abstract}

{\small
\noindent
{\bfseries 2020 Mathematics Subject Classification:}
{Primary 47N10, 41A50 , 65K10;
Secondary 65K05, 90C25, 90C90.}

\noindent{\bfseries Keywords:}
projection, halfspace, hyperplane, best approximation mapping, linear convergence, Karush-Kuhn-Tucker conditions, and convex optimization. 
}

\section{Introduction} \label{sec:Introduction}
Throughout this paper, we assume that
\begin{empheq}[box = \mybluebox]{equation*}
\text{$\mathcal{H}$ is a real Hilbert space},
\end{empheq}
with inner product $\innp{\cdot,\cdot}$ and induced norm $\|\cdot\|$.

Throughout the paper, we use the convention that $\mathbb{N}:=\{0,1,2,\ldots\}$. Let $m \in \mathbb{N} \smallsetminus \{0\}$.  For every $ i \in \{1,\ldots, m\}$,  let $u_{i}$  be in $\mathcal{H}$ and let $\eta_{i}$ be in $\mathbb{R}$.  Set 
\begin{empheq}[box=\mybluebox]{equation*} 
W_{i} := \{x \in \mathcal{H} ~:~ \innp{x,u_{i}} \leq \eta_{i} \} \quad \text{and} \quad H_{i} := \{x \in \mathcal{H} ~:~ \innp{x,u_{i}} = \eta_{i} \}.
\end{empheq}

According to Deutsch's \cite[Theorems~9.8 and 9.35]{D2012} and an easy translation argument,  the sequence of  iterations of the  composition of projections onto closed affine subspaces converges linearly to the projection onto the intersection of the affine subspaces when the sum of the linear subspaces which are  parallel to the  affine subspaces is closed. Moreover, intersections of halfspaces and hyperplanes are frequently seen in  constraints of road design problems \cite[Section~2]{BK2015}, least norm problems \cite[Section~5.2]{BCS2019}, constrained regression problems \cite[Page~153]{BV2004}, and least square problems \cite[Page~226]{Meyer2000}.

Note that although the Dykstra's algorithm allows one to compute best approximations from an intersection of finitely many closed convex sets (see, e.g., \cite[Page~207]{D2012}), the practical manipulation of the Dykstra's algorithm is very complicated. Moreover, although there exist  explicit formulae for  $\Pro_{ W_{1} \cap W_{2}}$ and $\Pro_{ H_{1} \cap W_{2}  }$ (see, \cref{fact:W1capW2:u1u2LD}, \cref{fact:Projec:Intersect:halfspace}, \cref{theor:u1u2LD:PBAM}, and \cref{them:Formula:H1capW2} below), given a point $x \in \mathcal{H}$, the formulae of $\Pro_{ W_{1} \cap W_{2}}x$ and $\Pro_{ H_{1} \cap W_{2}  }x$  depend on the linear dependence relation of $u_{1}$ and $u_{2}$, and on the \enquote{region} where the $x$ is located in.  Hence, it is worthwhile to explore other easy   ways to find the best approximation from the intersection of halfspaces and hyperplanes or feasibility point in that intersection.

Inspired by \cite[Theorem~9.8]{D2012}, in this work, our goal is \emph{to study the relation between  the projection onto intersection of halfspaces and hyperplanes, and the composition of projections onto halfspaces and hyperplanes.}

The main results in this work are the following:
\begin{enumerate}
	\item[\textbf{R1:}] \cref{theorem:Halfspaces} summarizes the relation between $\Pro_{W_{1} \cap W_{2}}$ and $ \Pro_{W_{2}}\Pro_{W_{1}} $. In particular, if $u_{1}$ and $u_{2}$ are linear dependent or orthogonal, then $ \Pro_{W_{2}}\Pro_{W_{1}} = \Pro_{W_{1} \cap W_{2}}$. Otherwise, for $\gamma :=\frac{|\innp{u_{1}, u_{2}}| }{\norm{u_{1}} \norm{u_{2}}} \in \left[0,1\right[\,$,   if $\innp{u_{1},u_{2}} <0 $, then $(\forall x \in \mathcal{H})$ $(\forall k \in \mathbb{N})$  $\norm{ (\Pro_{W_{2}}\Pro_{W_{1}})^{k}x - \Pro_{W_{1} \cap W_{2}}x} \leq \gamma^{k} \norm{x - \Pro_{W_{1} \cap W_{2}}x}$; if $\innp{u_{1},u_{2}} >0$, then $(\forall x \in \mathcal{H})$ $\Pro_{W_{2}}\Pro_{W_{1}}x \in W_{1} \cap W_{2}$: particularly, if $ x \in W_{1} \cup W_{2}$, then  $\Pro_{W_{2}}\Pro_{W_{1}}x =\Pro_{W_{1} \cap W_{2}}x $; if $ x \in W^{c}_{1} \cap W^{c}_{2}$ with $ \Pro_{H_{1}}x \in W_{2} $, then  $\Pro_{W_{2}}\Pro_{W_{1}}x=\Pro_{H_{1}}x  =\Pro_{W_{1} \cap W_{2}}x $; if $ x \in W^{c}_{1} \cap W^{c}_{2}$ with $ \Pro_{H_{1}}x \notin W_{2} $, then $\Pro_{W_{2}}\Pro_{W_{1}}x \in W_{1} \cap W_{2} \smallsetminus \{ \Pro_{W_{1} \cap W_{2}}x  \}$.

	\item[\textbf{R2:}] \cref{them:KKT:EQINEQ:Hilbert} states  the KKT conditions associated with convex optimization with finitely many equality and inequality constraints in Hilbert spaces.

	\item[\textbf{R3:}] \cref{them:Formula:H1capW2} shows an explicit formula for the projection onto intersection of hyperplane and halfspace. 
	
	\item[\textbf{R4:}]  \cref{theom:HyperplaneHalfspace:BAM} concludes the relations of $\Pro_{H_{1} \cap W_{2}}$ with $\Pro_{W_{2}}\Pro_{H_{1}}$ and  $\Pro_{H_{1}} \Pro_{W_{2}}$. In particular, if
	 $u_{1}$ and $u_{2}$ are linearly dependent or orthogonal, then $\Pro_{W_{2}}\Pro_{H_{1}} =\Pro_{H_{1} \cap W_{2}} =\Pro_{H_{1}} \Pro_{W_{2}}$; Otherwise, for $\gamma :=\frac{|\innp{u_{1}, u_{2}}| }{\norm{u_{1}} \norm{u_{2}}} \in \left[0,1\right[\,$, $(\forall x \in \mathcal{H})$ $(\forall k \in \mathbb{N})$
	 $\norm{ (\Pro_{W_{2}}\Pro_{H_{1}})^{k}x - \Pro_{H_{1} \cap W_{2}}x} \leq \gamma^{k} \norm{x -\Pro_{H_{1} \cap W_{2}}x }$, and 
	  if only $  \Pro_{H_{1}} \Pro_{W_{2}}x \notin   W_{2}$, then 	$(\forall x \in W_{2})$ $(\forall k \in \mathbb{N})$ $\norm{ \Pro_{W_{2}} (\Pro_{H_{1}}\Pro_{W_{2}} )^{k} x-  \Pro_{H_{1} \cap W_{2}}x   } \leq \gamma^{k} \norm{x -\Pro_{H_{1} \cap W_{2}}x  }$, and $ (\forall x \in W^{c}_{2}) $ $(\forall k \in \mathbb{N})$ $\norm{  (\Pro_{H_{1}}\Pro_{W_{2}} )^{k} - \Pro_{H_{1} \cap W_{2}}x }  \leq \gamma^{k} \norm{x -\Pro_{H_{1} \cap W_{2}}x  }$.				
\end{enumerate}	
	
	In view of  our results mentioned above, if $u_{1}$ and $u_{2}$ are linearly dependent  or  orthogonal, then $\Pro_{ W_{2} }\Pro_{ W_{1} }=\Pro_{ W_{1} \cap W_{2}  }$, and  $\Pro_{ W_{2} }\Pro_{ H_{1} }=\Pro_{ H_{1} }\Pro_{ W_{2} }=\Pro_{ H_{1} \cap W_{2}  }$. Let $x \in \mathcal{H}$. 
	Moreover, the sequence $\left((\Pro_{W_{2}}\Pro_{W_{1}})^{k}x \right)_{k \in \mathbb{N}}$ converges linearly or in one step for finding the desired best approximation point $\Pro_{ H_{1} \cap W_{2}  }x$ or converges in one step for finding the feasibility point in $W_{1} \cap W_{2}$. In addition, the sequence $\left((\Pro_{W_{2}}\Pro_{H_{1}})^{k}x \right)_{k \in \mathbb{N}}$ always converges linearly to the desired best approximation point $\Pro_{ H_{1} \cap W_{2}}x$.

	Although  the result on KKT conditions  is classical and well-known, some KKT conditions are only shown in finite-dimensional spaces  (see, e.g., \cite[Page~244]{BV2004} and \cite[Theorem~28.3]{R1970}), some include only inequality constraints without equality constraints (see, e.g., \cite[Proposition~27.21]{BC2017}, \cite[Theorem~3.78]{AmirBeck},  \cite[Page~249]{Luenberger1969}, and \cite[Pages~94]{Schirotzek2007}), and some act as only necessary optimality conditions (see, e.g., \cite[Theorem~3.78]{AmirBeck}, \cite[Page~249]{Luenberger1969}, and \cite[Pages~94 and 274]{Schirotzek2007}).
The result on KKT conditions presented in \cref{them:KKT:EQINEQ:Hilbert} characterizes the optimal solution of the  convex optimization with finitely many equality and inequality constraints, which  is  a generalization of  the version presented in  \cite[Page~244]{BV2004} from finite-dimensional spaces to Hilbert spaces and from differentiable function to subdifferentiable function, and of the version shown in \cite[Proposition~27.21]{BC2017} from inequality constrains to inequality and equality constants.

	The organization of the rest of the paper is the following.
	We display some auxiliary results  in \cref{sec:Auxiliary}. 	
In \cref{sec:hyperplane}, we collect the fact on the linear convergence of the composition of finitely many projections onto hyperplanes and construct an explicit formula of projection onto finitely many hyperplanes.  
In 	\cref{sec:Halfspaces}, we systematically study the relation between $\Pro_{W_{2}} \Pro_{W_{1}}$ and $\Pro_{W_{1} \cap W_{2}}$.
In	\cref{sec:Projection:HyperpnaeHalfspace}, we aim to  construct   explicit formulae for the projection onto the intersection of hyperplane and halfspace, which plays a critical role for us to investigate the relations of $\Pro_{H_{1} \cap W_{2}}$ with  $\Pro_{W_{2}}\Pro_{H_{1}}$ and  $\Pro_{H_{1}} \Pro_{W_{2}}$   in \cref{sec:com:hyperplane:halfspace}. To this end,
as a by-product, in \cref{sec:Projection:HyperpnaeHalfspace}, we also provide the  KKT conditions associated with convex optimization with finitely many equality and inequality constraints in Hilbert spaces.

We now turn to the notation used in this paper.
Let $D$ be a subset of $\mathcal{H}$. $D^{c}:=\mathcal{H} \smallsetminus D$ is the \emph{complementary set} of $D$. The \emph{interior}  of $D$ is the largest open set that is contained in $D$; it is denoted by $\inte D$.
The \emph{orthogonal complement} of $D$ is the set $ D^{\perp} :=\{x \in \mathcal{H}~:~ (\forall y \in D)~ \innp{x,y}=0 \}$. 
$D$ is an \emph{affine subspace} of $\mathcal{H}$ if   $D \neq \varnothing$ and $(\forall \rho\in\mathbb{R})$ $\rho D +(1-\rho  )D=D$.
In addition, $D$   is a \emph{cone} if $D =\mathbb{R}_{++}D $.  The polar cone of $D$ is $D^{\ominus} := \{ u \in \mathcal{H}~:~ \sup  \innp{D , u} \leq 0  \}$.  The \emph{conical hull} of $D$ is the intersection of all the cones in $\mathcal{H}$ containing $D$, i.e., the smallest cone in $\mathcal{H}$ containing $D$. It is denoted by $\cone D$. 
Let  $C$ be a nonempty convex subset of $\mathcal{H}$. 
	Let $x \in \mathcal{H}$. The \emph{normal cone} to $C$ at $x$ is 
	\begin{align*}
	\N_{C} x :=\begin{cases}
	\{u \in \mathcal{H} ~:~ \sup \innp{C-x, u} \leq 0 \}, \quad &\text{if} ~ x \in C;\\
	\varnothing, \quad & \text{otherwise}.
	\end{cases}
	\end{align*} 
The \emph{strong relative interior} of $C$ is
$\sri C :=\{ x \in C ~:~ \cone (C-x) = \overline{\spn} (C -x)\}$.
Suppose that  $C$ is a nonempty closed convex subset of $\mathcal{H}$. The \emph{projector} (or \emph{projection operator}) onto $C$ is the operator, denoted by
$\Pro_{C}$,  that maps every point in $\mathcal{H}$ to its unique projection onto $C$.

Let $\mathcal{K}$ be a real Hilbert space. Denote by $\mathcal{B} (\mathcal{H}, \mathcal{K}) := \{ T: \mathcal{H} \rightarrow \mathcal{K} ~:~ T ~\text{is  linear and bounded} \}$. Let $T \in \mathcal{B} (\mathcal{H}, \mathcal{K}) $. The adjoint of $T$ is the unique operator $T^{*} \in \mathcal{B} (\mathcal{K},\mathcal{H}) $ that satisfies $(\forall x \in \mathcal{H})$ $(\forall y \in \mathcal{K})$ $\innp{Tx,y} =\innp{x,T^{*}y}$.
 Let $f : \mathcal{H} \to \left]-\infty, +\infty\right]$ be \emph{proper}, that is, $\dom f :=\{x \in \mathcal{H} ~:~ f(x) < +\infty\} \neq \varnothing$. Denote the \emph{domain of continuity} of $f$ by $\cont f:= \{x \in \mathcal{H} ~:~ f(x) \in \mathbb{R}~\text{and}~f~\text{is continuous at} ~x  \}$.
The  \emph{subdifferential} of $f$ is the set-valued operator 
\begin{align*}
\partial f: \mathcal{H} \to 2^{\mathcal{H}} : x \mapsto \{ u \in \mathcal{H} ~:~ (\forall y \in \mathcal{H})  \innp{y-x, u} +f(x) \leq f(y) \}.
\end{align*}
The indicator function of a subset $A$ of $\mathcal{H}$ is the function $\iota_{A}:\mathcal{H} \to \left]-\infty, +\infty\right] : x \mapsto \begin{cases}
0, \quad \text{if } x \in A;\\
+\infty, \quad \text{otherwise}.  
\end{cases} $
Denote by $\Gamma_{0} (\mathcal{H})$ the set of all proper lower semicontinuous  convex functions from $\mathcal{H}$ to $\left]-\infty, +\infty\right]$.
Let $u$ be in $\mathcal{H}$. Then $\ker u := \{x \in
\mathcal{H} ~:~ \innp{x,u} =0 \}$ is the \emph{kernel of $u$}. The \emph{set of fixed
	points of the operator $T: \mathcal{H} \to \mathcal{H}$} is denoted by $\Fix T$, i.e., $\Fix T := \{x \in \mathcal{H} ~:~ Tx=x\}$.

For other notation not explicitly defined here, we refer the reader to \cite{BC2017}.

\section{Auxiliary results} \label{sec:Auxiliary}
In this section, we provide some results to be used in the sequel.

\subsection*{Linearly independent vectors}
The following well-known results will be used frequently in our proofs later. For completeness, we attach the easy proof below as well.

\begin{fact} \label{lem:basic:u1u2:LinerDep}
	Let $u_{1}$ and $u_{2}$ be in $\mathcal{H}$. The following statements hold.
	\begin{enumerate}
		\item \label{item:lem:basic:u1u2:LinerDep:LD:u1neq0}  Suppose  $u_{1} \neq 0$. Then $ u_{1}, u_{2} $ are linearly dependent $\Leftrightarrow $ $u_{2} =\frac{\innp{u_{2},u_{1}}}{ \innp{u_{1}, u_{1}}} u_{1}$  $\Leftrightarrow $  $\norm{u_{1}} \norm{u_{2}} = | \innp{u_{1}, u_{2}}|$   $\Leftrightarrow $ 
		either $u_{2} = \frac{\norm{u_{2}}}{\norm{u_{1}}} u_{1}$ or $u_{2} = -\frac{\norm{u_{2}}}{\norm{u_{1}}} u_{1}$. 
		\item \label{item:lem:basic:u1u2:LinerDep:LD} $ u_{1}, u_{2} $ are linearly dependent $\Leftrightarrow$ $\norm{u_{1}} \norm{u_{2}} = | \innp{u_{1}, u_{2}}| $. 
		\item \label{item:lem:basic:u1u2:LinerDep:LID} $\norm{u_{1}} \norm{u_{2}}  > | \innp{u_{1}, u_{2}}| $ if and only if $ u_{1}, u_{2} $ are  linearly independent.
	\end{enumerate}
\end{fact}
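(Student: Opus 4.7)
My plan is to prove \ref{item:lem:basic:u1u2:LinerDep:LD:u1neq0} by a cyclic chain of implications built around one Pythagorean-type identity, and then derive \ref{item:lem:basic:u1u2:LinerDep:LD} and \ref{item:lem:basic:u1u2:LinerDep:LID} as immediate corollaries.

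For \ref{item:lem:basic:u1u2:LinerDep:LD:u1neq0}, I would set up four implications. First, if $u_{1}, u_{2}$ are linearly dependent and $u_{1}\neq 0$, then $u_{2}=\lambda u_{1}$ for some $\lambda\in\mathbb{R}$; taking $\innp{\cdot,u_{1}}$ forces $\lambda=\innp{u_{2},u_{1}}/\norm{u_{1}}^{2}$, which is exactly the second condition. From this formula, taking norms immediately gives $\norm{u_{1}}\norm{u_{2}}=|\innp{u_{1},u_{2}}|$, the third condition. The key step is the implication from the third to the second; for this I would use the identity
\begin{equation*}
\NNorm{u_{2} - \frac{\innp{u_{2},u_{1}}}{\norm{u_{1}}^{2}} u_{1}}^{2} = \norm{u_{2}}^{2} - \frac{|\innp{u_{2},u_{1}}|^{2}}{\norm{u_{1}}^{2}},
\end{equation*}
obtained by expanding the left-hand square. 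Under the hypothesis $\norm{u_{1}}\norm{u_{2}}=|\innp{u_{1},u_{2}}|$ the right-hand side vanishes, forcing $u_{2}=\frac{\innp{u_{2},u_{1}}}{\norm{u_{1}}^{2}}u_{1}$. To pass from the second condition to the fourth, note that once $u_{2}=\frac{\innp{u_{2},u_{1}}}{\norm{u_{1}}^{2}}u_{1}$ is known, $|\innp{u_{2},u_{1}}|=\norm{u_{1}}\norm{u_{2}}$, so the coefficient $\frac{\innp{u_{2},u_{1}}}{\norm{u_{1}}^{2}}$ equals $\pm\frac{\norm{u_{2}}}{\norm{u_{1}}}$ according to the sign of $\innp{u_{2},u_{1}}$. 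Finally, the fourth condition is manifestly a linear-dependence relation, closing the cycle.

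For \ref{item:lem:basic:u1u2:LinerDep:LD}, if $u_{1}=0$ then both sides of the equivalence are trivially true (the pair is dependent and $0=|\innp{0,u_{2}}|=\norm{0}\norm{u_{2}}$); otherwise \ref{item:lem:basic:u1u2:LinerDep:LD:u1neq0} applies. For \ref{item:lem:basic:u1u2:LinerDep:LID}, I would combine \ref{item:lem:basic:u1u2:LinerDep:LD} with the Cauchy--Schwarz inequality $\norm{u_{1}}\norm{u_{2}}\geq|\innp{u_{1},u_{2}}|$ (itself a direct consequence of the displayed identity when $u_{1}\neq 0$, and trivial otherwise): linear independence is the negation of linear dependence, which by \ref{item:lem:basic:u1u2:LinerDep:LD} is the negation of equality in Cauchy--Schwarz, i.e.\ strict inequality.

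The only genuinely non-routine step is the equality case of Cauchy--Schwarz, namely the passage from $\norm{u_{1}}\norm{u_{2}}=|\innp{u_{1},u_{2}}|$ back to $u_{2}$ being a scalar multiple of $u_{1}$; the displayed Pythagorean identity handles this cleanly, and all other steps are algebraic.
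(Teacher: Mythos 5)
Your proof is correct and follows essentially the same route as the paper: both hinge on the identity $\bigl\lVert u_{2}-\tfrac{\innp{u_{2},u_{1}}}{\norm{u_{1}}^{2}}u_{1}\bigr\rVert^{2}=\norm{u_{2}}^{2}-\tfrac{|\innp{u_{1},u_{2}}|^{2}}{\norm{u_{1}}^{2}}$ to handle the equality case of Cauchy--Schwarz, reduce \cref{item:lem:basic:u1u2:LinerDep:LD} to the case $u_{1}=0$ plus \cref{item:lem:basic:u1u2:LinerDep:LD:u1neq0}, and obtain \cref{item:lem:basic:u1u2:LinerDep:LID} by negation via Cauchy--Schwarz. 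Your cyclic organization of the four conditions in \cref{item:lem:basic:u1u2:LinerDep:LD:u1neq0} is, if anything, slightly more explicit than the paper's about the final condition with $\pm\norm{u_{2}}/\norm{u_{1}}$, but the substance is identical.
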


\begin{proof}
	
	\cref{item:lem:basic:u1u2:LinerDep:LD:u1neq0}: Because $u_{1} \neq 0$,  we know that $ u_{1}, u_{2} $ are linearly dependent if and only if $u_{2} =\beta u_{1}$ for some $\beta \in \mathbb{R} $. Take inner product with $u_{1}$ for both sides of $u_{2} =\beta u_{1}$ to obtain that $\innp{u_{1},u_{2}} =\beta \innp{u_{1}, u_{1}}$, which implies that $\beta=\frac{\innp{u_{1},u_{2}}}{ \innp{u_{1}, u_{1}}}$. Hence, $ u_{1}, u_{2} $ are linearly dependent if and only if $u_{2} =\frac{\innp{u_{1},u_{2}}}{ \innp{u_{1}, u_{1}}} u_{1}$. 
	
	On the other hand, $0 \leq \Innp{u_{2} -\frac{\innp{u_{1},u_{2}}}{ \innp{u_{1}, u_{1}}}  u_{1}, u_{2} -\frac{\innp{u_{1},u_{2}}}{ \innp{u_{1}, u_{1}}}  u_{1}} = \innp{u_{2}, u_{2}} - \frac{\innp{u_{1},u_{2}}^{2}}{ \innp{u_{1}, u_{1}}} =\norm{u_{2}}^{2} -\frac{ \innp{u_{1},u_{2}}^{2} }{\norm{u_{1}}^{2}}$. Hence, 
	$\norm{u_{1}}^{2}\norm{u_{2}}^{2} = | \innp{u_{1}, u_{2}}|^{2}$ if and only if $u_{2} =\frac{\innp{u_{2},u_{1}}}{ \innp{u_{1}, u_{1}}} u_{1}$. 	Altogether, \cref{item:lem:basic:u1u2:LinerDep:LD:u1neq0} is true.

	\cref{item:lem:basic:u1u2:LinerDep:LD}: If $u_{1}=0$, then   $\norm{u_{1}} \norm{u_{2}}  =0= | \innp{u_{1}, u_{2}}| $ and $ u_{1}, u_{2} $ are linearly dependent.  So,  \cref{item:lem:basic:u1u2:LinerDep:LD} follows from  \cref{item:lem:basic:u1u2:LinerDep:LD:u1neq0}.

	\cref{item:lem:basic:u1u2:LinerDep:LID}: By Cauchy-Schwartz inequality,  $\norm{u_{1}} \norm{u_{2}} \geq  | \innp{u_{1}, u_{2}}| $. Hence,   \cref{item:lem:basic:u1u2:LinerDep:LD} implies 	\cref{item:lem:basic:u1u2:LinerDep:LID}.
\end{proof}

\begin{fact} {\rm \cite[Theorem~6.5-1]{Kreyszig1989}} \label{fact:Gram:inver}  
	Let $ a_{1}, \ldots, a_{m} $ be points in $\mathcal{H}$. Then  $a_{1}, \ldots, a_{m}$ are linearly independent  if and only if the  Gram matrix
	\begin{align} \label{eq:GramMatrix}
	G(a_{1}, \ldots, a_{m}) :=
	\begin{pmatrix} 
	\norm{a_{1}}^{2} &\innp{a_{1},a_{2}} & \cdots & \innp{a_{1}, a_{m}}  \\ 
	\innp{a_{2},a_{1}} & \norm{a_{2}}^{2} & \cdots & \innp{a_{2},a_{m}} \\
	\vdots & \vdots & ~~& \vdots \\
	\innp{a_{m},a_{1}} & \innp{a_{m},a_{2}} & \cdots & \norm{a_{m}}^{2} \\
	\end{pmatrix} 
	\end{align}
	is invertible.
\end{fact}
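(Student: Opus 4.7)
The plan is to reduce the equivalence to a statement about the kernel of $G := G(a_{1}, \ldots, a_{m})$, viewed as a symmetric linear map on $\mathbb{R}^{m}$. Since $G$ is symmetric, invertibility is equivalent to $\ker G = \{0\}$, so it suffices to show that the vectors are linearly dependent if and only if $G$ has a nontrivial kernel. The bridge between linear (in)dependence in $\mathcal{H}$ and $\ker G$ is the quadratic-form identity
\[
c^{\top} G c \;=\; \sum_{i,j=1}^{m} c_{i} c_{j} \innp{a_{i}, a_{j}} \;=\; \Norm{\sum_{i=1}^{m} c_{i} a_{i}}^{2},
\]
valid for every $c = (c_{1}, \ldots, c_{m})^{\top} \in \mathbb{R}^{m}$, combined with the coordinate-wise relation $(Gc)_{j} = \innp{\sum_{i} c_{i} a_{i},\, a_{j}}$ for each $j \in \{1, \ldots, m\}$.

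For the \emph{only if} direction I would start from a nontrivial linear relation $\sum_{i} c_{i} a_{i} = 0$ with $c \neq 0$; taking the inner product with each $a_{j}$ gives $(Gc)_{j} = 0$ for every $j$, so $c \in \ker G \smallsetminus \{0\}$ and $G$ fails to be invertible. For the \emph{if} direction, assume $G$ is singular, so there exists $c \neq 0$ with $Gc = 0$. Then the displayed identity forces $\Norm{\sum_{i} c_{i} a_{i}}^{2} = c^{\top} G c = 0$, hence $\sum_{i} c_{i} a_{i} = 0$ with $c \neq 0$, which exhibits a nontrivial linear dependence among $a_{1}, \ldots, a_{m}$.

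There is essentially no serious obstacle here: both implications rely only on bilinearity of $\innp{\cdot, \cdot}$ and the positive-semidefiniteness of $G$. The only point that needs care is that $\mathcal{H}$ is real, so $G$ is genuinely symmetric (not merely Hermitian) and its quadratic form coincides with a squared norm; this is what lets one pass from $c^{\top} G c = 0$ directly to $\sum_{i} c_{i} a_{i} = 0$ in a single step without an auxiliary positivity argument.
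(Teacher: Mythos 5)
Your proof is correct and complete: the identity $c^{\top}Gc=\bigl\lVert\sum_{i}c_{i}a_{i}\bigr\rVert^{2}$ together with $(Gc)_{j}=\innp{\sum_{i}c_{i}a_{i},a_{j}}$ gives both implications exactly as you describe, and this is the standard textbook argument. Note that the paper itself offers no proof of this statement --- it is recorded as a Fact cited to \cite[Theorem~6.5-1]{Kreyszig1989} --- so there is no in-paper argument to compare against; your proof matches the classical one in that reference. (One tiny remark: invertibility of a square matrix is equivalent to trivial kernel regardless of symmetry, so the appeal to symmetry in your first step is unnecessary, though harmless.)
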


\begin{lemma} \label{lem:capMi}
	Let $a_{1}, \ldots, a_{m} $ be linearly independent points in $\mathcal{H}$ and let  $\xi_{1}, \ldots, \xi_{m} $ be in $\mathbb{R}$. For every $( i \in \{1, \ldots, m\})$, set   $M_{i} :=\{x \in \mathcal{H} ~:~ \innp{x, a_{i}} = \xi_{i} \}$. Then $\cap^{m}_{i=1} M_{i}  \cap \spn \{a_{1}, \ldots, a_{m} \} $ is a singleton. Consequently, $\cap^{m}_{i=1} M_{i} \neq \varnothing$. 
\end{lemma}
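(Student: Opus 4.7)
The plan is to parametrize candidates in $\spn\{a_1,\ldots,a_m\}$ by their coefficient vectors and reduce the problem to a finite-dimensional linear system governed by the Gram matrix. Specifically, any $x \in \spn\{a_1,\ldots,a_m\}$ has the form $x = \sum_{j=1}^{m} \beta_j a_j$ for some $(\beta_1,\ldots,\beta_m) \in \mathbb{R}^m$, and the condition $x \in \cap_{i=1}^{m} M_i$ translates, via $\innp{x,a_i} = \xi_i$, into the linear system
\begin{align*}
\sum_{j=1}^{m} \beta_j \innp{a_j,a_i} = \xi_i \quad (\forall i \in \{1,\ldots,m\}),
\end{align*}
that is, $G(a_1,\ldots,a_m)\, \beta = \xi$, where $G(a_1,\ldots,a_m)$ is the Gram matrix defined in \cref{eq:GramMatrix} and $\xi = (\xi_1,\ldots,\xi_m)^\top$.

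Next, I would invoke \cref{fact:Gram:inver}: since $a_1,\ldots,a_m$ are linearly independent, $G(a_1,\ldots,a_m)$ is invertible, so the linear system above has a unique solution $\beta = G(a_1,\ldots,a_m)^{-1}\xi$. Setting $\bar{x} := \sum_{j=1}^{m}\beta_j a_j$ gives a point that lies in both $\cap_{i=1}^{m} M_i$ and $\spn\{a_1,\ldots,a_m\}$, establishing existence.

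For uniqueness within the span, suppose $x, y \in \cap_{i=1}^{m} M_i \cap \spn\{a_1,\ldots,a_m\}$. Writing $x = \sum_j \beta_j a_j$ and $y = \sum_j \beta'_j a_j$, both coefficient vectors satisfy the same linear system $G\beta = \xi = G\beta'$, and invertibility of $G$ forces $\beta = \beta'$, hence $x = y$. This proves that $\cap_{i=1}^{m} M_i \cap \spn\{a_1,\ldots,a_m\}$ is a singleton, from which the consequence $\cap_{i=1}^{m} M_i \neq \varnothing$ is immediate.

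There is no substantial obstacle here; the only point requiring care is making the translation between the affine condition $\innp{x,a_i} = \xi_i$ and the Gram-matrix system transparent, and then applying \cref{fact:Gram:inver} cleanly. The argument is essentially an application of Fredholm-type reasoning in the finite-dimensional subspace $\spn\{a_1,\ldots,a_m\}$.
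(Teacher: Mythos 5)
Your proposal is correct and follows essentially the same route as the paper: both reduce membership in $\cap_{i=1}^{m}M_{i}\cap\spn\{a_{1},\ldots,a_{m}\}$ to the linear system $G(a_{1},\ldots,a_{m})\beta=\xi$ and invoke \cref{fact:Gram:inver} to solve it uniquely. Your explicit uniqueness argument (two solutions force equal coefficient vectors, hence equal points by linear independence) just spells out what the paper's chain of equivalences leaves implicit.
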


\begin{proof}
	According to \cref{fact:Gram:inver},
	the  Gram matrix $G(a_{1}, \ldots, a_{m})$ defined as \cref{eq:GramMatrix}
	is invertible. 
	Let $\beta_{1}, \ldots, \beta_{m} \in \mathbb{R}^{m}$.  Then,
	\begin{align*}
	\sum^{m}_{i=1} \beta_{i} a_{i}  \in \cap^{m}_{i=1} M_{i}  \cap	\spn \{a_{1}, \ldots, a_{m} \} 
	& \Leftrightarrow  \begin{cases}
	\innp{ \sum^{m}_{i=1} \beta_{i} a_{i} , a_{1}} =\xi_{1}\\
	\quad \quad \quad \vdots \\
	\innp{ \sum^{m}_{i=1} \beta_{i} a_{i} , a_{m}} =\xi_{m}\\
	\end{cases} \\
& \Leftrightarrow 
	\begin{pmatrix} 
	\norm{a_{1}}^{2} &\innp{a_{1},a_{2}} & \cdots & \innp{a_{1}, a_{m}}  \\ 
	\vdots & \vdots & ~~& \vdots \\
	\innp{a_{m},a_{1}} & \innp{a_{m},a_{2}} & \cdots & \norm{a_{m}}^{2} \\
	\end{pmatrix}  
	\begin{pmatrix}
	\beta_{1}\\
	\vdots\\
	\beta_{m}
	\end{pmatrix} = 
	\begin{pmatrix}
	\xi_{1}\\
	\vdots\\
	\xi_{m}
	\end{pmatrix} \\
& \Leftrightarrow  \begin{pmatrix}
	\beta_{1}\\
	\vdots\\
	\beta_{m}
	\end{pmatrix} =  	G(a_{1}, \ldots, a_{m}) ^{-1}
	\begin{pmatrix}
	\xi_{1}\\
	\vdots\\
	\xi_{m}
	\end{pmatrix},
	\end{align*}
	which implies that $( a_{1}, \cdots, a_{m}) G(a_{1}, \ldots, a_{m}) ^{-1}
	(\xi_{1}, \cdots, \xi_{m} )^{\intercal} \in \cap^{m}_{i=1} M_{i} \cap	\spn \{a_{1}, \ldots, a_{m} \}  \neq \varnothing$.
\end{proof}

\subsection*{Best approximation mappings and projections}
\begin{definition} \label{def:BAM}  {\rm \cite[Definition~3.1]{BOyW2020BAM}}
	Let $G: \mathcal{H} \to \mathcal{H}$, and let $\gamma \in \left[0,1\right[\,$.
	Then $G$ is a \emph{best approximation mapping with constant $\gamma$} (for short $\gamma$-BAM), if
	\begin{enumerate}
		\item \label{def:BAM:Fix}  $\Fix G$ is a nonempty closed  convex subset of $\mathcal{H}$,
		\item  \label{def:BAM:eq} $\Pro_{\Fix G}G=\Pro_{\Fix G}$, and
		\item  \label{def:BAM:Ineq} 
		$(\forall x \in \mathcal{H})$ $\norm{Gx -\Pro_{\Fix G}x} \leq \gamma \norm{x - \Pro_{\Fix G}x}$.
	\end{enumerate}	
	In particular, if $\gamma$ is unknown or not necessary to point out, we just say that $G$ is a BAM.
\end{definition}	

The following result plays an important role in the proofs  of some of our main results later. 

\begin{fact} {\rm  \cite[Proposition~3.10]{BOyW2020BAM}} \label{fact:BAM:Properties}
	Let $\gamma \in \left[0,1\right[\,$ and let $G: \mathcal{H} \to \mathcal{H}$. Suppose that $G$ is a $\gamma$-BAM. Then $(\forall x \in \mathcal{H})$ $(\forall k \in \mathbb{N})$  $\norm{G^{k}x -\Pro_{\Fix G}x} \leq \gamma^{k} \norm{x- \Pro_{\Fix G}x}$.
\end{fact}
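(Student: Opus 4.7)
My plan is to prove this by induction on $k$. The base case $k=0$ holds trivially because $\gamma^{0}=1$ and $G^{0}x = x$, so both sides coincide. The case $k=1$ is precisely condition \cref{def:BAM:Ineq} of \cref{def:BAM}. The heart of the argument will be a clean inductive step that propagates the one-step contraction estimate to iterates of arbitrary order.

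The key observation I will exploit is that condition \cref{def:BAM:eq} of \cref{def:BAM}, namely $\Pro_{\Fix G} G = \Pro_{\Fix G}$, iterates to yield $\Pro_{\Fix G} G^{k} = \Pro_{\Fix G}$ for every $k \in \mathbb{N}$. This follows by a trivial secondary induction: assuming $\Pro_{\Fix G} G^{k} = \Pro_{\Fix G}$, one has $\Pro_{\Fix G} G^{k+1} = (\Pro_{\Fix G} G) G^{k} = \Pro_{\Fix G} G^{k} = \Pro_{\Fix G}$. In particular, $\Pro_{\Fix G}(G^{k} x) = \Pro_{\Fix G} x$ for all $x \in \mathcal{H}$ and $k \in \mathbb{N}$.

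Given this identity, the inductive step is direct. Assuming the desired inequality holds at level $k$, I apply condition \cref{def:BAM:Ineq} of \cref{def:BAM} to the point $G^{k} x$ and combine it with the identity $\Pro_{\Fix G}(G^{k} x) = \Pro_{\Fix G} x$ to obtain
\begin{align*}
\norm{G^{k+1} x - \Pro_{\Fix G} x} &= \norm{G(G^{k} x) - \Pro_{\Fix G}(G^{k} x)} \\
&\leq \gamma \norm{G^{k} x - \Pro_{\Fix G}(G^{k} x)} \\
&= \gamma \norm{G^{k} x - \Pro_{\Fix G} x} \\
&\leq \gamma^{k+1} \norm{x - \Pro_{\Fix G} x},
\end{align*}
where the final inequality invokes the inductive hypothesis. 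Since the argument requires only the three defining properties of a BAM together with elementary induction, there is no substantive obstacle; the one subtlety, though hardly difficult, is recognizing that property \cref{def:BAM:eq} must be iterated so that $\Pro_{\Fix G}(G^{k} x)$ collapses back to $\Pro_{\Fix G} x$, allowing the single-step contraction and the inductive hypothesis to compose cleanly.
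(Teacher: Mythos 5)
Your argument is correct and complete: the iteration of property \cref{def:BAM:eq} to get $\Pro_{\Fix G}G^{k}=\Pro_{\Fix G}$, followed by applying \cref{def:BAM:Ineq} at the point $G^{k}x$, is exactly the right way to compose the one-step contraction into the $k$-step estimate. Note that the paper itself gives no proof of this statement; it is quoted as a fact from the external reference \cite[Proposition~3.10]{BOyW2020BAM}, so there is nothing in the text to compare against, but your induction is the standard argument one would expect there.
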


\begin{fact} {\rm \cite[Example~29.20]{BC2017}} \label{fact:Projec:halfspaces}
	Let $u \in \mathcal{H} \smallsetminus \{0\}$, let $\eta \in \mathbb{R}$, and set $W:=\{x \in \mathcal{H} ~:~ \innp{x,u} \leq \eta\}$.  Then $W \neq \varnothing$ and 
	\begin{align*}
	(\forall x \in \mathcal{H}) \quad \Pro_{W} x =\begin{cases}
	x, \quad  &\text{if } \innp{x,u} \leq \eta;\\
	x+ \frac{\eta - \innp{x,u}}{\norm{u}^{2}}u, \quad &\text{if } \innp{x,u} > \eta.
	\end{cases}
	\end{align*}	
\end{fact}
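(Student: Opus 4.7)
The plan is to establish two assertions: that $W$ is nonempty, and that the piecewise formula realizes $\Pro_{W}$. Since $u \neq 0$, the point $x_{0} := \frac{\eta}{\norm{u}^{2}} u$ satisfies $\innp{x_{0}, u} = \eta \leq \eta$, hence lies in $W$, so $W \neq \varnothing$. Moreover $W$ is the preimage of $\left]-\infty, \eta\right]$ under the continuous linear functional $\innp{\cdot, u}$, so $W$ is closed and convex, and $\Pro_{W} x$ is well-defined for every $x \in \mathcal{H}$.

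For the formula itself, the case $\innp{x, u} \leq \eta$ is trivial, since then $x \in W$ and $\Pro_{W} x = x$. In the nontrivial case $\innp{x, u} > \eta$, I would set
\[
p := x + \frac{\eta - \innp{x, u}}{\norm{u}^{2}}\, u
\]
and verify that $p = \Pro_{W} x$ through the standard variational characterization of the projection onto a nonempty closed convex set: namely, $p = \Pro_{W} x$ if and only if $p \in W$ and $\innp{x - p, y - p} \leq 0$ for every $y \in W$.

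The first condition follows from the direct computation $\innp{p, u} = \innp{x, u} + \frac{\eta - \innp{x, u}}{\norm{u}^{2}}\norm{u}^{2} = \eta$, so in fact $p$ lies on the bounding hyperplane $\{z : \innp{z,u} = \eta\} \subseteq W$. For the inequality, observe that $x - p = \frac{\innp{x, u} - \eta}{\norm{u}^{2}}\, u$ is a \emph{positive} scalar multiple of $u$ (thanks to the hypothesis $\innp{x, u} > \eta$), so for any $y \in W$,
\[
\innp{x - p, y - p} = \frac{\innp{x, u} - \eta}{\norm{u}^{2}}\bigl(\innp{y, u} - \eta\bigr) \leq 0,
\]
because $\innp{y, u} \leq \eta$. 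This completes the verification.

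No real obstacle arises; the argument is essentially a one-line check once the candidate $p$ is written down. The only points demanding mild care are the sign conventions in the projection characterization and the observation that the projection necessarily lands on the boundary hyperplane $H = \{z : \innp{z,u}=\eta\}$ in the nontrivial case, which is the geometric reason the correction term moves $x$ by the multiple of $u$ needed to reduce $\innp{x,u}$ exactly to $\eta$.
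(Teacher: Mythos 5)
Your argument is correct. Note that the paper does not prove this statement at all: it is imported as a fact from \cite[Example~29.20]{BC2017}, so there is no in-paper proof to compare against. Your verification is the standard one: exhibit the candidate $p$, check $\innp{p,u}=\eta$ so that $p\in W$, and confirm the variational characterization $\innp{x-p,y-p}\leq 0$ for all $y\in W$ using that $x-p$ is a positive multiple of $u$ when $\innp{x,u}>\eta$; this is exactly the argument one would find in the cited source, and all the sign checks are in order.
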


\begin{fact} {\rm \cite[Fact~1.8]{DH2006II}}  \label{fact:AsubseteqB:Projection}
	Let $A$ and $B$ be two nonempty closed convex subsets of $\mathcal{H}$. Let $A \subseteq B $ and $x \in \mathcal{H}$. Then  $\Pro_{B}x \in A$ if and only if $ \Pro_{B}x  =\Pro_{A}x $.
\end{fact}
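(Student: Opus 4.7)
The plan is to prove both directions separately, each using only the variational characterization of the projection onto a nonempty closed convex set together with the hypothesis $A \subseteq B$.

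For the forward implication, I would assume $\Pro_{B}x \in A$ and aim to verify that $\Pro_{B}x$ satisfies the defining property of $\Pro_{A}x$, namely that it is the closest point of $A$ to $x$. Pick any $a \in A$; since $A \subseteq B$, we have $a \in B$, and the definition of $\Pro_{B}x$ as the nearest point of $B$ to $x$ yields $\norm{x - \Pro_{B}x} \leq \norm{x - a}$. Combined with $\Pro_{B}x \in A$, this says $\Pro_{B}x$ minimizes $\norm{x - \cdot\,}$ over $A$, and uniqueness of the projection onto the nonempty closed convex set $A$ then forces $\Pro_{B}x = \Pro_{A}x$.

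For the reverse implication, assume $\Pro_{B}x = \Pro_{A}x$. Since $A$ is nonempty closed convex, $\Pro_{A}x$ is well-defined and belongs to $A$, so $\Pro_{B}x = \Pro_{A}x \in A$.

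There is no serious obstacle here; the only subtlety is making sure that $\Pro_{A}x$ is well-defined, which is why the hypothesis that $A$ is a \emph{nonempty} closed convex set is assumed (in tandem with the same hypothesis on $B$, so that $\Pro_{B}x$ is itself well-defined). Both directions together give the stated equivalence.
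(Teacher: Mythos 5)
Your proof is correct. Note that the paper does not prove this statement at all: it is quoted as a known fact with a citation to Deutsch--Hundal, so there is no in-paper argument to compare against. Your two-direction argument is the standard one: the forward direction correctly combines the minimizing property of $\Pro_{B}x$ over the larger set $B$ with the membership $\Pro_{B}x \in A$ and the uniqueness of the projection onto the nonempty closed convex set $A$, and the reverse direction is immediate from $\Pro_{A}x \in A$. Nothing is missing.
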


\begin{fact} {\rm \cite[Example~29.18]{BC2017}} \label{fact:Projec:Hyperplane}
	Suppose that $u \in \mathcal{H} \smallsetminus \{0\}$, let $\eta \in \mathbb{R}$, and set $H := \{ x \in \mathcal{H} ~:~ \innp{x,u} = \eta \}$. Then 
	\begin{align*}
	(\forall x \in \mathcal{H}) \quad \Pro_{H} x = x+ \frac{\eta - \innp{x,u}}{\norm{u}^{2}}u.
	\end{align*}
\end{fact}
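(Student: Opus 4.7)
The plan is to write down the candidate point given by the formula and verify it is the projection via the standard variational characterization of projections in a Hilbert space: $y = \Pro_{H}x$ if and only if $y \in H$ and $\innp{x - y, z - y} \leq 0$ for every $z \in H$. Because $H$ is an affine subspace, this inequality can be upgraded to the equality $\innp{x - y, z - y} = 0$, or equivalently, $x - y$ lies in the orthogonal complement of the parallel linear subspace $\ker u$. Note first that $H$ is a nonempty closed convex set: closedness and convexity are immediate from $H$ being the preimage of $\{\eta\}$ under the continuous linear functional $\innp{\cdot, u}$, and nonemptiness will follow from the explicit candidate below.

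Concretely, I would set $y := x + \frac{\eta - \innp{x,u}}{\norm{u}^{2}} u$ and carry out three short checks. First, evaluate $\innp{y, u}$ to confirm $y \in H$; this uses only $u \neq 0$. Second, observe that $x - y$ is a scalar multiple of $u$ and therefore lies in $\spn\{u\} \subseteq (\ker u)^{\perp}$. Third, combine these two facts: for any $z \in H$, both $y$ and $z$ satisfy $\innp{\cdot, u} = \eta$, so $z - y \in \ker u$ and hence $\innp{x - y, z - y} = 0$. By the characterization above, $y = \Pro_{H}x$.

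There is no real obstacle here, since the candidate formula is already handed to us; the argument reduces to two one-line inner-product checks together with uniqueness of the nearest point to a nonempty closed convex set. Alternatively, one could bypass the variational characterization and directly expand
\[
\norm{x - z}^{2} = \norm{x - y}^{2} + 2 \innp{x - y, y - z} + \norm{y - z}^{2},
\]
note that the cross term vanishes by the same orthogonality, and conclude $\norm{x - y} \leq \norm{x - z}$ with equality only when $z = y$, whence $y = \Pro_{H}x$.
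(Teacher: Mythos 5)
Your argument is correct. Note, however, that the paper states this result as a \emph{Fact} imported verbatim from \cite[Example~29.18]{BC2017} and supplies no proof of its own, so there is nothing to compare against; your verification (check $y:=x+\frac{\eta-\innp{x,u}}{\norm{u}^{2}}u$ lies in $H$, observe $x-y\in\spn\{u\}$ is orthogonal to $z-y\in\ker u$ for every $z\in H$, and invoke the variational characterization of the projection onto a nonempty closed convex set) is the standard argument and is essentially the one given in the cited reference.
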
	
\begin{remark} \label{remark:FactWFactH}
	Let $u \in \mathcal{H} \smallsetminus \{0\}$ and $\eta \in \mathbb{R}$. Set $W:=\{x \in \mathcal{H} ~:~ \innp{x,u} \leq \eta\}$ and $H := \{ x \in \mathcal{H} ~:~ \innp{x,u} = \eta \}$.  If $\eta >0 $, then $\frac{2 \eta}{ \norm{u}^{2}} u \in W^{c}$. Otherwise, $u \in W^{c}$ and $W^{c} \neq \varnothing$. Moreover, 
	combine \Cref{fact:Projec:halfspaces,fact:Projec:Hyperplane} to see that 
	$(\forall x \in W^{c})$ $\Pro_{W}x =x+ \frac{\eta - \innp{x,u}}{\norm{u}^{2}}u= \Pro_{H}x$. 
\end{remark}

\begin{fact} \label{fact:ExchangeProj}{\rm \cite[Lemma~2.3]{BOyW2020BAM}}
	Let $M$ and $N$ be closed affine subspaces of $\mathcal{H}$ with $M \cap N \neq \varnothing$. Assume  $M \subseteq N$ or $N \subseteq M$. Then $\Pro_{M}\Pro_{N}=\Pro_{N}\Pro_{M} =\Pro_{M \cap N} $.
\end{fact}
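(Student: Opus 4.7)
The plan is to exploit the symmetry of the statement and reduce to the case $M \subseteq N$ (the other case follows by interchanging the roles of $M$ and $N$). Under this reduction $M \cap N = M$, so the two claimed identities collapse to $\Pro_{M}\Pro_{N} = \Pro_{N}\Pro_{M} = \Pro_{M}$.

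First I would dispatch the easy identity $\Pro_{N}\Pro_{M} = \Pro_{M}$: for every $x \in \mathcal{H}$, the point $\Pro_{M}x$ already lies in $M \subseteq N$, and the projector onto a closed convex set fixes points of that set, so $\Pro_{N}(\Pro_{M}x) = \Pro_{M}x$. (This is also exactly the content of \cref{fact:AsubseteqB:Projection} applied with $A = \{\Pro_{M}x\}$ or, more cleanly, just the defining property of $\Pro_{N}$.)

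The substantive identity is $\Pro_{M}\Pro_{N} = \Pro_{M}$. For this I would invoke the orthogonality characterization of the projection onto a closed affine subspace. Let $M_{0}$ and $N_{0}$ denote the linear subspaces parallel to $M$ and $N$; since $M \subseteq N$, we have $M_{0} \subseteq N_{0}$. Fix any base point $y_{0} \in M \subseteq N$. Translating by $-y_{0}$ reduces the projection onto $N$ to the projection onto the closed linear subspace $N_{0}$, so $x - \Pro_{N}x \in N_{0}^{\perp} \subseteq M_{0}^{\perp}$. For any $m \in M \subseteq N$, the difference $\Pro_{N}x - m$ belongs to $N_{0}$, whence Pythagoras yields
\begin{equation*}
\norm{x-m}^{2} \;=\; \norm{x-\Pro_{N}x}^{2} + \norm{\Pro_{N}x - m}^{2}.
\end{equation*}
The first summand is independent of $m \in M$, so minimizing the left-hand side over $m \in M$ is equivalent to minimizing $\norm{\Pro_{N}x - m}^{2}$ over $m \in M$, whose unique minimizer is $\Pro_{M}(\Pro_{N}x)$. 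Hence $\Pro_{M}x = \Pro_{M}\Pro_{N}x$, as required.

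The only real subtlety—more bookkeeping than obstacle—is keeping the affine/linear distinction straight: the orthogonality $x - \Pro_{N}x \in N_{0}^{\perp}$ is stated for linear subspaces, and the transfer to the affine setting rests on fixing the base point $y_{0} \in N$ and translating both $x$ and $N$ by $-y_{0}$. Once that translation is made explicit, the Pythagoras step and the drop from $N_{0}^{\perp}$ to $M_{0}^{\perp}$ (via $M_{0} \subseteq N_{0}$) are routine, and the proof is complete.
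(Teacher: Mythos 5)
Your proof is correct. Note that the paper itself offers no proof of this statement: it is imported verbatim as \cref{fact:ExchangeProj} from \cite[Lemma~2.3]{BOyW2020BAM}, so there is nothing internal to compare against. Your argument — reducing by symmetry to $M\subseteq N$, observing that $\Pro_{N}\Pro_{M}=\Pro_{M}$ because $\Pro_{M}x\in M\subseteq N$ is fixed by $\Pro_{N}$, and then deriving $\Pro_{M}\Pro_{N}=\Pro_{M}$ from the orthogonality characterization $x-\Pro_{N}x\in N_{0}^{\perp}\subseteq M_{0}^{\perp}$ together with the Pythagoras identity $\norm{x-m}^{2}=\norm{x-\Pro_{N}x}^{2}+\norm{\Pro_{N}x-m}^{2}$ for $m\in M$ — is the standard self-contained proof, and you handle the affine-versus-linear translation correctly.
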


Recall that $u_{1}$ and $u_{2}$ are in $\mathcal{H}$ and $\eta_{1}$ and $\eta_{2}$ are in  $\mathbb{R}$, and that 
\begin{subequations}
	\begin{align}
	&W_{1} := \{x \in \mathcal{H} ~:~ \innp{x,u_{1}} \leq \eta_{1} \},   \quad 	W_{2} := \{x \in \mathcal{H} ~:~ \innp{x,u_{2}} \leq \eta_{2} \}, \label{eq:W1W2}\\
&H_{1} := \{x \in \mathcal{H} ~:~ \innp{x,u_{1}}= \eta_{1} \} , \quad   H_{2} := \{x \in \mathcal{H} ~:~ \innp{x,u_{2}} = \eta_{2} \}.	\label{eq:H1H2}
	\end{align}
\end{subequations}
The following result will be used frequently later. 
\begin{lemma} \label{cor:PH2PH1}
	Suppose that  $u_{1} \neq 0$ and $u_{2} \neq 0$. Let $x \in \mathcal{H}$. Then the following hold:
	\begin{enumerate}
		\item \label{cor:PH2PH1:H2H1} We have the identities:
		\begin{subequations}
			\begin{align*}
			\Pro_{H_{2}}\Pro_{H_{1}}x &= x +\frac{\eta_{1} -\innp{x,u_{1}}}{\norm{u_{1}}^{2}} u_{1} +\frac{\eta_{2} -\innp{x,u_{2}}}{\norm{u_{2}}^{2}} u_{2} -\frac{ (\eta_{1} -\innp{x,u_{1}}) \innp{u_{1},u_{2}}}{ \norm{u_{1}}^{2} \norm{u_{2}}^{2}} u_{2}   \\
			&=x +\frac{\eta_{1} -\innp{x,u_{1}}}{\norm{u_{1}}^{2}} u_{1} + \frac{1}{ \norm{u_{1}}^{2}  \norm{u_{2}}^{2} } \left( (\eta_{2} -\innp{x,u_{2}}) \norm{u_{1}}^{2} - (\eta_{1} -\innp{x,u_{1}}) \innp{u_{1},u_{2}}  \right)u_{2}.  
			\end{align*}
		\end{subequations}
	\item \label{cor:PH2PH1:notin:EQ} $\Pro_{H_{1}}x \notin W_{2} \Leftrightarrow \norm{u_{1}}^{2} (\innp{x,u_{2}} - \eta_{2}) > \innp{u_{1}, u_{2}} (\innp{x,u_{1}} -\eta_{1}) $. Moreover, $\Pro_{H_{2}}x \notin W_{1} \Leftrightarrow \norm{u_{2}}^{2} (\innp{x,u_{1}} - \eta_{1}) > \innp{u_{1}, u_{2}} (\innp{x,u_{2}} -\eta_{2}) $.
		\item  \label{cor:PH2PH1:H2H1:=0} Suppose that $\innp{u_{1},u_{2}}=0$. Then $(\forall x \in \mathcal{H})$	$\Pro_{H_{2}}\Pro_{H_{1}}x  = x +\frac{\eta_{1} -\innp{x,u_{1}}}{\norm{u_{1}}^{2}} u_{1} +\frac{\eta_{2} -\innp{x,u_{2}}}{\norm{u_{2}}^{2}} u_{2} $. Moreover, $ \Pro_{H_{2}}\Pro_{H_{1}}  =\Pro_{ H_{1} \cap H_{2}}$.
		\item \label{cor:PH2PH1:H2} Suppose that $u_{1}$ and $u_{2}$ are linearly dependent. Then $\Pro_{H_{2}}\Pro_{H_{1}}=\Pro_{H_{2}} $.
		\item \label{cor:PH2PH1:inW1} Suppose that $\innp{u_{1},u_{2}} >0$ and that $\Pro_{H_{1}}x \notin W_{2}$. Then $\Pro_{W_{2}}\Pro_{H_{1}}x=\Pro_{H_{2}}\Pro_{H_{1}}x \in \inte W_{1} \cap H_{2} $.
	\end{enumerate}
	
\end{lemma}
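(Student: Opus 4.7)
The plan breaks into three steps corresponding to the three assertions: first establish $\Pro_{W_{2}}\Pro_{H_{1}}x=\Pro_{H_{2}}\Pro_{H_{1}}x$, then $\Pro_{H_{2}}\Pro_{H_{1}}x\in H_{2}$ (immediate from the definition of $\Pro_{H_{2}}$), and finally the nontrivial inclusion $\Pro_{H_{2}}\Pro_{H_{1}}x\in\inte W_{1}$.

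For the first identity, note that the hypothesis $\Pro_{H_{1}}x\notin W_{2}$ places $\Pro_{H_{1}}x$ in $W_{2}^{c}$, so \cref{remark:FactWFactH} applied with $u=u_{2}$, $W=W_{2}$, $H=H_{2}$ gives $\Pro_{W_{2}}(\Pro_{H_{1}}x)=\Pro_{H_{2}}(\Pro_{H_{1}}x)$. Membership of $\Pro_{H_{2}}\Pro_{H_{1}}x$ in $H_{2}$ is clear since $\Pro_{H_{2}}$ maps into $H_{2}$.

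The main step is showing $\innp{\Pro_{H_{2}}\Pro_{H_{1}}x,u_{1}}<\eta_{1}$. The plan is to take the inner product of the explicit formula from \cref{cor:PH2PH1:H2H1} with $u_{1}$. After cancellation this yields
\begin{align*}
\innp{\Pro_{H_{2}}\Pro_{H_{1}}x,u_{1}}
=\eta_{1}+\frac{\innp{u_{1},u_{2}}}{\norm{u_{1}}^{2}\norm{u_{2}}^{2}}\left[\norm{u_{1}}^{2}(\eta_{2}-\innp{x,u_{2}})-\innp{u_{1},u_{2}}(\eta_{1}-\innp{x,u_{1}})\right].
\end{align*}
Hence the desired strict inequality is equivalent to the bracketed quantity being strictly negative (here we use $\innp{u_{1},u_{2}}>0$, which also forces $u_{1}\neq 0$ and $u_{2}\neq 0$, so the prefactor is strictly positive). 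Rearranging, this is exactly
\[
\norm{u_{1}}^{2}(\innp{x,u_{2}}-\eta_{2})>\innp{u_{1},u_{2}}(\innp{x,u_{1}}-\eta_{1}),
\]
which by \cref{cor:PH2PH1:notin:EQ} is precisely the hypothesis $\Pro_{H_{1}}x\notin W_{2}$. Therefore $\Pro_{H_{2}}\Pro_{H_{1}}x\in\inte W_{1}\cap H_{2}$.

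The only real obstacle is arithmetic: expanding the formula from \cref{cor:PH2PH1:H2H1}, checking that the $\innp{x,u_{1}}$ terms cancel cleanly, and keeping the signs straight so that the hypothesis $\innp{u_{1},u_{2}}>0$ (not merely $\ne 0$) converts the equivalent inequality from \cref{cor:PH2PH1:notin:EQ} into the required strict inequality $\innp{\Pro_{H_{2}}\Pro_{H_{1}}x,u_{1}}<\eta_{1}$ rather than the reverse one. No additional machinery is needed beyond parts (i) and (ii) already established in this very lemma.
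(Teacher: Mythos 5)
Your argument for item \cref{cor:PH2PH1:inW1} is correct and is essentially the paper's own proof: both pass from $\Pro_{W_2}$ to $\Pro_{H_2}$ via \cref{remark:FactWFactH}, then take the inner product of the formula in \cref{cor:PH2PH1:H2H1} with $u_1$ and use \cref{cor:PH2PH1:notin:EQ} together with $\innp{u_1,u_2}>0$ to get the strict inequality $\innp{\Pro_{H_2}\Pro_{H_1}x,u_1}<\eta_1$. Note only that your proposal addresses item \cref{cor:PH2PH1:inW1} and not the remaining items of the lemma.
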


\begin{proof}
	\cref{cor:PH2PH1:H2H1}:  Apply \cref{fact:Projec:Hyperplane} two times to obtain that
	\begin{align*}
	\Pro_{H_{2}}\Pro_{H_{1}}x & =\Pro_{H_{1}}x + \frac{\eta_{2} - \innp{\Pro_{H_{1}}x,u_{2}}}{\norm{u_{2}}^{2}}u_{2} \\
	& = x +\frac{\eta_{1} -\innp{x,u_{1}}}{\norm{u_{1}}^{2}} u_{1} + \frac{1}{\norm{u_{2}}^{2}} \left( \eta_{2}  -\Innp{x +\frac{\eta_{1} -\innp{x,u_{1}}}{\norm{u_{1}}^{2}} u_{1}, u_{2}}   \right) u_{2}\\
	&= x +\frac{\eta_{1} -\innp{x,u_{1}}}{\norm{u_{1}}^{2}} u_{1} +\frac{\eta_{2} -\innp{x,u_{2}}}{\norm{u_{2}}^{2}} u_{2} -\frac{ (\eta_{1} -\innp{x,u_{1}}) \innp{u_{1},u_{2}}}{ \norm{u_{1}}^{2}  \norm{u_{2}}^{2}  } u_{2}  \\
	&=x +\frac{\eta_{1} -\innp{x,u_{1}}}{\norm{u_{1}}^{2}} u_{1} + \frac{1}{ \norm{u_{1}}^{2}  \norm{u_{2}}^{2}  } \left( (\eta_{2} -\innp{x,u_{2}}) \norm{u_{1}}^{2} - (\eta_{1} -\innp{x,u_{1}}) \innp{u_{1},u_{2}}  \right)u_{2}.
	\end{align*}
	
	\cref{cor:PH2PH1:notin:EQ}: These equivalences are clear from \cref{fact:Projec:Hyperplane} and \cref{eq:W1W2}. 
	
	\cref{cor:PH2PH1:H2H1:=0}: Let $x \in \mathcal{H}$. Using \cref{cor:PH2PH1:H2H1} and  $ \innp{u_{1},u_{2}}=0$, we get   $\Pro_{H_{2}}\Pro_{H_{1}}x  = x +\frac{\eta_{1} -\innp{x,u_{1}}}{\norm{u_{1}}^{2}} u_{1} +\frac{\eta_{2} -\innp{x,u_{2}}}{\norm{u_{2}}^{2}} u_{2} $,  
 $ \innp{\Pro_{H_{2}} \Pro_{H_{1}} x, u_{1}} =\eta_{1}$ and $ \innp{\Pro_{H_{2}} \Pro_{H_{1}} x, u_{2}} =\eta_{2}$, which, by \cref{eq:H1H2}, imply that $\Pro_{H_{2}} \Pro_{H_{1}} x \in H_{1} \cap H_{2}$.  So,   \cref{fact:ExchangeProj} yields $ \Pro_{H_{2}} \Pro_{H_{1}} x=\Pro_{ H_{1} \cap H_{2}}\Pro_{H_{2}} \Pro_{H_{1}} x=\Pro_{ H_{1} \cap H_{2}}x$.
	
	\cref{cor:PH2PH1:H2}:  Recall that  $u_{1}$ and $u_{2}$ are linearly dependent and $u_{1} \neq 0$. Then  \cref{lem:basic:u1u2:LinerDep}\cref{item:lem:basic:u1u2:LinerDep:LD:u1neq0} leads to 
	\begin{align*}
	\innp{u_{1},u_{2}} u_{2} =\pm \norm{u_{1}}\norm{u_{2}} \left( \pm \frac{\norm{u_{2}}}{\norm{u_{1}}} u_{1} \right) =\norm{u_{2}}^{2} u_{1},
	\end{align*}
	which implies that
	\begin{align}   \label{eq:cor:PH2PH1:u2:u1}
	(\forall x\in \mathcal{H})	\quad \frac{ (\eta_{1} -\innp{x,u_{1}}) \innp{u_{1},u_{2}}}{ \norm{u_{1}}^{2} \norm{u_{2}}^{2}} u_{2} = \frac{\eta_{1} -\innp{x,u_{1}}}{\norm{u_{1}}^{2}} u_{1}.
	\end{align}
	Combining \cref{eq:cor:PH2PH1:u2:u1} with the first identity in \cref{cor:PH2PH1:H2H1} and using \cref{fact:Projec:Hyperplane},  we obtain \cref{cor:PH2PH1:H2}.
	
	\cref{cor:PH2PH1:inW1}: It is easy to see $\Pro_{W_{2}}\Pro_{H_{1}}x=\Pro_{H_{2}}\Pro_{H_{1}}x$ from $\Pro_{H_{1}}x \notin W_{2}$ and \cref{remark:FactWFactH}.  
Moreover, using  $\innp{u_{1},u_{2}} >0$ and $\Pro_{H_{1}}x \notin W_{2}$, we obtain that
\begin{align*}
\eta_{1} -\innp{\Pro_{H_{2}}\Pro_{H_{1}}x ,u_{1}} \stackrel{\cref{cor:PH2PH1:H2H1}}{=}-\frac{1}{ \norm{u_{1}}^{2}  \norm{u_{2}}^{2}  } \left( (\eta_{2} -\innp{x,u_{2}}) \norm{u_{1}}^{2} - (\eta_{1} -\innp{x,u_{1}}) \innp{u_{1},u_{2}}  \right)\innp{u_{2},u_{1}} \stackrel{   \cref{cor:PH2PH1:notin:EQ}}{>}0,
\end{align*}
which, by \cref{eq:W1W2}, implies that $\Pro_{H_{2}}\Pro_{H_{1}}x  \in \inte   W_{1}$. Consequently,  $\Pro_{H_{2}}\Pro_{H_{1}} x \in \inte  W_{1} \cap H_{2}$. 
\end{proof}

\begin{lemma} \label{lemma:x:H1:W2:Px}
		Suppose that  $u_{1} \neq 0$ and $u_{2} \neq 0$ and that $\innp{u_{1},u_{2}} \neq 0$. Let $x \in H_{1} \cap W^{c}_{2}$.  Then the following hold:
		\begin{enumerate}
			\item \label{lemma:x:H1:W2:Px:H1} $\Pro_{W_{2}}x=\Pro_{H_{2}}x \notin H_{1}$
			\item  \label{lemma:x:H1:W2:Px:W2} $\Pro_{H_{1}}\Pro_{W_{2}}x=\Pro_{H_{1}}\Pro_{H_{2}}x \notin W_{2}$.
			\item \label{lemma:x:H1:W2:Px:k} $(\forall k \in \mathbb{N} \smallsetminus \{0\})$	$(\Pro_{W_{2}}\Pro_{H_{1}})^{k}x =	(\Pro_{H_{2}}\Pro_{H_{1}})^{k}x \notin H_{1}$ and $ \Pro_{H_{1}}(\Pro_{W_{2}}\Pro_{H_{1}})^{k}x =\Pro_{H_{1}}(\Pro_{H_{2}}\Pro_{H_{1}})^{k}x\notin W_{2}$.
		\end{enumerate}
\end{lemma}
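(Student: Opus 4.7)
For \cref{lemma:x:H1:W2:Px:H1}, since $x \in W_{2}^{c}$, \cref{remark:FactWFactH} (applied to $W_{2}$ and $H_{2}$) gives $\Pro_{W_{2}}x = \Pro_{H_{2}}x = x + \frac{\eta_{2} - \innp{x,u_{2}}}{\norm{u_{2}}^{2}}u_{2}$. I would then compute
\begin{equation*}
\innp{\Pro_{H_{2}}x, u_{1}} - \eta_{1} = (\innp{x,u_{1}} - \eta_{1}) + \frac{\eta_{2} - \innp{x,u_{2}}}{\norm{u_{2}}^{2}}\innp{u_{1},u_{2}} = \frac{\eta_{2} - \innp{x,u_{2}}}{\norm{u_{2}}^{2}}\innp{u_{1},u_{2}},
\end{equation*}
using $x \in H_{1}$. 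Since $x \in W_{2}^{c}$ forces $\eta_{2} - \innp{x,u_{2}} < 0$ and $\innp{u_{1},u_{2}} \neq 0$ by hypothesis, this quantity is nonzero, so $\Pro_{H_{2}}x \notin H_{1}$.

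For \cref{lemma:x:H1:W2:Px:W2}, the first equality is immediate from \cref{lemma:x:H1:W2:Px:H1}. For the second, I would apply \cref{fact:Projec:Hyperplane} to $\Pro_{H_{1}}\Pro_{H_{2}}x$ and compute $\innp{\Pro_{H_{1}}\Pro_{H_{2}}x,u_{2}} - \eta_{2}$. Using the identity derived above for $\innp{\Pro_{H_{2}}x,u_{1}} - \eta_{1}$, the computation gives
\begin{equation*}
\innp{\Pro_{H_{1}}\Pro_{H_{2}}x, u_{2}} - \eta_{2} = -\frac{(\eta_{2} - \innp{x,u_{2}})\innp{u_{1},u_{2}}^{2}}{\norm{u_{1}}^{2}\norm{u_{2}}^{2}} > 0,
\end{equation*}
where positivity follows again from $\eta_{2} - \innp{x,u_{2}} < 0$ and $\innp{u_{1},u_{2}} \neq 0$. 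Hence $\Pro_{H_{1}}\Pro_{H_{2}}x \notin W_{2}$.

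For \cref{lemma:x:H1:W2:Px:k}, I would proceed by induction on $k$. The base case $k=1$ reduces to \cref{lemma:x:H1:W2:Px:H1,lemma:x:H1:W2:Px:W2} after observing $\Pro_{H_{1}}x = x$ (since $x \in H_{1}$), which gives $(\Pro_{W_{2}}\Pro_{H_{1}})x = \Pro_{W_{2}}x$ and $(\Pro_{H_{2}}\Pro_{H_{1}})x = \Pro_{H_{2}}x$. For the inductive step $k \to k+1$, set $y_{k} := \Pro_{H_{1}}(\Pro_{W_{2}}\Pro_{H_{1}})^{k}x = \Pro_{H_{1}}(\Pro_{H_{2}}\Pro_{H_{1}})^{k}x$; by the induction hypothesis, $y_{k} \in H_{1} \cap W_{2}^{c}$, so applying \cref{lemma:x:H1:W2:Px:H1,lemma:x:H1:W2:Px:W2} to $y_{k}$ in place of $x$ yields $\Pro_{W_{2}}y_{k} = \Pro_{H_{2}}y_{k} \notin H_{1}$ and $\Pro_{H_{1}}\Pro_{W_{2}}y_{k} = \Pro_{H_{1}}\Pro_{H_{2}}y_{k} \notin W_{2}$, which are exactly the two required statements for level $k+1$ after re-expressing $\Pro_{W_{2}}\Pro_{H_{1}}y_{k} = (\Pro_{W_{2}}\Pro_{H_{1}})^{k+1}x$ and similarly for $\Pro_{H_{2}}\Pro_{H_{1}}$.

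The only subtle point is recognizing that the hypothesis of \cref{lemma:x:H1:W2:Px:H1,lemma:x:H1:W2:Px:W2}, namely membership in $H_{1} \cap W_{2}^{c}$, is exactly reproduced by $y_{k}$ at each stage, so the induction feeds itself without additional computation; otherwise the proof is a routine application of the explicit projection formulas together with \cref{remark:FactWFactH}.
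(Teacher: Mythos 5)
Your proposal is correct and follows essentially the same route as the paper: the same explicit computations of $\innp{\Pro_{H_{2}}x,u_{1}}-\eta_{1}$ and $\innp{\Pro_{H_{1}}\Pro_{H_{2}}x,u_{2}}-\eta_{2}$ (the paper obtains the latter by citing \cref{cor:PH2PH1}\cref{cor:PH2PH1:H2H1} with $H_{1}$ and $H_{2}$ swapped, which amounts to your direct calculation), and the same induction on $k$ with $y_{k}:=\Pro_{H_{1}}(\Pro_{W_{2}}\Pro_{H_{1}})^{k}x\in H_{1}\cap W_{2}^{c}$ feeding items (i) and (ii) back into the inductive step.
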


\begin{proof}
	\cref{lemma:x:H1:W2:Px:H1}: As a consequence of $x \in   W^{c}_{2}$ and \cref{remark:FactWFactH}, we have $\Pro_{W_{2}}x=\Pro_{H_{2}}x$. Combine the assumptions with \cref{fact:Projec:Hyperplane} and \cref{eq:H1H2} to see that
	\begin{align*}
	\eta_{1} - \innp{ \Pro_{H_{2}}x , u_{1}} = \eta_{1}  -\Innp{x +\frac{\eta_{2} -\innp{x,u_{2}}}{\norm{u_{2}}^{2}} u_{2}, u_{1}} 
	=-\frac{\eta_{2} -\innp{x,u_{2}}}{\norm{u_{2}}^{2}} \innp{u_{2}, u_{1}} \neq 0,
	\end{align*}
	which, by \cref{eq:H1H2}, implies that $\Pro_{H_{2}}x \notin H_{1}$. 
	
	 \cref{lemma:x:H1:W2:Px:W2}: Clearly, $\Pro_{H_{1}}\Pro_{W_{2}}x=\Pro_{H_{1}}\Pro_{H_{2}}x$ is from \cref{lemma:x:H1:W2:Px:H1}.  Apply \cref{cor:PH2PH1}\cref{cor:PH2PH1:H2H1} with swapping $H_{1}$ and $H_{2}$ to obtain that 
	 \begin{align*}
	 \eta_{2} - \innp{ \Pro_{H_{1}}\Pro_{H_{2}}x ,u_{2} } & = -\frac{1}{ \norm{u_{1}}^{2}  \norm{u_{2}}^{2} } \left( (\eta_{1} -\innp{x,u_{1}}) \norm{u_{2}}^{2} - (\eta_{2} -\innp{x,u_{2}}) \innp{u_{1},u_{2}}  \right) \innp{u_{1}, u_{2} }\\
	  & = \frac{1}{ \norm{u_{1}}^{2}  \norm{u_{2}}^{2} }  (\eta_{2} -\innp{x,u_{2}}) \innp{u_{1},u_{2}}^{2} <0,  \quad (\text{by } x  \in H_{1} \cap W^{c}_{2} \text{ and }\innp{u_{1},u_{2}} \neq 0)
	 \end{align*}
	 which, by \cref{eq:W1W2}, implies that $\Pro_{H_{1}}\Pro_{H_{2}}x \notin W_{2}$.
	 
	 \cref{lemma:x:H1:W2:Px:k}:	We proceed the proof by induction on $k$.  For $k=1$, bearing in mind that $x \in H_{1} \cap W^{c}_{2}$,   \cref{remark:FactWFactH} and \cref{lemma:x:H1:W2:Px:H1}, we have that $\Pro_{W_{2}}\Pro_{H_{1}} x =	\Pro_{H_{2}} x \notin H_{1}$. Combine this with \cref{lemma:x:H1:W2:Px:W2} to see that  $ \Pro_{H_{1}}\Pro_{W_{2}}\Pro_{H_{1}} x= \Pro_{H_{1}}\Pro_{H_{2}}x \notin W_{2}$. Hence, the base case is true. Suppose that for some $ k \in \mathbb{N} \smallsetminus \{0\}$,	
	 \begin{align} \label{eq:lemma:x:H1:W2:Px:k:IH}
 (\Pro_{W_{2}}\Pro_{H_{1}})^{k}x =	(\Pro_{H_{2}}\Pro_{H_{1}})^{k}x \notin H_{1} \quad \text{and} \quad   \Pro_{H_{1}}(\Pro_{W_{2}}\Pro_{H_{1}})^{k}x \notin W_{2}.
	 \end{align}
	  Denote by $y:=  \Pro_{H_{1}}(\Pro_{W_{2}}\Pro_{H_{1}})^{k}x$. Then $ y \in  H_{1} \cap W^{c}_{2}$ follows from the induction hypothesis and \cref{eq:lemma:x:H1:W2:Px:k:IH}. Moreover, 
	  \begin{subequations}
	  	\begin{align}
	  	& (\Pro_{W_{2}}\Pro_{H_{1}})^{k+1}x =\Pro_{W_{2}}\Pro_{H_{1}} (\Pro_{W_{2}}\Pro_{H_{1}})^{k}x 
	  	=\Pro_{W_{2}}y \stackrel{\text{\cref{lemma:x:H1:W2:Px:H1} }}{=}\Pro_{H_{2}}y =(\Pro_{H_{2}}\Pro_{H_{1}})^{k+1}x,  \quad \Pro_{W_{2}}y 
	  	\stackrel{\text{\cref{lemma:x:H1:W2:Px:H1} }}{\notin} H_{1}, \label{lemma:x:H1:W2:Px:k:H1}\\
	  	&\Pro_{H_{1}}(\Pro_{W_{2}}\Pro_{H_{1}})^{k+1}x \stackrel{\cref{lemma:x:H1:W2:Px:k:H1}}{=} \Pro_{H_{1}}\Pro_{H_{2}}y \stackrel{\text{\cref{lemma:x:H1:W2:Px:W2}}}{\notin} W_{2}. \label{lemma:x:H1:W2:Px:k:W2}
	  	\end{align}
	  \end{subequations}
Hence,  \cref{lemma:x:H1:W2:Px:k:H1} and \cref{lemma:x:H1:W2:Px:k:W2} yield that \cref{eq:lemma:x:H1:W2:Px:k:IH} holds for $k+1$. Therefore, \cref{lemma:x:H1:W2:Px:k} holds by induction.  
\end{proof}

\section{Projection onto intersection of hyperplanes} \label{sec:hyperplane}
In this section, we consider  the projection onto intersection of hyperplanes and  the composition of projections onto hyperplanes.

Recall that $\I:=\{1,2,\ldots, m\}$ and that for every $i \in \I$,  $u_{i}$ is in $\mathcal{H}$, and $\eta_{i}$ is in $\mathbb{R}$. Moreover, 
\begin{empheq}[box=\mybluebox]{equation*}
(\forall i \in \I) \quad H_{i} := \{x \in \mathcal{H} ~:~ \innp{x,u_{i}}= \eta_{i} \}.
\end{empheq}

\begin{remark} \label{rem:J:I}
	Suppose that $m\geq 2$ and that $u_{1}, \ldots, u_{m}$ are linearly dependent. Then without loss of generality, assume that $u_{1}, \ldots, u_{t}$ with $t \in \I \smallsetminus \{m\}$ are linearly independent and that $(\forall i \in  \{t+1, \ldots, m\})$ $u_{1}, \ldots, u_{t},u_{i}$ are linearly dependent.  
	Let $  i \in  \{t+1, \ldots, m\}$. Assume that  $u_{i} =\sum^{t}_{j=1} \gamma_{j} u_{j}$ for some $(\gamma_{1}, \ldots, \gamma_{t})^{\intercal} \in \mathbb{R}^{t} $. Then if $\eta_{i} =\sum^{t}_{j=1} \gamma_{j} \eta_{j}$, then $\cap^{t}_{j=1} H_{j} = (\cap^{t}_{j=1} H_{j}) \cap H_{i}$. Otherwise,  $ (\cap^{t}_{j=1} H_{j}) \cap H_{i} =\varnothing$. 
	Set $\J$ as the maximally subset of $\I$ such that  $u_{i}$, for all $i \in \J$  are linearly independent. Let $x\in \mathcal{H}$.
	Therefore, if only $\cap_{i \in \I} H_{i} \neq \varnothing$, to deduce  $\Pro_{\cap_{i \in \I} H_{i}} x$, we only need to first find $\J$, then  $\Pro_{\cap_{i \in \I} H_{i}} x=\Pro_{\cap_{i \in \J} H_{i}} x$. Therefore, in the following \cref{prop:proj:inters:hyperplanes}, we care only the case in which $u_{1}, \ldots, u_{m}$ are linearly independent.
\end{remark}

\begin{proposition} \label{prop:proj:inters:hyperplanes}
	Suppose that $u_{1}, \ldots, u_{m}$ are linearly independent. Denote by 
	\begin{align*}
	(\beta_{1}, \ldots, \beta_{m})^{\intercal} := G(u_{1}, \ldots, u_{m})^{-1} \left(\innp{u_{1}, x}-\eta_{1}, \ldots, \innp{u_{m},x}-\eta_{m} \right)^{\intercal},
	\end{align*} 
	where $G(u_{1}, \ldots, u_{m})$ is the Gram matrix defined in \cref{fact:Gram:inver}. 
	Then 
	\begin{align*}
	(\forall x \in \mathcal{H}) \quad \Pro_{\cap_{i \in \I} H_{i}}x= x-\sum_{ i \in \I} \beta_{i}u_{i}.
	\end{align*}
\end{proposition}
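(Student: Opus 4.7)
The plan is to use the standard characterization of the projection onto a closed affine subspace together with the fact that the parallel subspace of $\bigcap_{i\in\I} H_i$ is $\spn\{u_1,\ldots,u_m\}^{\perp}$, turning the projection identity into a linear system governed by the Gram matrix.

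First, I would note that \cref{lem:capMi} (applied with $a_i = u_i$ and $\xi_i = \eta_i$) guarantees $\bigcap_{i\in\I} H_i \neq \varnothing$, so $M := \bigcap_{i\in\I} H_i$ is a nonempty closed affine subspace, and $\Pro_M x$ is well-defined. Next, since each $H_i$ is parallel to $\ker u_i$, the affine subspace $M$ is parallel to the closed linear subspace
\[
L \;:=\; \bigcap_{i\in\I} \ker u_i \;=\; \spn\{u_1,\ldots,u_m\}^{\perp},
\]
where the second equality uses that $u_1,\ldots,u_m$ span a finite-dimensional (hence closed) subspace. Thus the standard projection characterization (see \cite[Corollary~3.22(iii)]{BC2017}) asserts that $y = \Pro_M x$ if and only if $y \in M$ and $x - y \in L^{\perp} = \spn\{u_1,\ldots,u_m\}$.

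Then I would write the ansatz $y = x - \sum_{i\in\I}\beta_i u_i$ for scalars $\beta_i \in \mathbb{R}$, automatically placing $x - y$ in $\spn\{u_1,\ldots,u_m\}$. Imposing $y \in H_j$ for each $j\in\I$ gives $\innp{y,u_j} = \eta_j$, which rearranges to
\[
\sum_{i\in\I} \beta_i \innp{u_i,u_j} \;=\; \innp{x,u_j} - \eta_j \qquad (\forall j \in \I).
\]
This is exactly the linear system
\[
G(u_1,\ldots,u_m) \begin{pmatrix}\beta_1\\ \vdots \\ \beta_m\end{pmatrix}
= \begin{pmatrix}\innp{u_1,x}-\eta_1\\ \vdots \\ \innp{u_m,x}-\eta_m\end{pmatrix},
\]
with $G(u_1,\ldots,u_m)$ as in \cref{eq:GramMatrix}. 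By \cref{fact:Gram:inver}, the linear independence of $u_1,\ldots,u_m$ makes $G(u_1,\ldots,u_m)$ invertible, so the $\beta_i$ are uniquely determined by the formula in the statement.

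Finally, combining the two conditions (the ansatz and the uniquely determined coefficients), the unique $y$ satisfying both $y \in M$ and $x - y \in \spn\{u_1,\ldots,u_m\}$ is precisely $y = x - \sum_{i\in\I}\beta_i u_i$, yielding $\Pro_{\bigcap_{i\in\I} H_i}x = x - \sum_{i\in\I}\beta_i u_i$ as claimed. I do not anticipate a serious obstacle: the whole argument is essentially assembling \cref{lem:capMi} (existence), the parallel-subspace identification, the projection characterization, and \cref{fact:Gram:inver} (uniqueness of the $\beta_i$); the only point to be careful about is noting that $\spn\{u_1,\ldots,u_m\}$ is automatically closed since it is finite-dimensional, so $L^{\perp\perp} = L$ and $(\spn\{u_1,\ldots,u_m\})^{\perp\perp} = \spn\{u_1,\ldots,u_m\}$.
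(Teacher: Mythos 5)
Your proposal is correct, and it reaches the conclusion by a genuinely different route than the paper. The paper's proof is operator-theoretic: it introduces $L\colon\mathcal{H}\to\mathbb{R}^{m}$, $Lx=(\innp{u_{1},x},\ldots,\innp{u_{m},x})^{\intercal}$, identifies $L^{*}\alpha=\sum_{i}\alpha_{i}u_{i}$ and $LL^{*}=G(u_{1},\ldots,u_{m})$, and then invokes the ready-made formula $\Pro_{L^{-1}(\eta)}x=x-L^{*}(LL^{*})^{-1}(Lx-\eta)$ from \cite[Example~29.17(iii)]{BC2017}. You instead derive the result from first principles: you use the orthogonality characterization of the projection onto a closed affine subspace, identify the parallel subspace of $\cap_{i\in\I}H_{i}$ as $\bigl(\spn\{u_{1},\ldots,u_{m}\}\bigr)^{\perp}$, and solve the resulting Gram system. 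Both arguments are sound, and they hinge on the same two facts (invertibility of the Gram matrix via \cref{fact:Gram:inver}, and the normal equations $\sum_{i}\beta_{i}\innp{u_{i},u_{j}}=\innp{x,u_{j}}-\eta_{j}$). The paper's version is shorter given the cited formula and exposes the structure $L^{*}(LL^{*})^{-1}$, which generalizes to other surjective operators with closed range; your version is self-contained, makes the nonemptiness of the intersection explicit through \cref{lem:capMi} (a point the paper's proof leaves implicit in the hypotheses of the cited example), and reuses essentially the same linear system already solved in \cref{lem:capMi}. Your one point of care --- that $\spn\{u_{1},\ldots,u_{m}\}$ is closed because it is finite-dimensional, so the double orthogonal complement returns it --- is exactly the right thing to flag.
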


\begin{proof}
	Define $L:\mathcal{H} \to \mathbb{R}^{m}$ by 
	\begin{align*}
	(\forall x \in \mathcal{H}) \quad Lx:=\left(\innp{u_{1}, x}, \ldots, \innp{u_{m},x} \right)^{\intercal}.
	\end{align*}
	Clearly, $L\in \mathcal{B} ( \mathcal{H}, \mathbb{R}^{m})$. Then according to the definition of adjoint operator, $L^{*}: \mathbb{R}^{m} \to \mathcal{H}$ is defined by 
	\begin{align*}
	\left(\forall \alpha:=(\alpha_{1}, \ldots, \alpha_{m} )^{\intercal} \in \mathbb{R}^{m} \right) \quad L^{*}\alpha=\sum_{ i \in \I} \alpha_{i} u_{i}.
	\end{align*}
	It is easy to see that $LL^{*} : \mathbb{R}^{m} \to \mathbb{R}^{m}$ satisfies that 
	\begin{align*}
	(\forall \alpha \in \mathbb{R}^{m} ) \quad LL^{*}\alpha =\left(\Innp{u_{1}, \sum_{i \in \I} \alpha_{i}u_{i}}, \ldots, \Innp{u_{m},\sum_{i \in \I} \alpha_{i}u_{i} }  \right)^{\intercal}=G(u_{1}, \ldots, u_{m})\alpha. 
	\end{align*}
	Combine this with  \cref{fact:Gram:inver} and the linear independence of $u_{1}, \ldots, u_{m}$ to see  that $ LL^{*} $  is invertible, which, connecting with  \cite[Example~29.17(iii)]{BC2017}, implies that 
	\begin{align*}
	(\forall x \in \mathcal{H}) \quad \Pro_{\cap_{i \in \I} H_{i}}x=x-L^{*}(LL^{*})^{-1}(Lx-\eta) =x-\sum_{ i \in \I} \beta_{i}u_{i},
	\end{align*}
	where $\eta:=(\eta_{1}, \ldots, \eta_{m} )^{\intercal} $.
\end{proof}

\begin{remark}
	With \cref{rem:J:I} and \cref{prop:proj:inters:hyperplanes}, we are able to solve the least norm problem presented in \cite[Section~5.2]{BCS2019} without the requirement that the related matrix is full rank.
\end{remark}

\begin{lemma} \label{lem:H1H2Ineq}
	Suppose that $u_{1}$ and $u_{2}$ are linearly independent. Denote by $\gamma :=\frac{|\innp{u_{1}, u_{2}}| }{\norm{u_{1}} \norm{u_{2}}}$. Then $\gamma \in \left[0,1\right[\,$ and 
	\begin{align*}
	(\forall x \in \mathcal{H}) (\forall k \in \mathbb{N}) \quad \norm{ (\Pro_{H_{2}}  \Pro_{H_{1}})^{k} x-  \Pro_{H_{1} \cap H_{2}}x } \leq \gamma^{k} \norm{x -\Pro_{H_{1} \cap H_{2}}x }.
	\end{align*}
\end{lemma}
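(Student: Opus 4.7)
My plan is to show that $G := \Pro_{H_{2}}\Pro_{H_{1}}$ is a $\gamma$-BAM with $\Fix G = H_{1} \cap H_{2}$ and then invoke \cref{fact:BAM:Properties} to obtain the iterated inequality. By \cref{lem:basic:u1u2:LinerDep}\cref{item:lem:basic:u1u2:LinerDep:LID}, the linear independence of $u_{1}, u_{2}$ immediately gives $\gamma \in \left[0,1\right[\,$, while \cref{lem:capMi} applied with $m=2$ ensures $H_{1} \cap H_{2} \neq \varnothing$. The only real obstacle is the contraction estimate in \cref{def:BAM}\cref{def:BAM:Ineq}, which I will handle by a two-dimensional calculation in $\spn\{u_{1}, u_{2}\}$.

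First, I would reduce to the linear case by translation: pick $p_{0} \in H_{1} \cap H_{2}$ and replace $x$ by $x - p_{0}$, so that $\eta_{1} = \eta_{2} = 0$, $H_{i} = \ker u_{i}$, $H_{1} \cap H_{2} = \spn\{u_{1}, u_{2}\}^{\perp}$, and all the projections involved become linear. Decomposing any $x$ orthogonally as $x = x_{0} + z$ with $x_{0} \in H_{1} \cap H_{2}$ and $z \in S := \spn\{u_{1}, u_{2}\}$, linearity and the fact that $\Pro_{H_{1}}, \Pro_{H_{2}}$ both fix $x_{0}$ yield $\Pro_{H_{1} \cap H_{2}}x = x_{0}$ and $\Pro_{H_{2}}\Pro_{H_{1}}x - \Pro_{H_{1} \cap H_{2}}x = \Pro_{H_{2}}\Pro_{H_{1}}z$. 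Hence the contraction bound reduces to
\begin{align*}
\| \Pro_{H_{2}}\Pro_{H_{1}}z \| \leq \gamma \|z\| \quad \text{for every } z \in S.
\end{align*}

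For this estimate I exploit that $S$ is two-dimensional: set $e_{1} := u_{1}/\|u_{1}\|$ and choose a unit $e_{2} \in S$ orthogonal to $e_{1}$, so $u_{2} = \|u_{2}\|(\cos\theta\, e_{1} + \sin\theta\, e_{2})$ with $|\cos\theta| = \gamma$. For $z = a e_{1} + b e_{2}$, a direct computation gives $\Pro_{H_{1}}z = b e_{2}$ and then $\Pro_{H_{2}}\Pro_{H_{1}}z = b\cos\theta(-\sin\theta\, e_{1} + \cos\theta\, e_{2})$, whose norm is $|b|\gamma \leq \gamma\|z\|$, as required. With the bound established, the inclusion $H_{1} \cap H_{2} \subseteq \Fix G$ is clear, and conversely $Gx = x$ combined with the bound and $\gamma < 1$ forces $x = \Pro_{H_{1} \cap H_{2}}x \in H_{1} \cap H_{2}$, yielding \cref{def:BAM}\cref{def:BAM:Fix}. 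Since $Gx - \Pro_{H_{1} \cap H_{2}}x \in S$ is orthogonal to $H_{1} \cap H_{2}$, one also obtains $\Pro_{H_{1} \cap H_{2}}Gx = \Pro_{H_{1} \cap H_{2}}x$, which is \cref{def:BAM}\cref{def:BAM:eq}. Therefore $G$ is a $\gamma$-BAM, and \cref{fact:BAM:Properties} delivers the iterated inequality in the lemma.
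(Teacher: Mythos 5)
Your proof is correct, but it takes a genuinely different route from the paper: the paper disposes of this lemma in one line by citing Deutsch \cite[Example~9.40]{D2012}, which records precisely the linear rate of alternating projections onto two hyperplanes in terms of the angle between them, whereas you give a self-contained argument. Your reduction is sound at every step: translating by a point $p_{0}\in H_{1}\cap H_{2}$ (which exists by \cref{lem:capMi}) turns the $H_{i}$ into the kernels $\ker u_{i}$ and all projections into linear ones; the splitting $\mathcal{H}=S^{\perp}\oplus S$ with $S=\spn\{u_{1},u_{2}\}$ is legitimate because $S$ is finite-dimensional and hence closed, and $S^{\perp}=\ker u_{1}\cap\ker u_{2}$ is fixed by both projectors; and the two-dimensional computation $\Pro_{\ker u_{2}}\Pro_{\ker u_{1}}(ae_{1}+be_{2})=b\cos\theta(-\sin\theta\,e_{1}+\cos\theta\,e_{2})$ with $\abs{\cos\theta}=\gamma$ gives exactly the contraction $\norm{\Pro_{H_{2}}\Pro_{H_{1}}z}\leq\gamma\norm{z}$ on $S$. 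Your verification of the three BAM conditions and the appeal to \cref{fact:BAM:Properties} then close the argument. What your approach buys is self-containedness and slightly more information (an explicit proof that $\Pro_{H_{2}}\Pro_{H_{1}}$ is a $\gamma$-BAM, which the paper later needs in spirit in \cref{thm:PROW1W2:BAM} and \cref{theom:PW2PH1:BAM}); what the citation buys is brevity and immediate access to the general theory of angles between subspaces, which also covers the $m$-set case.
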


\begin{proof}
	It is easy to see the desired results from \cite[Example~9.40]{D2012}.
\end{proof}

\section{Compositions of projections onto halfspaces} \label{sec:Halfspaces}

In this section, we study the projection onto intersection of halfspaces and the composition of projections onto halfspaces.  

Recall that $\I:=\{1,2,\ldots, m\}$, that for every $i \in \I$, $u_{i}$ is in $\mathcal{H}$, and  $\eta_{i}$ is in $\mathbb{R}$, and that
\begin{empheq}[box=\mybluebox]{equation} \label{eq:W12}
(\forall i \in \I) \quad W_{i} := \{x \in \mathcal{H} ~:~ \innp{x,u_{i}} \leq  \eta_{i} \}, \quad \text{and} \quad H_{i} := \{x \in \mathcal{H} ~:~ \innp{x,u_{i}}= \eta_{i} \}.
\end{empheq}

For every $k \in \mathbb{N}$, let $[k]$ denote \enquote{$k \mod m$}. Let $x \in \mathcal{H}$. The sequence of iterations of Dykstra's algorithm is: $x_{0}:=x, e_{-(m-1)}=\cdots=e_{-1}=e_{0}=0$, 
\begin{align*}
(\forall k \in\mathbb{N} \smallsetminus \{0\}) \quad x_{k}:=\Pro_{W_{[k]}}(x_{k-1} +e_{k-m}), \text{ and } e_{k}:=x_{k-1} +e_{k-m}-x_{k}.
\end{align*}
\begin{fact} {\rm \cite[Example~9.41]{D2012}} \label{eq:fact:Dykstra:halfspaces}
	Suppose that $(\forall i \in \I)$ $u_{i} \neq 0$ and $\cap_{i \in \I} W_{i} \neq \varnothing$. Let $x \in \mathcal{H}$. Then the  sequence $(x_{k})_{k\in \mathbb{N}}$ according to the Dykstra's algorithm with $x_{0}=x$ converges to $ \Pro_{\cap_{i \in \I} W_{i}} x$.
\end{fact}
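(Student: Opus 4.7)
The plan is to deduce this as an immediate instance of the general Boyle--Dykstra convergence theorem for cyclic iterations on a finite family of closed convex sets whose intersection is nonempty. Each halfspace $W_i$ is automatically closed and convex, and the assumption $u_i \neq 0$ ensures it is a genuine proper halfspace; together with the hypothesis $\cap_{i \in \I} W_i \neq \varnothing$ this supplies exactly the data required by the general theorem, so no argument specific to halfspaces is needed.

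First I would verify that the recursion
\begin{align*}
x_k := \Pro_{W_{[k]}}(x_{k-1} + e_{k-m}), \qquad e_k := x_{k-1} + e_{k-m} - x_k,
\end{align*}
with initial conditions $e_{-(m-1)} = \cdots = e_0 = 0$, is a faithful rewriting of the classical Dykstra scheme. In that scheme one keeps, for each of the $m$ constraints, a separate running ``correction vector'' that is added back before the next projection onto that constraint; here this is encoded by the single sequence $(e_k)$ with the look-back offset $e_{k-m}$ selecting precisely the most recent correction attached to $W_{[k]}$. The cyclic index $[k] = k \bmod m$ ensures that the constraints are visited in the order $W_1, W_2, \ldots, W_m, W_1, \ldots$, matching the standard cyclic Dykstra formulation.

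Second, I would invoke the general Boyle--Dykstra theorem (see, e.g., \cite[Theorem~30.7]{BC2017} or the version in \cite{D2012} to which this fact is attributed), which asserts that for any nonempty closed convex sets $C_1, \ldots, C_m$ in a real Hilbert space with $\cap_{i=1}^m C_i \neq \varnothing$, the cyclic Dykstra iterates starting at $x_0 = x$ converge strongly to $\Pro_{\cap_{i=1}^m C_i} x$. Applied with $C_i := W_i$ for each $i \in \I$, this yields exactly the claim.

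The main obstacle is essentially clerical rather than mathematical: Dykstra's algorithm appears in the literature with several slightly different conventions for indexing, for storing the correction vectors, and for whether the residuals are reset each cycle or carried indefinitely. The real work is to match the form displayed in the statement to the form proved in the cited source. Once this translation is settled, convergence is a black-box consequence of the general theorem and no property peculiar to halfspaces enters the argument.
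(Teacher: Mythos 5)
Your proposal is correct and matches the paper's treatment: the paper states this as a fact cited from Deutsch's Example~9.41, which is itself just the Boyle--Dykstra theorem (Theorem~9.24 in the same reference, also invoked later in the paper) specialized to the closed convex sets $W_{i}$. Reducing to that general convergence theorem, after checking that the displayed recursion is the standard cyclic Dykstra scheme, is exactly the intended argument.
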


Unfortunately, as we mentioned before, the practical manipulation of the Dykstra's algorithm is not easy. In the remaining of this section, we consider the case in which $m=2$ and systematically investigate the relation between $\Pro_{W_{1} \cap W_{2}}$ and $\Pro_{W_{2}}\Pro_{W_{1} }$. 

\subsection*{$u_{1}$ and $u_{2}$ are linearly dependent }
In the whole subsection, we assume that 
\begin{empheq}[box=\mybluebox]{equation*}
u_{1} \text{ and } u_{2} \text{ are linearly dependent}.
\end{empheq}  
\begin{fact} {\rm \cite[Proposition~29.22]{BC2017} } \label{fact:W1capW2:u1u2LD}
Exactly one of the following cases occurs:
	\begin{enumerate}
		\item \label{item:fact:W1capW2:u1u2LD:i} $u_{1}=u_{2}=0$ and $0 \leq \min\{\eta_{1}, \eta_{2} \}$. Then $W_{1} \cap W_{2} =\mathcal{H}$ and $\Pro_{W_{1} \cap W_{2}} =\Id$.
		\item \label{item:fact:W1capW2:u1u2LD:ii} $u_{1}=u_{2}=0$ and $\min\{\eta_{1}, \eta_{2} \} <0$. Then $W_{1} \cap W_{2} =\varnothing$.
		\item \label{item:fact:W1capW2:u1u2LD:iii} $u_{1} \neq 0$, $u_{2}=0$, and $0 \leq \eta_{2}$. Then $W_{1} \cap W_{2} = W_{1} $ and $\Pro_{W_{1} \cap W_{2}}  =  \Pro_{W_{1}} $.
		\item \label{item:fact:W1capW2:u1u2LD:iv} $u_{1} \neq 0$, $u_{2}=0$, and  $\eta_{2} <0$.  Then $W_{1} \cap W_{2} = \varnothing$.
		\item \label{item:fact:W1capW2:u1u2LD:v} $u_{1} = 0$, $u_{2} \neq0$, and  $0 \leq \eta_{1}$.
		Then $W_{1} \cap W_{2} = W_{2}  $ and $\Pro_{W_{1} \cap W_{2}} =\Pro_{ W_{2}} $.
		\item \label{item:fact:W1capW2:u1u2LD:vi} $u_{1} = 0$, $u_{2} \neq 0$, and  and  $\eta_{1} <0$.  Then $W_{1} \cap W_{2} = \varnothing$.
		\item \label{item:fact:W1capW2:u1u2LD:vii} $u_{1} \neq 0$, $u_{2} \neq0$, and  $\innp{u_{1},u_{2}} >0$. Then $W_{1} \cap W_{2} = \{x \in \mathcal{H} ~:~ \innp{x, u }\leq \eta \} $ where $u=\norm{u_{2}}u_{1}$ and $\eta = \min \{ \eta_{1}\norm{u_{2}}, \eta_{2} \norm{u_{1}} \}$, and 
		\begin{align*}
		(\forall x \in \mathcal{H}) \quad \Pro_{W_{1} \cap W_{2}} x= \begin{cases}
		x, \quad  &\text{if } \innp{x,u} \leq \eta;\\
		x+ \frac{\eta - \innp{x, u }}{\norm{u }^{2}} u, \quad &\text{if } \innp{x,u} > \eta.
		\end{cases}
		\end{align*}
		\item \label{item:fact:W1capW2:u1u2LD:viii} $u_{1} \neq 0$, $u_{2} \neq 0$,   $\innp{u_{1},u_{2}} <0$, and  $\eta_{1} \norm{u_{2}} +\eta_{2} \norm{u_{1} }<0$.    Then $W_{1} \cap W_{2} = \varnothing$.
		\item \label{item:fact:W1capW2:u1u2LD:ix} $u_{1} \neq 0$, $u_{2} \neq 0$,   $\innp{u_{1},u_{2}} <0$, and $\eta_{1} \norm{u_{2}} +\eta_{2} \norm{u_{1} } \geq 0$.    Then 
		$W_{1} \cap W_{2} = \{x \in \mathcal{H} ~:~ \gamma_{1} \leq \innp{x, u }\leq \gamma_{2} \} \neq \varnothing $ where $u=\norm{u_{2}}u_{1}$,  $\gamma_{1}=-\eta_{2} \norm{u_{1}}$,   and $ \gamma_{2} =\eta_{1} \norm{u_{2}}$, and
		\begin{align*}
		(\forall x \in \mathcal{H}) \quad \Pro_{W_{1} \cap W_{2}} x= \begin{cases}
		x -  \frac{\innp{x, u } -\gamma_{1}}{\norm{u }^{2}} u, \quad &\text{if } \innp{x,u} < \gamma_{1};\\
		x, \quad  &\text{if }  \gamma_{1} \leq \innp{x,u} \leq \gamma_{2};\\
		x-  \frac{ \innp{x, u } -\gamma_{2}}{\norm{u }^{2}} u, \quad &\text{if } \innp{x,u} > \gamma_{2}.
		\end{cases}
		\end{align*}
	\end{enumerate}
\end{fact}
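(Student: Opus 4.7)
The plan is to separate the statement into two groups: the degenerate subcases \textup{(i)}--\textup{(vi)} where at least one of $u_{1}, u_{2}$ vanishes, and the nondegenerate subcases \textup{(vii)}--\textup{(ix)} where both are nonzero and linearly dependent. Cases \textup{(i)}--\textup{(vi)} require only the observation that if $u_{i} = 0$ then $W_{i} = \mathcal{H}$ precisely when $\eta_{i} \geq 0$ and $W_{i} = \varnothing$ otherwise. Running through the six combinations (either $u_{1}, u_{2}$ both zero, or exactly one of them zero) together with the signs of the relevant $\eta_{i}$ handles them; in each surviving case $W_{1} \cap W_{2}$ either equals $\mathcal{H}$ or collapses to the single nonempty $W_{j}$, so the projection identities are immediate.

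For the nondegenerate subcases, apply \cref{lem:basic:u1u2:LinerDep}\cref{item:lem:basic:u1u2:LinerDep:LD:u1neq0} to write
\begin{align*}
u_{2} = \frac{\norm{u_{2}}}{\norm{u_{1}}} u_{1} \quad \text{when } \innp{u_{1}, u_{2}} > 0, \qquad u_{2} = -\frac{\norm{u_{2}}}{\norm{u_{1}}} u_{1} \quad \text{when } \innp{u_{1}, u_{2}} < 0.
\end{align*}
In the positive case, $W_{2}$ rewrites as a halfspace with normal parallel to $u_{1}$, so $W_{1} \cap W_{2}$ collapses to the single halfspace with normal $u = \norm{u_{2}} u_{1}$ and bound $\eta = \min\{\eta_{1}\norm{u_{2}}, \eta_{2}\norm{u_{1}}\}$; \textup{(vii)} then follows at once from \cref{fact:Projec:halfspaces}. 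In the negative case, the inequality defining $W_{2}$ flips, making $W_{1} \cap W_{2}$ the slab $\{x : \gamma_{1} \leq \innp{x, u} \leq \gamma_{2}\}$ with $u = \norm{u_{2}} u_{1}$, $\gamma_{1} = -\eta_{2}\norm{u_{1}}$, and $\gamma_{2} = \eta_{1}\norm{u_{2}}$.

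This slab is nonempty iff $\gamma_{1} \leq \gamma_{2}$, i.e.\ $\eta_{1}\norm{u_{2}} + \eta_{2}\norm{u_{1}} \geq 0$, which is exactly the dichotomy separating \textup{(viii)} from \textup{(ix)}. For \textup{(ix)}, the three-branch projection formula follows by a region-wise verification of the variational characterization $z = \Pro_{W_{1}\cap W_{2}}x \Leftrightarrow z \in W_{1}\cap W_{2}$ and $\innp{x-z, y-z} \leq 0$ for all $y \in W_{1}\cap W_{2}$, invoking \cref{fact:Projec:Hyperplane} on each of the two bounding hyperplanes $\innp{x, u} = \gamma_{1}$ and $\innp{x, u} = \gamma_{2}$.

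The main obstacle is essentially notational bookkeeping: the sign choice in $u_{2} = \pm(\norm{u_{2}}/\norm{u_{1}})\,u_{1}$ dictates whether the intersection degenerates into a halfspace or a slab, and the rescaling $u = \norm{u_{2}} u_{1}$ must be tracked carefully to recover precisely the constants $\eta, \gamma_{1}, \gamma_{2}$ displayed in the statement. Beyond that, no genuinely new ideas are needed: the entire proof is an exhaustive case analysis that reduces every branch to either \cref{fact:Projec:halfspaces} or the elementary slab projection formula.
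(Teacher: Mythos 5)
The paper offers no proof of \cref{fact:W1capW2:u1u2LD}: it is stated as a Fact quoted verbatim from \cite[Proposition~29.22]{BC2017}, so there is nothing internal to compare your argument against. Your proposal is nonetheless a correct, self-contained derivation along the standard lines: the reduction of the degenerate cases to $W_{i}=\mathcal{H}$ or $W_{i}=\varnothing$ according to the sign of $\eta_{i}$, the rewriting $u=\norm{u_{2}}u_{1}=\pm\norm{u_{1}}u_{2}$ via \cref{lem:basic:u1u2:LinerDep}, the collapse to a single halfspace (handled by \cref{fact:Projec:halfspaces}) when $\innp{u_{1},u_{2}}>0$ and to a slab when $\innp{u_{1},u_{2}}<0$, the nonemptiness criterion $\gamma_{1}\leq\gamma_{2}$, and the region-wise verification of the slab projection through the variational inequality are all sound and recover exactly the stated constants.
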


\begin{lemma} \label{lemma:u1u2LD:Cases}
	Suppose that  $u_{1} \neq 0$ and $u_{2} \neq 0$. 
	\begin{enumerate}
		\item \label{lemma:u1u2LD:Cases:>} Suppose that $\innp{u_{1},u_{2}} >0$.
		\begin{enumerate}
			\item \label{lemma:u1u2LD:Cases:>:leq} If $\eta_{1} \norm{u_{2}} \leq \eta_{2} \norm{u_{1}} $, then $W_{1} \cap W_{2}=W_{1}$ and $H_{1} \subseteq W_{2}$.
			\item \label{lemma:u1u2LD:Cases:>:>} If $\eta_{1} \norm{u_{2}} > \eta_{2} \norm{u_{1}} $, then $W_{1} \cap W_{2}=W_{2}$ and $H_{1} \subseteq W^{c}_{2}$.
		\end{enumerate}
		\item \label{lemma:u1u2LD:Cases:<} Suppose that $\innp{u_{1},u_{2}} <0$ and that $\eta_{1} \norm{u_{2}} +\eta_{2} \norm{u_{1} } \geq 0$.    Then $H_{1} \subseteq W_{2}$ and $H_{2} \subseteq W_{1}$.
	\end{enumerate}
\end{lemma}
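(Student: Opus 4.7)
The plan is to reduce every inclusion to a scalar inequality by exploiting the linear dependence of $u_1$ and $u_2$. Since $u_1 \neq 0$ and $u_1, u_2$ are linearly dependent, \cref{lem:basic:u1u2:LinerDep}\cref{item:lem:basic:u1u2:LinerDep:LD:u1neq0} yields the explicit relation $u_2 = \varepsilon \frac{\norm{u_2}}{\norm{u_1}} u_1$, where $\varepsilon = +1$ if $\innp{u_1,u_2} > 0$ and $\varepsilon = -1$ if $\innp{u_1,u_2} < 0$. This rewrites $\innp{x,u_2}$ as a scalar multiple of $\innp{x,u_1}$, so the halfspaces $W_1, W_2$ and hyperplanes $H_1, H_2$ can all be described via the single linear functional $x \mapsto \innp{x,u_1}$, after which each claim reduces to comparing the thresholds.

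For \cref{lemma:u1u2LD:Cases:>}, the plan is to set $u_2 = \frac{\norm{u_2}}{\norm{u_1}} u_1$ and rewrite $W_2 = \{x : \innp{x,u_1} \leq \eta_2 \norm{u_1}/\norm{u_2}\}$. Then $W_1 \cap W_2 = \{x : \innp{x,u_1} \leq \min\{\eta_1, \eta_2 \norm{u_1}/\norm{u_2}\}\}$. Under \cref{lemma:u1u2LD:Cases:>:leq} the minimum equals $\eta_1$, giving $W_1 \cap W_2 = W_1$; and for $x \in H_1$ one has $\innp{x,u_2} = \eta_1 \norm{u_2}/\norm{u_1} \leq \eta_2$ by hypothesis, so $H_1 \subseteq W_2$. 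Under \cref{lemma:u1u2LD:Cases:>:>} the minimum equals $\eta_2\norm{u_1}/\norm{u_2}$, giving $W_1 \cap W_2 = W_2$; and the same computation now yields $\innp{x,u_2} > \eta_2$ for $x \in H_1$, i.e.\ $H_1 \subseteq W^{c}_{2}$.

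For \cref{lemma:u1u2LD:Cases:<}, the plan is to set $u_2 = -\frac{\norm{u_2}}{\norm{u_1}} u_1$, so that $W_2 = \{x : \innp{x,u_1} \geq -\eta_2 \norm{u_1}/\norm{u_2}\}$ and $H_2 = \{x : \innp{x,u_1} = -\eta_2 \norm{u_1}/\norm{u_2}\}$. For $x \in H_1$, $\innp{x,u_1} = \eta_1$, so $x \in W_2$ iff $\eta_1 \geq -\eta_2 \norm{u_1}/\norm{u_2}$, which is exactly the hypothesis $\eta_1 \norm{u_2} + \eta_2 \norm{u_1} \geq 0$; this establishes $H_1 \subseteq W_2$. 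Symmetrically, for $x \in H_2$, $\innp{x,u_1} = -\eta_2\norm{u_1}/\norm{u_2}$, so $x \in W_1$ iff $-\eta_2\norm{u_1}/\norm{u_2} \leq \eta_1$, the same hypothesis; this gives $H_2 \subseteq W_1$.

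There is no real obstacle here; the only care needed is the bookkeeping of the sign $\varepsilon$ (so that one correctly flips the inequality defining $W_2$ in case \cref{lemma:u1u2LD:Cases:<}) and the correct grouping of $\norm{u_1}, \norm{u_2}$ factors when translating between $\innp{x,u_1}$ thresholds and the constants $\eta_1, \eta_2$.
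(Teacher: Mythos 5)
Your proposal is correct and follows essentially the same route as the paper: both invoke \cref{lem:basic:u1u2:LinerDep}\cref{item:lem:basic:u1u2:LinerDep:LD:u1neq0} to write $u_{2}$ as $\pm\frac{\norm{u_{2}}}{\norm{u_{1}}}u_{1}$ and then reduce each inclusion to a comparison of scalar thresholds for the single functional $x\mapsto\innp{x,u_{1}}$. The only cosmetic difference is that the paper works with the rescaled vector $u:=\norm{u_{2}}u_{1}$ and compares $\eta_{1}\norm{u_{2}}$ with $\eta_{2}\norm{u_{1}}$, whereas you divide through by $\norm{u_{2}}/\norm{u_{1}}$; the computations are identical.
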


\begin{proof}
	\cref{lemma:u1u2LD:Cases:>}:  Set $u:=\norm{u_{2}}u_{1}$.
	Then the assumptions and  \cref{lem:basic:u1u2:LinerDep}\cref{item:lem:basic:u1u2:LinerDep:LD:u1neq0} imply that
	\begin{align}  \label{eq:lemma:u1u2LD:Cases:u}
	u=\norm{u_{2}}u_{1} = \norm{u_{1}}u_{2}.  
	\end{align}
	As a consequence of \cref{eq:W12},  we see that for every $y \in \mathcal{H}$,
	\begin{subequations} \label{eq:lemma:W1capW2:u1u2LD}
		\begin{align} 
		& y \in W_{1} \Leftrightarrow  \innp{y, u_{1}} \leq \eta_{1} \Leftrightarrow \norm{u_{2}}  \innp{y, u_{1}} \leq \norm{u_{2}} \eta_{1} \stackrel{\cref{eq:lemma:u1u2LD:Cases:u}}{\Leftrightarrow} \innp{y,u} \leq \eta_{1} \norm{u_{2}}, \label{eq:lemma:u1u2LD:Cases:W1} \\
		& y \in W_{2} \Leftrightarrow  \innp{y, u_{2}} \leq \eta_{2} \Leftrightarrow \norm{u_{1}} \innp{y, u_{2}} \leq \norm{u_{1}}  \eta_{2} \stackrel{\cref{eq:lemma:u1u2LD:Cases:u}}{\Leftrightarrow} \innp{y,u} \leq \eta_{2} \norm{u_{1}}, \label{eq:lemma:u1u2LD:Cases:W2}
		\end{align} 
	\end{subequations}

	\cref{lemma:u1u2LD:Cases:>:leq}:  Suppose that $\eta_{1} \norm{u_{2}} \leq \eta_{2} \norm{u_{1}} $.  Then the required results follow from  \cref{eq:W12} and \cref{eq:lemma:W1capW2:u1u2LD}.

	\cref{lemma:u1u2LD:Cases:>:>}: Suppose   that $\eta_{1} \norm{u_{2}} > \eta_{2} \norm{u_{1}} $. Then it is easy to see $W_{2} \subseteq W_{1}$ from \cref{eq:lemma:W1capW2:u1u2LD}.  Let $x \in \mathcal{H}$. Now,
	\begin{align*}
	x \in H_{1} \Leftrightarrow \innp{x, u_{1}} = \eta_{1} \stackrel{\cref{eq:lemma:u1u2LD:Cases:u}}{\Leftrightarrow} \innp{x,u} = \eta_{1} \norm{u_{2}} >  \eta_{2} \norm{u_{1}}  \stackrel{\cref{eq:lemma:u1u2LD:Cases:W2}}{\Rightarrow} x \in W^{c}_{2}.
	\end{align*}
	Hence, $H_{1}  \subseteq W^{c}_{2}$.
	
	\cref{lemma:u1u2LD:Cases:<}: Suppose that $\innp{u_{1},u_{2}} <0$ and that $\eta_{1} \norm{u_{2}} +\eta_{2} \norm{u_{1} } \geq 0$. 
	By assumptions and  \cref{lem:basic:u1u2:LinerDep}\cref{item:lem:basic:u1u2:LinerDep:LD:u1neq0}, we know that 
	\begin{align}  \label{eq:lemma:u1u2LD:Cases:u:-}
	\norm{u_{2}}u_{1} = -\norm{u_{1}}u_{2}.  
	\end{align}
	
Clearly $\eta_{1} \norm{u_{2}} +\eta_{2} \norm{u_{1} } \geq 0$ implies 
	\begin{align} \label{eq:PW1capW2:LD:Case5.3:assump}
	-\frac{\norm{u_{2}}}{ \norm{u_{1}}}\eta_{1}  \leq \eta_{2} \quad \text{and} \quad -\frac{\norm{u_{1}}}{ \norm{u_{2}}}\eta_{2}  \leq \eta_{1}.
	\end{align}
	Let $x \in \mathcal{H}$. Now,
	\begin{subequations}
		\begin{align*}
		x\in H_{1} \Leftrightarrow	\innp{x, u_{1}} =\eta_{1}  \Leftrightarrow  -\frac{\norm{u_{2}}}{ \norm{u_{1}}}\innp{x, u_{1}} = -\frac{\norm{u_{2}}}{ \norm{u_{1}}}\eta_{1}  \stackrel{\cref{eq:lemma:u1u2LD:Cases:u:-}}{\Leftrightarrow} \innp{x, u_{2}} = -\frac{\norm{u_{2}}}{ \norm{u_{1}}}\eta_{1}  \stackrel{\cref{eq:PW1capW2:LD:Case5.3:assump}}{\leq} \eta_{2} \Leftrightarrow x \in W_{2},\\
		x\in H_{2} \Leftrightarrow	\innp{x, u_{2}} =\eta_{2}  \Leftrightarrow  -\frac{\norm{u_{1}}}{ \norm{u_{2}}}\innp{x, u_{2}} = -\frac{\norm{u_{1}}}{ \norm{u_{2}}}\eta_{2}  \stackrel{\cref{eq:lemma:u1u2LD:Cases:u:-}}{\Leftrightarrow} \innp{x, u_{1}} = -\frac{\norm{u_{1}}}{ \norm{u_{2}}}\eta_{2}  \stackrel{\cref{eq:PW1capW2:LD:Case5.3:assump}}{\leq}  \eta_{1} \Leftrightarrow x \in W_{1},
		\end{align*}
	\end{subequations}
	which imply that $H_{1} \subseteq W_{2}$ and $H_{2} \subseteq W_{1}$. 
\end{proof}

\begin{lemma}  \label{lemma:W1capW2:u1u2LD:vii}
	Suppose that  $u_{1} \neq 0$, $u_{2} \neq0$, and  $\innp{u_{1},u_{2}} >0$. Let $x \in \mathcal{H}$.  Then exactly one of the following cases occurs:
	\begin{enumerate}
		\item \label{item:lemma:W1capW2:u1u2LD:vii:i} $x \in W_{1} \cap W_{2}$. Then $\Pro_{W_{1} \cap W_{2}} x=x$.
		\item \label{item:lemma:W1capW2:u1u2LD:vii:ii} $x \in W_{1} \cap W^{c}_{2}$.  Then $\Pro_{W_{1} \cap W_{2}} x= \Pro_{H_{2}}x$.
		\item \label{item:lemma:W1capW2:u1u2LD:vii:iii} $x \in W^{c}_{1} \cap W_{2}$.   Then $\Pro_{W_{1} \cap W_{2}} x= \Pro_{H_{1}}x$.
		\item  \label{item:lemma:W1capW2:u1u2LD:vii:iv} $x \in W^{c}_{1} \cap W^{c}_{2}$.   Then 
		\begin{align*}
		\Pro_{W_{1} \cap W_{2}} x= 
		\begin{cases}
		\Pro_{H_{1}}x, \quad & \text{if } \eta_{1} \norm{u_{2}} \leq \eta_{2} \norm{u_{1}};\\
		\Pro_{H_{2}}x, \quad & \text{if } \eta_{1} \norm{u_{2}} > \eta_{2} \norm{u_{1}}.
		\end{cases}
		\end{align*}
	\end{enumerate}
\end{lemma}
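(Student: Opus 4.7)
The plan is to reduce each case to the elementary projection formulae of \Cref{fact:Projec:halfspaces} and \Cref{remark:FactWFactH} by first invoking \Cref{lemma:u1u2LD:Cases}\cref{lemma:u1u2LD:Cases:>} to split into two subcases according to whether $\eta_{1} \norm{u_{2}} \leq \eta_{2} \norm{u_{1}}$ or $\eta_{1} \norm{u_{2}} > \eta_{2} \norm{u_{1}}$.

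In the first subcase we have $W_{1} \cap W_{2} = W_{1}$, hence $W_{1} \subseteq W_{2}$, which immediately forces $W_{1} \cap W^{c}_{2} = \varnothing$ and so makes \cref{item:lemma:W1capW2:u1u2LD:vii:ii} vacuous; for \cref{item:lemma:W1capW2:u1u2LD:vii:i} the identity $\Pro_{W_{1} \cap W_{2}} x = x$ is automatic; and for \cref{item:lemma:W1capW2:u1u2LD:vii:iii} together with the branch $\eta_{1}\norm{u_{2}} \leq \eta_{2}\norm{u_{1}}$ of \cref{item:lemma:W1capW2:u1u2LD:vii:iv} we have $x \in W^{c}_{1}$, so $\Pro_{W_{1} \cap W_{2}} x = \Pro_{W_{1}} x = \Pro_{H_{1}} x$ by \cref{fact:Projec:halfspaces} and \cref{remark:FactWFactH}.

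The second subcase is symmetric: $W_{1} \cap W_{2} = W_{2}$ yields $W_{2} \subseteq W_{1}$, so $W^{c}_{1} \cap W_{2} = \varnothing$ makes \cref{item:lemma:W1capW2:u1u2LD:vii:iii} vacuous, while for \cref{item:lemma:W1capW2:u1u2LD:vii:ii} and the remaining branch of \cref{item:lemma:W1capW2:u1u2LD:vii:iv} we have $x \in W^{c}_{2}$, giving $\Pro_{W_{1} \cap W_{2}} x = \Pro_{W_{2}} x = \Pro_{H_{2}} x$ by the same reasoning. No serious obstacle is anticipated: the whole argument is routine case-matching, the only real observation being that each of the four stated cases is compatible with exactly one of the two subcases of \Cref{lemma:u1u2LD:Cases}, so the formula for $\Pro_{W_{1} \cap W_{2}}$ always reduces to a single-halfspace projection whose value outside the halfspace coincides with the projection onto the bounding hyperplane.
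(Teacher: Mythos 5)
Your proof is correct and uses the same two ingredients as the paper's own argument, namely \cref{lemma:u1u2LD:Cases}\cref{lemma:u1u2LD:Cases:>} to identify $W_{1}\cap W_{2}$ with $W_{1}$ or $W_{2}$ and \cref{remark:FactWFactH} to replace the halfspace projection by the hyperplane projection outside the halfspace; the only difference is that you split on the inequality $\eta_{1}\norm{u_{2}}\lessgtr\eta_{2}\norm{u_{1}}$ first and then match the four cases, whereas the paper runs through the four cases and deduces within each which inequality must hold. This is a harmless reordering of the same case analysis, so the proposal is essentially the paper's proof.
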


\begin{proof}
		\cref{item:lemma:W1capW2:u1u2LD:vii:i}: It is trivial by the definition of projection.
	
	\cref{item:lemma:W1capW2:u1u2LD:vii:ii}: The assumption shows that $W_{1} \cap W^{c}_{2} \neq \varnothing$, which yields $W_{1} \not \subseteq W_{2}$. Hence, by \cref{lemma:u1u2LD:Cases}\cref{lemma:u1u2LD:Cases:>:leq},   we have that  $\eta_{1} \norm{u_{2}} > \eta_{2} \norm{u_{1}} $ and  $W_{1} \cap W_{2}=W_{2}$.  Therefore, by \cref{remark:FactWFactH}, $\Pro_{W_{1} \cap W_{2}} x= \Pro_{W_{2}}x= \Pro_{H_{2}}x$.

	\cref{item:lemma:W1capW2:u1u2LD:vii:iii}:  Switch $W_{1}$ and $W_{2}$ in \cref{item:lemma:W1capW2:u1u2LD:vii:ii} to obtain	\cref{item:lemma:W1capW2:u1u2LD:vii:iii}.

	\cref{item:lemma:W1capW2:u1u2LD:vii:iv}:  The desired result follows from the assumptions,  \cref{lemma:u1u2LD:Cases}\cref{lemma:u1u2LD:Cases:>}  and  \cref{fact:Projec:Hyperplane}.
\end{proof}

\begin{lemma}  \label{lemma:W1capW2:u1u2LD:ix}
	Suppose that  $u_{1} \neq 0$, $u_{2} \neq 0$,   $\innp{u_{1},u_{2}} <0$, and $\eta_{1} \norm{u_{2}} +\eta_{2} \norm{u_{1} } \geq 0$. Let $x \in \mathcal{H}$.   Then the following statements hold:
	\begin{enumerate}
		\item \label{lemma:W1capW2:u1u2LD:ix:i} If $x \in W_{1} \cap W_{2}$, then $\Pro_{W_{1} \cap W_{2}} x=x$.
		\item \label{lemma:W1capW2:u1u2LD:ix:ii}  If $x \in W_{1} \cap W^{c}_{2}$, then $\Pro_{W_{1} \cap W_{2}} x= \Pro_{H_{2}}x$.
		\item \label{lemma:W1capW2:u1u2LD:ix:iii}  If $x \in W^{c}_{1} \cap W_{2}$, then $\Pro_{W_{1} \cap W_{2}} x= \Pro_{H_{1}}x$.
		\item \label{lemma:W1capW2:u1u2LD:ix:iv} $W^{c}_{1} \cap W^{c}_{2} =\varnothing$. 
	\end{enumerate}
\end{lemma}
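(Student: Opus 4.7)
My plan is to exploit the two containments $H_{1}\subseteq W_{2}$ and $H_{2}\subseteq W_{1}$ already established in \cref{lemma:u1u2LD:Cases}\cref{lemma:u1u2LD:Cases:<}, together with the explicit projection formula onto a halfspace from \cref{remark:FactWFactH} and the general principle from \cref{fact:AsubseteqB:Projection} that if $A\subseteq B$ and $\Pro_{B}x\in A$ then $\Pro_{B}x=\Pro_{A}x$. From the hypothesis $u_{1}\neq 0$, $u_{2}\neq 0$, $u_{1},u_{2}$ linearly dependent, and $\innp{u_{1},u_{2}}<0$, \cref{lem:basic:u1u2:LinerDep}\cref{item:lem:basic:u1u2:LinerDep:LD:u1neq0} gives the key identity $\norm{u_{2}}u_{1}=-\norm{u_{1}}u_{2}$, which will be used throughout.

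Part \cref{lemma:W1capW2:u1u2LD:ix:i} is immediate from the definition of the projection. For part \cref{lemma:W1capW2:u1u2LD:ix:ii}, I start from $x\in W_{1}\cap W_{2}^{c}$; then \cref{remark:FactWFactH} yields $\Pro_{W_{2}}x=\Pro_{H_{2}}x$, and the containment $H_{2}\subseteq W_{1}$ from \cref{lemma:u1u2LD:Cases}\cref{lemma:u1u2LD:Cases:<} shows $\Pro_{W_{2}}x\in W_{1}\cap W_{2}$. Since $W_{1}\cap W_{2}\subseteq W_{2}$, \cref{fact:AsubseteqB:Projection} delivers $\Pro_{W_{1}\cap W_{2}}x=\Pro_{W_{2}}x=\Pro_{H_{2}}x$. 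Part \cref{lemma:W1capW2:u1u2LD:ix:iii} follows by the symmetric argument, using $H_{1}\subseteq W_{2}$.

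The only step that requires real calculation is \cref{lemma:W1capW2:u1u2LD:ix:iv}. Suppose for contradiction there exists $x\in W_{1}^{c}\cap W_{2}^{c}$, i.e.\ $\innp{x,u_{1}}>\eta_{1}$ and $\innp{x,u_{2}}>\eta_{2}$. Multiplying the first inequality by $\norm{u_{2}}>0$ and using $\norm{u_{2}}u_{1}=-\norm{u_{1}}u_{2}$ converts it to $-\norm{u_{1}}\innp{x,u_{2}}>\norm{u_{2}}\eta_{1}$, i.e.\ $\innp{x,u_{2}}<-\tfrac{\norm{u_{2}}}{\norm{u_{1}}}\eta_{1}$. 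Combining with $\innp{x,u_{2}}>\eta_{2}$ forces $\eta_{2}\norm{u_{1}}+\eta_{1}\norm{u_{2}}<0$, which contradicts the standing hypothesis $\eta_{1}\norm{u_{2}}+\eta_{2}\norm{u_{1}}\geq 0$. Hence $W_{1}^{c}\cap W_{2}^{c}=\varnothing$.

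No real obstacle is anticipated; the whole lemma reduces to packaging the linear dependence identity with the two prepared containments, and the only mild computational step is the sign flip in \cref{lemma:W1capW2:u1u2LD:ix:iv}.
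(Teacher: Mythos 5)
Your proof is correct, and for parts \cref{lemma:W1capW2:u1u2LD:ix:ii} and \cref{lemma:W1capW2:u1u2LD:ix:iii} it takes a genuinely different and cleaner route than the paper. The paper invokes the explicit slab representation of $W_{1}\cap W_{2}$ from \cref{fact:W1capW2:u1u2LD}\cref{item:fact:W1capW2:u1u2LD:ix}, writes down the three-case projection formula onto $\{y:\gamma_{1}\leq\innp{y,u}\leq\gamma_{2}\}$, identifies which branch applies when $x\in W_{1}\cap W_{2}^{c}$, and then algebraically simplifies $x-\frac{\innp{x,u}-\gamma_{1}}{\norm{u}^{2}}u$ back to $\Pro_{H_{2}}x$ using $\norm{u_{2}}u_{1}=-\norm{u_{1}}u_{2}$. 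You instead observe that $\Pro_{W_{2}}x=\Pro_{H_{2}}x\in H_{2}\subseteq W_{1}$ by \cref{remark:FactWFactH} and \cref{lemma:u1u2LD:Cases}\cref{lemma:u1u2LD:Cases:<}, so that \cref{fact:AsubseteqB:Projection} applied to $W_{1}\cap W_{2}\subseteq W_{2}$ immediately yields $\Pro_{W_{1}\cap W_{2}}x=\Pro_{W_{2}}x$; this avoids the slab formula and all computation, at the cost of leaning on the prepared containments of \cref{lemma:u1u2LD:Cases} (whose hypotheses are exactly those of the present lemma, so this is legitimate). The only housekeeping point is that \cref{fact:AsubseteqB:Projection} requires $W_{1}\cap W_{2}\neq\varnothing$, which you should note follows, e.g., from $\varnothing\neq H_{1}\subseteq W_{1}\cap W_{2}$. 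Your argument for \cref{lemma:W1capW2:u1u2LD:ix:iv} is essentially identical to the paper's contradiction argument.
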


\begin{proof} 
	Set \begin{align} \label{eq:lemma:W1capW2:u1u2LD:ix:Notation}
	u:=\norm{u_{2}}u_{1}, \quad  \gamma_{1}:=-\eta_{2} \norm{u_{1}}, \quad \text{and} \quad \gamma_{2} :=\eta_{1} \norm{u_{2}}.
	\end{align}
		From  assumptions and \cref{lem:basic:u1u2:LinerDep}\cref{item:lem:basic:u1u2:LinerDep:LD:u1neq0}, we know that 
	\begin{align}  \label{eq:lemma:W1capW2:u1u2LD:ix:u}
	u=\norm{u_{2}}u_{1} = -\norm{u_{1}}u_{2}.  
	\end{align}
	As a consequence of  \cref{fact:W1capW2:u1u2LD}\cref{item:fact:W1capW2:u1u2LD:ix}, we see that $W_{1} \cap W_{2} = \{x \in \mathcal{H} ~:~ \gamma_{1} \leq \innp{x, u }\leq \gamma_{2} \} \neq \varnothing $
	and 
	\begin{align} \label{eq:lemma:W1capW2:u1u2LD:ix:PW1capW2}
	(\forall x \in \mathcal{H}) \quad \Pro_{W_{1} \cap W_{2}} x= \begin{cases}
	x -  \frac{\innp{x, u } -\gamma_{1}}{\norm{u }^{2}} u, \quad &\text{if } \innp{x,u} < \gamma_{1};\\
	x, \quad  &\text{if }  \gamma_{1} \leq \innp{x,u} \leq \gamma_{2};\\
	x-  \frac{ \innp{x, u } -\gamma_{2}}{\norm{u }^{2}} u, \quad &\text{if } \innp{x,u} > \gamma_{2}.
	\end{cases}
	\end{align}
	Hence, according to \cref{eq:W12},  for every $y \in \mathcal{H}$,
	\begin{subequations}\label{eq:lemma:W1capW2:u1u2LD:ix:W1W2}
		\begin{align} 
		&y \in W_{1} \Leftrightarrow  \innp{y, u_{1}} \leq \eta_{1} \Leftrightarrow \norm{u_{2}} \innp{y, u_{1}} \leq \norm{u_{2}} \eta_{1} \stackrel{\cref{eq:lemma:W1capW2:u1u2LD:ix:Notation}}{\Leftrightarrow}  \innp{y,u} \leq \eta_{1} \norm{u_{2}}  \stackrel{\cref{eq:lemma:W1capW2:u1u2LD:ix:Notation}}{\Leftrightarrow} \innp{y,u} \leq \gamma_{2}, \label{eq:lemma:W1capW2:u1u2LD:ix:W1}\\
		&y \in W_{2} \Leftrightarrow  \innp{y, u_{2}} \leq \eta_{2} \Leftrightarrow \norm{u_{1}} \innp{y, u_{2}} \leq \norm{u_{1}}\eta_{2} \Leftrightarrow - \eta_{2} \norm{u_{1}} \leq \innp{y,- \norm{u_{1}}u_{2}} \stackrel{\cref{eq:lemma:W1capW2:u1u2LD:ix:u}}{\Leftrightarrow} \gamma_{1} \leq \innp{y, u}. \label{eq:lemma:W1capW2:u1u2LD:ix:W2}
		\end{align} 
	\end{subequations}

	\cref{lemma:W1capW2:u1u2LD:ix:i}: This is trivial.
	
	\cref{lemma:W1capW2:u1u2LD:ix:ii}: Assume that $x \in W_{1} \cap W^{c}_{2}$. Then  \cref{eq:lemma:W1capW2:u1u2LD:ix:W1W2} leads to $ \innp{x, u} \leq \gamma_{2}$ and $ \innp{x, u} < \gamma_{1}$. Hence, 
using  \cref{eq:lemma:W1capW2:u1u2LD:ix:PW1capW2}, \cref{eq:lemma:W1capW2:u1u2LD:ix:Notation},  and \cref{eq:lemma:W1capW2:u1u2LD:ix:u}, we have that
	\begin{align*}
	\Pro_{W_{1} \cap W_{2}} x  = x-  \frac{\innp{x, u } -\gamma_{1}}{\norm{u }^{2}} u  
	 =  x-  \frac{ \innp{x, -\norm{u_{1}}u_{2} } + \eta_{2} \norm{u_{1}} }{\norm{-\norm{u_{1}}u_{2} }^{2}} (-\norm{u_{1}}u_{2})   
	 = x + \frac{\eta_{2} - \innp{x, u_{2}}}{\norm{u_{2}}^{2}} u_{2} =\Pro_{H_{2}}x.
	\end{align*}
	
	\cref{lemma:W1capW2:u1u2LD:ix:iii}: The proof is similar to the proof of \cref{lemma:W1capW2:u1u2LD:ix:ii}.

	\cref{lemma:W1capW2:u1u2LD:ix:iv}:  Assume to the contrary that there exists $z \in W^{c}_{1} \cap W^{c}_{2}$. Then by 
	\cref{eq:W12}, 
	\begin{align*}
	\innp{z, u_{1}} > \eta_{1} ~\text{and} ~  \innp{z, u_{2}} > \eta_{2} & \stackrel{\cref{eq:lemma:W1capW2:u1u2LD:ix:u}}{\Leftrightarrow } \innp{z, u_{1}} > \eta_{1} ~\text{and} ~  \left< z, -\frac{\norm{u_{2}}}{\norm{u_{1}}}u_{1} \right>  > \eta_{2}  \\
	& ~\Leftrightarrow~  -\frac{\norm{u_{2}}}{\norm{u_{1}}} \eta_{1} >-\frac{\norm{u_{2}}}{\norm{u_{1}}} \innp{z, u_{1}} > \eta_{2} \\
	&  ~\Rightarrow~  \norm{u_{1}} \eta_{2} +\norm{u_{2}} \eta_{1} <0,
	\end{align*}
	which contradicts to the assumption that $\norm{u_{1}} \eta_{2} +\norm{u_{2}} \eta_{1} \geq 0$.
\end{proof}

\begin{theorem} \label{theorem:W1capW2:u1u2LD}
	Suppose that   $W_{1} \cap W_{2} \neq \varnothing$. Then $\Pro_{W_{2}} \Pro_{W_{1}} =\Pro_{W_{1} \cap W_{2}}$.
\end{theorem}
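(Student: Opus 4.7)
The plan is to do a case analysis based on \cref{fact:W1capW2:u1u2LD}. Since $W_{1}\cap W_{2}\neq\varnothing$, only cases \cref{item:fact:W1capW2:u1u2LD:i}, \cref{item:fact:W1capW2:u1u2LD:iii}, \cref{item:fact:W1capW2:u1u2LD:v}, \cref{item:fact:W1capW2:u1u2LD:vii}, and \cref{item:fact:W1capW2:u1u2LD:ix} are possible. The first three are immediate: if $u_{1}=u_{2}=0$ with $\min\{\eta_{1},\eta_{2}\}\geq 0$, then $\Pro_{W_{1}}=\Pro_{W_{2}}=\Id=\Pro_{W_{1}\cap W_{2}}$; if $W_{1}\cap W_{2}=W_{1}$ (resp.\ $W_{2}$), then $W_{1}\subseteq W_{2}$ (resp.\ $W_{2}\subseteq W_{1}$), so $\Pro_{W_{2}}\Pro_{W_{1}}x=\Pro_{W_{1}}x=\Pro_{W_{1}\cap W_{2}}x$ (resp.\ $\Pro_{W_{2}}\Pro_{W_{1}}x=\Pro_{W_{2}}x=\Pro_{W_{1}\cap W_{2}}x$, noting here that $\Pro_{W_{1}}$ acts as the identity on $W_{2}\subseteq W_{1}$). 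So the work is concentrated in cases \cref{item:fact:W1capW2:u1u2LD:vii} and \cref{item:fact:W1capW2:u1u2LD:ix}.

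For case \cref{item:fact:W1capW2:u1u2LD:vii}, where $u_{1}\neq 0$, $u_{2}\neq 0$, and $\innp{u_{1},u_{2}}>0$, I would partition $\mathcal{H}$ into the four regions in \cref{lemma:W1capW2:u1u2LD:vii} and check each against the value of $\Pro_{W_{2}}\Pro_{W_{1}}x$. When $x\in W_{1}\cap W_{2}$, both sides are $x$. When $x\in W_{1}\cap W^{c}_{2}$, $\Pro_{W_{1}}x=x$ and $\Pro_{W_{2}}x=\Pro_{H_{2}}x$ by \cref{remark:FactWFactH}, matching \cref{lemma:W1capW2:u1u2LD:vii}\cref{item:lemma:W1capW2:u1u2LD:vii:ii}. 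When $x\in W^{c}_{1}\cap W_{2}$, the relation $W_{1}\cap W_{2}=W_{2}$ forces $W_{2}\subseteq W_{1}$, which is incompatible with $x\in W^{c}_{1}\cap W_{2}$; hence \cref{lemma:u1u2LD:Cases}\cref{lemma:u1u2LD:Cases:>:leq} applies and gives $H_{1}\subseteq W_{2}$, so $\Pro_{H_{1}}x\in W_{2}$ and $\Pro_{W_{2}}\Pro_{W_{1}}x=\Pro_{H_{1}}x$. The only delicate subcase is $x\in W^{c}_{1}\cap W^{c}_{2}$: if $\eta_{1}\norm{u_{2}}\leq\eta_{2}\norm{u_{1}}$, then $H_{1}\subseteq W_{2}$ again forces $\Pro_{W_{2}}\Pro_{H_{1}}x=\Pro_{H_{1}}x$; if instead $\eta_{1}\norm{u_{2}}>\eta_{2}\norm{u_{1}}$, then $H_{1}\subseteq W^{c}_{2}$ by \cref{lemma:u1u2LD:Cases}\cref{lemma:u1u2LD:Cases:>:>}, so $\Pro_{H_{1}}x\in W^{c}_{2}$, hence $\Pro_{W_{2}}\Pro_{H_{1}}x=\Pro_{H_{2}}\Pro_{H_{1}}x$ by \cref{remark:FactWFactH}, and crucially $\Pro_{H_{2}}\Pro_{H_{1}}=\Pro_{H_{2}}$ by \cref{cor:PH2PH1}\cref{cor:PH2PH1:H2} (this is where linear dependence of $u_{1},u_{2}$ is used decisively). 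The outputs then line up with \cref{lemma:W1capW2:u1u2LD:vii}\cref{item:lemma:W1capW2:u1u2LD:vii:iv}.

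For case \cref{item:fact:W1capW2:u1u2LD:ix}, where $\innp{u_{1},u_{2}}<0$ and $\eta_{1}\norm{u_{2}}+\eta_{2}\norm{u_{1}}\geq 0$, the simplification is that $W^{c}_{1}\cap W^{c}_{2}=\varnothing$ by \cref{lemma:W1capW2:u1u2LD:ix}\cref{lemma:W1capW2:u1u2LD:ix:iv}, so only three regions need to be handled. The region $W_{1}\cap W_{2}$ is trivial; for $x\in W_{1}\cap W^{c}_{2}$, I compute $\Pro_{W_{2}}\Pro_{W_{1}}x=\Pro_{W_{2}}x=\Pro_{H_{2}}x$ which matches \cref{lemma:W1capW2:u1u2LD:ix}\cref{lemma:W1capW2:u1u2LD:ix:ii}; for $x\in W^{c}_{1}\cap W_{2}$, \cref{lemma:u1u2LD:Cases}\cref{lemma:u1u2LD:Cases:<} provides $H_{1}\subseteq W_{2}$, yielding $\Pro_{H_{1}}x\in W_{2}$ and $\Pro_{W_{2}}\Pro_{H_{1}}x=\Pro_{H_{1}}x$, in agreement with \cref{lemma:W1capW2:u1u2LD:ix}\cref{lemma:W1capW2:u1u2LD:ix:iii}.

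The main obstacle is the subcase $x\in W^{c}_{1}\cap W^{c}_{2}$ with $\eta_{1}\norm{u_{2}}>\eta_{2}\norm{u_{1}}$ in case \cref{item:fact:W1capW2:u1u2LD:vii}: one must verify that the two successive hyperplane reflections collapse to a single projection onto $H_{2}$, precisely the content of \cref{cor:PH2PH1}\cref{cor:PH2PH1:H2}. Every other subcase is essentially a matter of matching the explicit formulae from \cref{lemma:W1capW2:u1u2LD:vii} and \cref{lemma:W1capW2:u1u2LD:ix} to the correct output of $\Pro_{W_{2}}\Pro_{W_{1}}$, using \cref{fact:Projec:halfspaces}, \cref{fact:Projec:Hyperplane}, and \cref{remark:FactWFactH}.
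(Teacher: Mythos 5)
Your proposal is correct and follows essentially the same route as the paper's proof: a case analysis over \cref{fact:W1capW2:u1u2LD} restricted to the nonempty-intersection cases, with the two nontrivial cases \cref{fact:W1capW2:u1u2LD}\cref{item:fact:W1capW2:u1u2LD:vii} and \cref{fact:W1capW2:u1u2LD}\cref{item:fact:W1capW2:u1u2LD:ix} resolved by partitioning $\mathcal{H}$ into the four regions and matching $\Pro_{W_{2}}\Pro_{W_{1}}x$ against \cref{lemma:W1capW2:u1u2LD:vii} and \cref{lemma:W1capW2:u1u2LD:ix} via \cref{lemma:u1u2LD:Cases}, \cref{remark:FactWFactH}, and \cref{cor:PH2PH1}\cref{cor:PH2PH1:H2}. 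The only cosmetic difference is in the degenerate cases \cref{fact:W1capW2:u1u2LD}\cref{item:fact:W1capW2:u1u2LD:iii} and \cref{fact:W1capW2:u1u2LD}\cref{item:fact:W1capW2:u1u2LD:v}, where the paper argues directly from $W_{2}=\mathcal{H}$ (resp.\ $W_{1}=\mathcal{H}$) so that one factor is $\Id$, which is cleaner than your appeal to the inclusion $W_{2}\subseteq W_{1}$ alone (that inclusion by itself would not justify $\Pro_{W_{2}}\Pro_{W_{1}}=\Pro_{W_{2}}$ for a general starting point, but here $\Pro_{W_{1}}=\Id$ on all of $\mathcal{H}$, so the conclusion stands).
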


\begin{proof}
Using $W_{1} \cap W_{2} \neq \varnothing$,   
\cref{fact:W1capW2:u1u2LD}, and the linear dependence of $u_{1}$ and $u_{2}$,    we have exactly the following cases. 
	
	\textbf{Case~1:}  $u_{1}=u_{2}=0$ and $0 \leq \min\{\eta_{1}, \eta_{2} \}$.
	Then, due to \cref{fact:W1capW2:u1u2LD}\cref{item:fact:W1capW2:u1u2LD:i},  $W_{1} \cap W_{2} =\mathcal{H}$ and $\Pro_{W_{1} \cap W_{2}} =\Id$. Moreover, $W_{1} =\mathcal{H}$ and $W_{2} =\mathcal{H}$. Hence, $ \Pro_{W_{2} }\Pro_{W_{1} }= \Id =\Pro_{W_{1} \cap W_{2}} $.
	
	\textbf{Case~2:} $u_{1} \neq 0$, $u_{2}=0$, and $0 \leq \eta_{2}$. Then   \cref{fact:W1capW2:u1u2LD}\cref{item:fact:W1capW2:u1u2LD:iii} implies that $W_{1} \cap W_{2} =W_{1}$ and $\Pro_{W_{1} \cap W_{2}}  =\Pro_{W_{1}} $. Moreover, 
	$W_{2} =\mathcal{H}$, and $\Pro_{W_{2}} =\Id$. Hence, $ \Pro_{W_{2}} \Pro_{W_{1}} =\Id \Pro_{W_{1}} =\Pro_{W_{1}} =\Pro_{W_{1} \cap W_{2}} $.

	\textbf{Case~3:}  $u_{1} = 0$, $u_{2} \neq0$, and  $0 \leq \eta_{1}$.
	Then, by \cref{fact:W1capW2:u1u2LD}\cref{item:fact:W1capW2:u1u2LD:v}, $W_{1} \cap W_{2}=W_{2}$, and $\Pro_{W_{1} \cap W_{2}}  =\Pro_{W_{2}} $. Moreover,
	$W_{1} =\mathcal{H}$, and $\Pro_{W_{1}} =\Id$. Hence,
	$ \Pro_{W_{2}} \Pro_{W_{1}} = \Pro_{W_{2}} \Id =\Pro_{W_{2}}  = \Pro_{W_{1} \cap W_{2}} $.
	
	\textbf{Case~4:}  $u_{1} \neq 0$, $u_{2} \neq0$, and  $\innp{u_{1},u_{2}} >0$. Let $x \in \mathcal{H}$. Notice that  $\mathcal{H}= (W_{1} \cap W_{2}) \cup (W_{1} \cap W^{c}_{2}) \cup ( W^{c}_{1} \cap W_{2}) \cup (W^{c}_{1} \cap W^{c}_{2})$. We have exactly the following four subcases. 
	
	Case~4.1: $x \in W_{1} \cap W_{2}$.  Then, clearly, $\Pro_{W_{2}} \Pro_{W_{1}}x =x=\Pro_{W_{1} \cap W_{2}} x$.
	
	Case~4.2:  $x \in W_{1} \cap W^{c}_{2}$. Then $\Pro_{W_{1}}x=x $ and, by \cref{remark:FactWFactH},  $\Pro_{W_{2}}x =\Pro_{H_{2}}x $.  Hence,   \cref{lemma:W1capW2:u1u2LD:vii}\cref{item:lemma:W1capW2:u1u2LD:vii:ii} implies  $\Pro_{W_{2}} \Pro_{W_{1}}x = \Pro_{H_{2}}x =\Pro_{W_{1} \cap W_{2}} x$.
	
	Case~4.3:  $x \in W^{c}_{1} \cap W_{2}$. 
Then $W^{c}_{1} \cap W_{2} \neq \varnothing$, which implies that $W_{1} \cap W_{2}  \neq W_{2}$. By \cref{lemma:u1u2LD:Cases}\cref{lemma:u1u2LD:Cases:>:>}, we have that $\eta_{1} \norm{u_{2}} \leq \eta_{2} \norm{u_{1}} $, $W_{1} \cap W_{2}=W_{1}$ and $H_{1} \subseteq W_{2}$.  Combine these results with $x \in W^{c}_{1}$ and \cref{remark:FactWFactH} to obtain that $\Pro_{W_{2}} \Pro_{W_{1}}x = \Pro_{W_{2}} \Pro_{H_{1}}x= \Pro_{H_{1}}x =\Pro_{W_{1}}x=\Pro_{W_{1} \cap W_{2}} x$.
	
	Case~4.4: Assume that $x \in W^{c}_{1} \cap W^{c}_{2}$. Set $u:=\norm{u_{2}}u_{1}$ and $\eta := \min \{ \eta_{1}\norm{u_{2}}, \eta_{2} \norm{u_{1}} \}$. By \cref{lemma:W1capW2:u1u2LD:vii}\cref{item:lemma:W1capW2:u1u2LD:vii:iv}, 
	\begin{align} \label{eq:PW1capW2:LD:Case4.4:PW1capW2}
	\Pro_{W_{1} \cap W_{2}} x= 
	\begin{cases}
	\Pro_{H_{1}}x, \quad & \text{if } \eta_{1} \norm{u_{2}} \leq \eta_{2} \norm{u_{1}};\\
	\Pro_{H_{2}}x, \quad & \text{if } \eta_{1} \norm{u_{2}} > \eta_{2} \norm{u_{1}}.
	\end{cases}
	\end{align}

	Case~4.4.1: Suppose that $\eta_{1} \norm{u_{2}} \leq \eta_{2} \norm{u_{1}}$. Then $x \in W^{c}_{1} \cap W^{c}_{2}$ and
	 \cref{lemma:u1u2LD:Cases}\cref{lemma:u1u2LD:Cases:>:leq} yield  
	\begin{align*}
	\Pro_{W_{2}}\Pro_{W_{1}} x=\Pro_{W_{2}}\Pro_{H_{1}} x = \Pro_{H_{1}} x \stackrel{\cref{eq:PW1capW2:LD:Case4.4:PW1capW2}}{=} \Pro_{W_{1} \cap W_{2}}x.
	\end{align*}
	
	Case~4.4.2: Suppose that $\eta_{1} \norm{u_{2}} > \eta_{2} \norm{u_{1}}$. 
	Recall that $x \in W^{c}_{1} \cap W^{c}_{2}$, that $u_{1}$ and $u_{2}$ are linearly dependent with $u_{1} \neq 0$ and $u_{2} \neq 0$,   and that
\cref{lemma:u1u2LD:Cases}\cref{lemma:u1u2LD:Cases:>:>}, \cref{remark:FactWFactH}, and \cref{cor:PH2PH1}\cref{cor:PH2PH1:H2}. We obtain that
	\begin{align}\label{eq:theorem:W1capW2:u1u2LD:PW2PW1}
	\Pro_{W_{2}}\Pro_{W_{1}} x=\Pro_{W_{2}}\Pro_{H_{1}} x = \Pro_{H_{2}}\Pro_{H_{1}} x=\Pro_{H_{2}} x \stackrel{\cref{eq:PW1capW2:LD:Case4.4:PW1capW2}}{=}\Pro_{W_{1} \cap W_{2}}x.
	\end{align}

	\textbf{Case~5:}  $u_{1} \neq 0$, $u_{2} \neq 0$,   $\innp{u_{1},u_{2}} <0$, and $\eta_{1} \norm{u_{2}} +\eta_{2} \norm{u_{1} } \geq 0$.

Note that $\mathcal{H}= (W_{1} \cap W_{2}) \cup (W_{1} \cap W^{c}_{2}) \cup ( W^{c}_{1} \cap W_{2}) \cup (W^{c}_{1} \cap W^{c}_{2})$. Then  combine  \cref{lemma:W1capW2:u1u2LD:ix}\cref{lemma:W1capW2:u1u2LD:ix:iv} with $W^{c}_{1} \cap W^{c}_{2} =\varnothing$ to know that we have exactly the following three subcases. 
	
	Case~5.1:   $x \in W_{1} \cap W_{2}$. This is trivial.
	
	Case~5.2:  $x \in W_{1} \cap W^{c}_{2}$. Then by  \cref{remark:FactWFactH} and   \cref{lemma:W1capW2:u1u2LD:ix}\cref{lemma:W1capW2:u1u2LD:ix:ii}, $\Pro_{W_{2}} \Pro_{W_{1}}x =\Pro_{W_{2}} x=\Pro_{H_{2}} x = \Pro_{W_{1} \cap W_{2}} x$.
 
	Case~5.3: $x \in W^{c}_{1} \cap W_{2}$. 
	Then the identities  $	\Pro_{W_{2}}\Pro_{W_{1}} x=\Pro_{W_{2}}\Pro_{H_{1}} x= \Pro_{H_{1}}x= \Pro_{W_{1} \cap W_{2}} x$ follow easily from \cref{lemma:u1u2LD:Cases}\cref{lemma:u1u2LD:Cases:<} and  \cref{lemma:W1capW2:u1u2LD:ix}\cref{lemma:W1capW2:u1u2LD:ix:iii}.

	Altogether, $(\forall x \in \mathcal{H})$ $	\Pro_{W_{2}}\Pro_{W_{1}} x= \Pro_{W_{1} \cap W_{2}} x$, which means that the proof is complete. 
\end{proof}

\subsection*{$u_{1}$ and $u_{2}$ are linearly independent }
In the whole subsection, we assume that 
\begin{empheq}[box=\mybluebox]{equation*}
u_{1} \text{ and } u_{2} \text{ are linearly independent}.
\end{empheq}  
 Denote by
 \begin{subequations}
 	\begin{align}
 	& C_{1}:=\left\{x \in \mathcal{H} ~:~ \innp{x,u_{1}} > \eta_{1} ~\text{and}~ \norm{u_{1}}^{2} (\innp{x,u_{2}} - \eta_{2}) \leq \innp{u_{1}, u_{2}} (\innp{x,u_{1}} -\eta_{1})  \right\}, \label{eq:C1}\\
 	& C_{2}:=\left\{ x \in \mathcal{H} ~:~ \innp{x,u_{2}} > \eta_{2} ~\text{and}~ \norm{u_{2}}^{2} (\innp{x,u_{1}} - \eta_{1}) \leq \innp{u_{1}, u_{2}} (\innp{x,u_{2}} -\eta_{2}) \right\}, \label{eq:C2}\\
 	&C_{3}:= \scalemath{0.95}{\left\{x \in \mathcal{H} ~:~ \norm{u_{1}}^{2} (\innp{x,u_{2}} - \eta_{2}) > \innp{u_{1}, u_{2}} (\innp{x,u_{1}} -\eta_{1}), \norm{u_{2}}^{2} (\innp{x,u_{1}} - \eta_{1}) > \innp{u_{1}, u_{2}} (\innp{x,u_{2}} -\eta_{2})\right\}.} \label{eq:C3}
 	\end{align}
 \end{subequations}

\begin{fact} {\rm \cite[Proposition~29.23]{BC2017}} \label{fact:Projec:Intersect:halfspace}
	Let $x \in \mathcal{H}$. Then $W_{1} \cap W_{2} \neq \varnothing$ and 
	\begin{align*}
	\Pro_{W_{1} \cap W_{2}} x = x -\gamma_{1}u_{1} -\gamma_{2}u_{2},
	\end{align*}
	where exactly one of the following holds:
	\begin{enumerate}
		\item \label{fact:Projec:Intersect:halfspace:i} If $x \in W_{1} \cap W_{2}$, then $\gamma_{1}=\gamma_{2}=0$. 
		\item \label{fact:Projec:Intersect:halfspace:ii} If $x \in C_{1}$, then $\gamma_{1}=\frac{\innp{x,u_{1}} - \eta_{1}}{\norm{u_{1}}^{2}} >0$ and $\gamma_{2} =0$.
		\item \label{fact:Projec:Intersect:halfspace:iii} If $x \in C_{2}$, then $\gamma_{1}=0$ and $\gamma_{2} =\frac{\innp{x,u_{2}} - \eta_{2}}{\norm{u_{2}}^{2}} >0$.
		\item \label{fact:Projec:Intersect:halfspace:iv} If $x \in C_{3}$, then $\gamma_{1} =\frac{\norm{u_{2}}^{2} (\innp{x,u_{1}} - \eta_{1}) - \innp{u_{1}, u_{2}} (\innp{x,u_{2}} -\eta_{2})  }{\norm{u_{1}}^{2} \norm{u_{2}}^{2} -|\innp{u_{1},u_{2}}|^{2}} >0$, $
		\gamma_{2}  =\frac{\norm{u_{1}}^{2} (\innp{x,u_{2}} - \eta_{2}) - \innp{u_{1}, u_{2}} (\innp{x,u_{1}} -\eta_{1}) }{ \norm{u_{1}}^{2} \norm{u_{2}}^{2} -|\innp{u_{1},u_{2}}|^{2} }>0$.
	\end{enumerate}
\end{fact}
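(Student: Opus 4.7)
My plan is to verify the formula using the Karush--Kuhn--Tucker characterization of the projection onto the convex feasible set $\{y \in \mathcal{H} : \langle y,u_1\rangle \le \eta_1, \ \langle y,u_2\rangle \le \eta_2\}$, and then check the region conditions by direct case analysis.

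First I would establish nonemptiness. Since $u_1,u_2$ are linearly independent, \cref{lem:capMi} applied to $a_i=u_i, \xi_i=\eta_i$ gives $H_1 \cap H_2 \neq \varnothing$; any point in $H_1 \cap H_2$ sits in $W_1 \cap W_2$, so $W_1\cap W_2 \neq \varnothing$.

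Next I would write down the KKT characterization of $p := \Pro_{W_1 \cap W_2}x$. Since Slater's condition holds (interior is nonempty because $u_1,u_2$ are linearly independent, so we can find $y$ with $\langle y,u_i\rangle < \eta_i$), $p$ is characterized by the existence of multipliers $\gamma_1,\gamma_2 \ge 0$ such that
\begin{equation*}
x - p = \gamma_1 u_1 + \gamma_2 u_2, \qquad \langle p,u_i\rangle \le \eta_i, \qquad \gamma_i(\langle p,u_i\rangle - \eta_i)=0 \quad (i=1,2).
\end{equation*}
So the formula $\Pro_{W_1 \cap W_2}x = x-\gamma_1 u_1 -\gamma_2 u_2$ is immediate from the stationarity condition; the real content is identifying $\gamma_1,\gamma_2$ on each region.

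I would then split on which constraints are active.
\emph{Both inactive} ($\gamma_1=\gamma_2=0$) forces $p=x$, which is feasible exactly when $x\in W_1\cap W_2$, giving \cref{fact:Projec:Intersect:halfspace:i}.
\emph{Only the first active} ($\gamma_2=0,\ \gamma_1>0$) gives $p=x-\gamma_1 u_1$ with $\langle p,u_1\rangle=\eta_1$, from which $\gamma_1 = (\langle x,u_1\rangle - \eta_1)/\|u_1\|^2$. Positivity of $\gamma_1$ says $\langle x,u_1\rangle>\eta_1$, and feasibility $\langle p,u_2\rangle \le \eta_2$ rearranges (substitute $p$, clear by $\|u_1\|^2$) to $\|u_1\|^2(\langle x,u_2\rangle - \eta_2) \le \langle u_1,u_2\rangle(\langle x,u_1\rangle - \eta_1)$; this is exactly $x\in C_1$, giving \cref{fact:Projec:Intersect:halfspace:ii}. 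Swapping the roles of $u_1,u_2$ handles \cref{fact:Projec:Intersect:halfspace:iii}.
\emph{Both active} forces $p\in H_1\cap H_2$ and leads to the $2\times 2$ Gram system
\begin{equation*}
\begin{pmatrix} \|u_1\|^2 & \langle u_1,u_2\rangle \\ \langle u_1,u_2\rangle & \|u_2\|^2 \end{pmatrix}\!\!
\begin{pmatrix}\gamma_1\\ \gamma_2\end{pmatrix}
=\begin{pmatrix}\langle x,u_1\rangle - \eta_1\\ \langle x,u_2\rangle - \eta_2\end{pmatrix}.
\end{equation*}
Because $u_1,u_2$ are linearly independent, \cref{lem:basic:u1u2:LinerDep}\cref{item:lem:basic:u1u2:LinerDep:LID} gives the determinant $\|u_1\|^2\|u_2\|^2 - |\langle u_1,u_2\rangle|^2 > 0$, so Cramer's rule yields the closed form in \cref{fact:Projec:Intersect:halfspace:iv}, and the constraint $\gamma_1,\gamma_2 > 0$ is precisely $x \in C_3$.

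The last step is to check that $(W_1\cap W_2)$, $C_1$, $C_2$, $C_3$ cover $\mathcal{H}$ (and are disjoint): given $x\notin W_1\cap W_2$, at least one of $\langle x,u_i\rangle - \eta_i$ is positive, and a short sign analysis on the two comparisons $\|u_1\|^2(\langle x,u_2\rangle - \eta_2) \lessgtr \langle u_1,u_2\rangle(\langle x,u_1\rangle - \eta_1)$ and its symmetric counterpart exhausts the possibilities. The main obstacle, in my view, is not conceptual but bookkeeping: one must check that KKT in each subcase is internally consistent (e.g.\ that in $C_1$ the candidate $p=\Pro_{H_1}x$ actually lies in $W_2$, and that in $C_3$ both coordinates of the Cramer solution are strictly positive), and that the region inequalities translate faithfully back to the sign conditions on $\gamma_1,\gamma_2$. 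Once this partition and equivalence is verified, uniqueness of the KKT multipliers (from linear independence of $u_1,u_2$) completes the proof.
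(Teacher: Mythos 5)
Your proposal is correct, but note that the paper itself offers no proof of this statement: it is quoted verbatim as a Fact from \cite[Proposition~29.23]{BC2017}, so there is no internal argument to compare against. Your KKT reconstruction is essentially the argument of the cited source, and it is also exactly the strategy the paper later deploys for the companion result \cref{them:Formula:H1capW2} (projection onto $H_{1}\cap W_{2}$), where the stationarity, feasibility and complementary-slackness system is written out and the two-by-two Gram system is solved by invertibility of $G(u_{1},u_{2})$ via \cref{fact:Gram:inver}. Two small points where your sketch defers work that genuinely needs doing: first, the Slater point is cleanest if you invoke \cref{lem:capMi} with right-hand sides $\eta_{i}-1$ rather than appealing vaguely to linear independence; second, the claim that $W_{1}\cap W_{2}$, $C_{1}$, $C_{2}$, $C_{3}$ partition $\mathcal{H}$ is not automatic from the KKT case split and must be checked --- it does hold, and every subcase of the sign analysis ultimately reduces to the strict inequality $\norm{u_{1}}^{2}\norm{u_{2}}^{2}>\innp{u_{1},u_{2}}^{2}$ from \cref{lem:basic:u1u2:LinerDep}\cref{item:lem:basic:u1u2:LinerDep:LID}, so your closing remark identifies the right obstacle. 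The paper itself implicitly relies on this partition (it is restated as \cref{eq:H:Again} in the proof of \cref{lemma:W1W2H1H2:SetCharacter}), again without proof, so carrying out that bookkeeping is the only content your write-up would add beyond the citation.
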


\begin{lemma} \label{lem:PW1W2H1H2:IV}
	Let $x \in C_{3}$. Then $	\Pro_{W_{1} \cap W_{2}} x= \Pro_{H_{1} \cap H_{2}} x$.
\end{lemma}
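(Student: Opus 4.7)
The plan is to show that the point $y := \Pro_{W_{1} \cap W_{2}} x$ actually lies in $H_{1} \cap H_{2}$, and then invoke \cref{fact:AsubseteqB:Projection} to conclude.

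First I would verify the set-theoretic containment and nonemptiness needed to apply that fact. Since $H_{i} \subseteq W_{i}$ for $i \in \{1,2\}$, we have $H_{1} \cap H_{2} \subseteq W_{1} \cap W_{2}$. Moreover, because $u_{1}$ and $u_{2}$ are linearly independent (standing assumption of the subsection), \cref{lem:capMi} guarantees $H_{1} \cap H_{2} \neq \varnothing$. Both sets are nonempty closed convex subsets, so \cref{fact:AsubseteqB:Projection} reduces the lemma to checking that $\Pro_{W_{1} \cap W_{2}} x \in H_{1} \cap H_{2}$.

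Next I would use the explicit formula from \cref{fact:Projec:Intersect:halfspace}\cref{fact:Projec:Intersect:halfspace:iv}: since $x \in C_{3}$, we have $\Pro_{W_{1} \cap W_{2}} x = x - \gamma_{1} u_{1} - \gamma_{2} u_{2}$ with
\begin{align*}
\gamma_{1} = \frac{\norm{u_{2}}^{2} (\innp{x,u_{1}} - \eta_{1}) - \innp{u_{1},u_{2}}(\innp{x,u_{2}} - \eta_{2})}{\norm{u_{1}}^{2}\norm{u_{2}}^{2} - |\innp{u_{1},u_{2}}|^{2}}, \quad
\gamma_{2} = \frac{\norm{u_{1}}^{2}(\innp{x,u_{2}} - \eta_{2}) - \innp{u_{1},u_{2}}(\innp{x,u_{1}} - \eta_{1})}{\norm{u_{1}}^{2}\norm{u_{2}}^{2} - |\innp{u_{1},u_{2}}|^{2}}.
\end{align*}
Taking the inner product of $\Pro_{W_{1} \cap W_{2}} x$ with $u_{1}$ produces $\innp{x,u_{1}} - \gamma_{1}\norm{u_{1}}^{2} - \gamma_{2}\innp{u_{1},u_{2}}$. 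A short calculation (common denominator $\norm{u_{1}}^{2}\norm{u_{2}}^{2} - |\innp{u_{1},u_{2}}|^{2}$, which is strictly positive by \cref{lem:basic:u1u2:LinerDep}\cref{item:lem:basic:u1u2:LinerDep:LID}) shows that $\gamma_{1}\norm{u_{1}}^{2} + \gamma_{2}\innp{u_{1},u_{2}} = \innp{x,u_{1}} - \eta_{1}$, so $\innp{\Pro_{W_{1} \cap W_{2}} x, u_{1}} = \eta_{1}$. A symmetric computation using the inner product with $u_{2}$ gives $\gamma_{1}\innp{u_{1},u_{2}} + \gamma_{2}\norm{u_{2}}^{2} = \innp{x,u_{2}} - \eta_{2}$, and hence $\innp{\Pro_{W_{1} \cap W_{2}} x, u_{2}} = \eta_{2}$. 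Therefore $\Pro_{W_{1} \cap W_{2}} x \in H_{1} \cap H_{2}$.

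The main (and essentially only) obstacle is the algebraic bookkeeping showing $\gamma_{1}\norm{u_{1}}^{2} + \gamma_{2}\innp{u_{1},u_{2}} = \innp{x,u_{1}} - \eta_{1}$ (and the analogous identity for $u_{2}$); this is entirely routine because the cross terms $\innp{u_{1},u_{2}}$ cancel and the remaining numerator telescopes to $(\norm{u_{1}}^{2}\norm{u_{2}}^{2} - |\innp{u_{1},u_{2}}|^{2})(\innp{x,u_{1}} - \eta_{1})$, matching the denominator. Once this is in hand, \cref{fact:AsubseteqB:Projection} immediately yields $\Pro_{W_{1} \cap W_{2}} x = \Pro_{H_{1} \cap H_{2}} x$, completing the proof.
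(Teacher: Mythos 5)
Your proposal is correct and follows essentially the same route as the paper: both reduce the lemma to showing $\Pro_{W_{1}\cap W_{2}}x \in H_{1}\cap H_{2}$ and then invoke \cref{fact:AsubseteqB:Projection}. The only difference is that the paper certifies the membership via the complementary-slackness identities $\gamma_{i}\left(\innp{\Pro_{W_{1}\cap W_{2}}x,u_{i}}-\eta_{i}\right)=0$ (quoted from the proof of \cite[Proposition~29.23]{BC2017}) together with $\gamma_{1}>0$, $\gamma_{2}>0$, whereas you verify $\innp{\Pro_{W_{1}\cap W_{2}}x,u_{i}}=\eta_{i}$ by a direct (and correct) algebraic computation with the explicit $\gamma_{1},\gamma_{2}$ — a self-contained but equivalent justification.
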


\begin{proof}
	 As a consequence of  \cref{fact:Projec:Intersect:halfspace}\cref{fact:Projec:Intersect:halfspace:iv},  
	\begin{align}\label{eq:cor:PW1W2H1H2:IV:ProV1V1}
	\Pro_{W_{1} \cap W_{2}}x =x -\gamma_{1}u_{1} -\gamma_{2}u_{2}.
	\end{align}
	where $\gamma_{1} = \frac{\norm{u_{2}}^{2} (\innp{x,u_{1}} - \eta_{1}) - \innp{u_{1}, u_{2}} (\innp{x,u_{2}} -\eta_{2})  }{\norm{u_{1}}^{2} \norm{u_{2}}^{2} -|\innp{u_{1},u_{2}}|^{2}} >0$, and  $\gamma_{2} = \frac{\norm{u_{1}}^{2} (\innp{x,u_{2}} - \eta_{2}) - \innp{u_{1}, u_{2}} (\innp{x,u_{1}} -\eta_{1}) }{ \norm{u_{1}}^{2} \norm{u_{2}}^{2} -|\innp{u_{1},u_{2}}|^{2} } >0$.
	According to equations (29.36) and (29.37) in the proof of \cite[Proposition~29.23]{BC2017} which is the \cref{fact:Projec:Intersect:halfspace}, 
	\begin{subequations}
			\begin{align}  
		0&=\gamma_{1} \left( \innp{x- \gamma_{1}u_{1}- \gamma_{2}u_{2},u_{1}} -\eta_{1}\right) \stackrel{\cref{eq:cor:PW1W2H1H2:IV:ProV1V1}}{=}\gamma_{1} \left( \innp{\Pro_{W_{1} \cap W_{2}}x, u_{1}} -\eta_{1}\right),\label{eq:cor:PW1W2H1H2:IV:H1}\\
		0&=\gamma_{2} \left( \innp{x- \gamma_{1}u_{1}-\gamma_{2}u_{2},u_{2}} -\eta_{2}\right)\stackrel{\cref{eq:cor:PW1W2H1H2:IV:ProV1V1}}{=}\gamma_{2} \left( \innp{\Pro_{W_{1} \cap W_{2}}x, u_{2}} -\eta_{2}\right). \label{eq:cor:PW1W2H1H2:IV:H2}
		\end{align}
	\end{subequations}
Bearing in mind that $\gamma_{1}>0$ and $\gamma_{2} >0$, and that \cref{eq:W12},  we know that \cref{eq:cor:PW1W2H1H2:IV:H1} and \cref{eq:cor:PW1W2H1H2:IV:H2} imply that $\Pro_{W_{1} \cap W_{2}}x \in H_{1} \cap H_{2}$.
	Hence, apply \cref{fact:AsubseteqB:Projection} with $A=H_{1} \cap H_{2}$ and $B= W_{1} \cap W_{2}$ to obtain that $\Pro_{W_{1} \cap W_{2}}x=\Pro_{H_{1} \cap H_{2}}x $.
\end{proof}

\begin{remark} \label{rem:Ci:P}
	\begin{enumerate}
		\item \label{rem:Ci:P:C} In view of  \cref{eq:W12} and \cref{cor:PH2PH1}\cref{cor:PH2PH1:notin:EQ}, we know that $C_{1}=\{x \in \mathcal{H}~:~x \notin W_{1} \text{ and } \Pro_{H_{1}}x \in W_{2} \}$,  $C_{2}=\{x \in \mathcal{H}~:~x \notin W_{2} \text{ and } \Pro_{H_{2}}x \in W_{1} \}$, and $C_{3}=\{x \in \mathcal{H}~:~\Pro_{H_{1}}x \notin W_{2}  \text{ and } \Pro_{H_{2}}x \notin W_{1} \}$.
		\item \label{rem:Ci:P:Pformula} By \cref{fact:Projec:Intersect:halfspace}, \cref{fact:Projec:Hyperplane}, and \cref{lem:PW1W2H1H2:IV}, $(x \in C_{1})$ $\Pro_{W_{1} \cap W_{2}}x=\Pro_{H_{1}}x  $, $(x \in C_{2})$ $\Pro_{W_{1} \cap W_{2}}x=\Pro_{H_{2}}x  $, and $(x \in C_{3})$ $\Pro_{W_{1} \cap W_{2}}x=\Pro_{H_{1} \cap H_{2}}x $.
	\end{enumerate}
\end{remark}

\begin{lemma} \label{lemma:W1W2H1H2:SetCharacter}
	\begin{enumerate}
		\item  \label{lemma:W1W2H1H2:SetCharacter:<} Suppose that $\innp{u_{1},u_{2}} <0$.  Then the following statements hold:
		\begin{enumerate}
			\item \label{lemma:W1W2H1H2:SetCharacter:W1CINTW2} $C_{1} \subseteq  W^{c}_{1} \cap \inte(W_{2})$. 
			\item \label{lemma:W1W2H1H2:SetCharacter:W2CINTW1}$C_{2} \subseteq  W^{c}_{2} \cap \inte(W_{1})$. 
			\item \label{lemma:W1W2H1H2:SetCharacter:Remaining} $ \left( W^{c}_{1} \cap H_{2} \right) \cup \left( W^{c}_{2} \cap H_{1} \right) \cup \left( W^{c}_{1} \cap W^{c}_{2} \right) \subseteq  C_{3}$.
		\end{enumerate}
		\item \label{lemma:W1W2H1H2:SetCharacter:geq} Suppose that $\innp{u_{1},u_{2}} \geq 0$.   Then the following statements hold:
		\begin{enumerate}
			\item \label{lemma:W1W2H1H2:SetCharacter:geq:C1} $W^{c}_{1}\cap W_{2} \subseteq C_{1}$.
			\item \label{lemma:W1W2H1H2:SetCharacter:geq:C2} $W_{1}\cap W^{c}_{2} \subseteq C_{2}$.
			\item \label{lemma:W1W2H1H2:SetCharacter:geq:C3} $C_{3} \subseteq W^{c}_{1}\cap W^{c}_{2}$. In particular, if $ \innp{u_{1},u_{2}} =0$, then $C_{3} = W^{c}_{1}\cap W^{c}_{2}$. 
		\end{enumerate}
	\end{enumerate}
	
\end{lemma}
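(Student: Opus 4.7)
The plan is to verify each inclusion by directly exploiting the sign information in the defining inequalities of $C_1$, $C_2$, $C_3$, combined with the strict Cauchy--Schwarz inequality $|\innp{u_{1},u_{2}}|^{2} < \norm{u_{1}}^{2}\norm{u_{2}}^{2}$ coming from the linear independence of $u_{1},u_{2}$ via \cref{lem:basic:u1u2:LinerDep}\cref{item:lem:basic:u1u2:LinerDep:LID}. Throughout, it is convenient to set $a := \innp{x,u_{1}} - \eta_{1}$, $b := \innp{x,u_{2}} - \eta_{2}$, and $c := \innp{u_{1},u_{2}}$, so that the defining inequalities of $C_{1},C_{2},C_{3}$ become inequalities among $a,b,c$.

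For part \cref{lemma:W1W2H1H2:SetCharacter:<}, assume $c<0$. For \cref{lemma:W1W2H1H2:SetCharacter:W1CINTW2}: if $x\in C_{1}$ then $a>0$, so $ca<0$; the $C_{1}$-inequality $\norm{u_{1}}^{2} b \le c a$ then forces $b<0$, i.e.\ $x\in W_{1}^{c}\cap\inte W_{2}$. Part \cref{lemma:W1W2H1H2:SetCharacter:W2CINTW1} is symmetric. For \cref{lemma:W1W2H1H2:SetCharacter:Remaining}, one checks the three pieces of the union separately: if $x\in W_{1}^{c}\cap H_{2}$ then $a>0$, $b=0$, so $\norm{u_{1}}^{2} b = 0 > ca$ and $\norm{u_{2}}^{2} a > 0 = cb$, hence $x\in C_{3}$; the case $x\in W_{2}^{c}\cap H_{1}$ is symmetric; and if $x\in W_{1}^{c}\cap W_{2}^{c}$ then $a,b>0$ while $ca,cb<0$, giving both $C_{3}$-inequalities at once.

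For part \cref{lemma:W1W2H1H2:SetCharacter:geq}, assume $c\ge 0$. Parts \cref{lemma:W1W2H1H2:SetCharacter:geq:C1} and \cref{lemma:W1W2H1H2:SetCharacter:geq:C2} are immediate sign checks: if $x\in W_{1}^{c}\cap W_{2}$ then $a>0$ and $b\le 0$, so $\norm{u_{1}}^{2} b \le 0 \le ca$, which places $x$ in $C_{1}$; symmetrically for \cref{lemma:W1W2H1H2:SetCharacter:geq:C2}. The main obstacle is \cref{lemma:W1W2H1H2:SetCharacter:geq:C3}. Let $x\in C_{3}$, so $\norm{u_{1}}^{2} b > ca$ and $\norm{u_{2}}^{2} a > cb$. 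I will argue $a>0$; the argument for $b>0$ is symmetric. Suppose toward contradiction that $a\le 0$. If $c=0$, the second $C_{3}$-inequality reads $\norm{u_{2}}^{2} a > 0$, contradicting $a\le 0$. If $c>0$, then $\norm{u_{2}}^{2} a \le 0$ forces $cb<0$, hence $b<0$. Multiply the $C_{3}$-inequalities by the positive scalars $c$ and $\norm{u_{2}}^{2}$ respectively to obtain $c\norm{u_{1}}^{2} b > c^{2}a$ and $\norm{u_{2}}^{2} \norm{u_{1}}^{2} b > \norm{u_{2}}^{2} c a \ge c^{2} b$ (using $\norm{u_{2}}^{2} a > cb$ multiplied by $c>0$). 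This yields $(\norm{u_{1}}^{2}\norm{u_{2}}^{2} - c^{2}) b > 0$, which combined with $b<0$ gives $c^{2} > \norm{u_{1}}^{2}\norm{u_{2}}^{2}$, contradicting the strict Cauchy--Schwarz inequality. Therefore $a>0$, and analogously $b>0$, so $x\in W_{1}^{c}\cap W_{2}^{c}$.

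Finally, for the ``in particular'' claim, when $c=0$ the reverse inclusion $W_{1}^{c}\cap W_{2}^{c}\subseteq C_{3}$ is immediate: $a,b>0$ make both $C_{3}$-inequalities reduce to $\norm{u_{1}}^{2} b > 0$ and $\norm{u_{2}}^{2} a > 0$. The main technical step is the contradiction argument in \cref{lemma:W1W2H1H2:SetCharacter:geq:C3}; everything else is bookkeeping with signs.
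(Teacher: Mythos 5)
Your proof is correct, and items \cref{lemma:W1W2H1H2:SetCharacter:W1CINTW2}, \cref{lemma:W1W2H1H2:SetCharacter:W2CINTW1}, \cref{lemma:W1W2H1H2:SetCharacter:geq:C1}, \cref{lemma:W1W2H1H2:SetCharacter:geq:C2} and the ``in particular'' claim match the paper's argument essentially verbatim (sign checks on $a$, $b$, $c$). Where you genuinely diverge is in \cref{lemma:W1W2H1H2:SetCharacter:Remaining} and \cref{lemma:W1W2H1H2:SetCharacter:geq:C3}. The paper proves both of these \emph{indirectly}: it invokes \cref{fact:Projec:Intersect:halfspace} to get that $W_{1}\cap W_{2}$, $C_{1}$, $C_{2}$, $C_{3}$ are pairwise disjoint and cover $\mathcal{H}$, writes $\mathcal{H}$ as the union of the regions in \cref{eq:H:a}--\cref{eq:H:b}, and then deduces the two inclusions by elimination from the already-established items. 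You instead verify the defining inequalities of $C_{3}$ directly on each piece of $\left( W^{c}_{1} \cap H_{2} \right) \cup \left( W^{c}_{2} \cap H_{1} \right) \cup \left( W^{c}_{1} \cap W^{c}_{2} \right)$, and for \cref{lemma:W1W2H1H2:SetCharacter:geq:C3} you run a contradiction argument: assuming $a\le 0$ you chain the two $C_{3}$-inequalities (multiplied by $\norm{u_{2}}^{2}$ and by $c$) to get $(\norm{u_{1}}^{2}\norm{u_{2}}^{2}-c^{2})b>0$ with $b<0$, contradicting the strict Cauchy--Schwarz inequality from \cref{lem:basic:u1u2:LinerDep}\cref{item:lem:basic:u1u2:LinerDep:LID}; this is sound (the auxiliary inequality $c\norm{u_{1}}^{2}b>c^{2}a$ you derive is never actually used, but that is harmless). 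The trade-off: the paper's route is shorter once one accepts the partition fact imported from \cite[Proposition~29.23]{BC2017}, while yours is self-contained at the level of elementary inequalities and does not need the pairwise disjointness of the $C_{i}$ at all. Both are valid proofs.
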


\begin{proof}
	Note that
	\begin{subequations} \label{eq:H}
		\begin{align}
		\mathcal{H} 
		& =  \left(W_{1} \cap W_{2} \right) \cup \left(W_{1} \cap W^{c}_{2} \right)  \cup \left(W^{c}_{1} \cap W_{2} \right) \cup  \left(W^{c}_{1} \cap W^{c}_{2} \right)  \label{eq:H:a}\\
		&= \left(W_{1} \cap W_{2} \right) \cup \left(\inte W_{1} \cap W^{c}_{2} \right) \cup \left(H_{1} \cap W^{c}_{2} \right) \cup \left(W^{c}_{1} \cap \inte W_{2} \right) \cup \left(W^{c}_{1} \cap H_{2} \right) \cup  \left(W^{c}_{1} \cap W^{c}_{2} \right).  \label{eq:H:b}
		\end{align}		
	\end{subequations}
	
	On the other hand,  	\cref{fact:Projec:Intersect:halfspace} implies that
	\begin{align} \label{eq:H:Again}
	\mathcal{H} = \left(W_{1} \cap W_{2} \right) \cup C_{1} \cup C_{2} \cup C_{3},
	\end{align}
	and the sets $W_{1} \cap W_{2} $, $C_{1}$, $C_{2}$ and $C_{3}$ are pairwise disjoint.
	
	\cref{lemma:W1W2H1H2:SetCharacter:W1CINTW2}: Let $x \in C_{1}$.  Clearly, $\innp{x,u_{1}} > \eta_{1} $  is equivalent to $x \in W^{c}_{1}$. Moreover,   $\innp{u_{1},u_{2}} <0$ yields $ \innp{u_{1}, u_{2}} (\innp{x,u_{1}} -\eta_{1}) <0$. Hence, $\norm{u_{1}}^{2} (\innp{x,u_{2}} - \eta_{2}) \leq \innp{u_{1}, u_{2}} (\innp{x,u_{1}} -\eta_{1})$ implies that $\innp{x,u_{2}} - \eta_{2} <0$, i.e., $x \in \inte (W_{2})$. Hence, $C_{1} \subseteq  W^{c}_{1} \cap \inte(W_{2})$.   
	
	\cref{lemma:W1W2H1H2:SetCharacter:W2CINTW1}: The proof is similar to the proof of \cref{lemma:W1W2H1H2:SetCharacter:W1CINTW2}.

	\cref{lemma:W1W2H1H2:SetCharacter:Remaining}: 
	This is from 	\cref{lemma:W1W2H1H2:SetCharacter:W1CINTW2}, \cref{lemma:W1W2H1H2:SetCharacter:W2CINTW1}, \cref{eq:H:b} and \cref{eq:H:Again}.

\cref{lemma:W1W2H1H2:SetCharacter:geq:C1}: Let $x \in W^{c}_{1}\cap W_{2}$. Then $\innp{x,u_{1} }>\eta_{1}$ and $ \innp{x,u_{2} }\leq \eta_{2}$ are from \cref{eq:W12}. Moreover,   $\innp{u_{1},u_{2}} \geq 0$ leads to $\norm{u_{1}}^{2} (\innp{x,u_{2}} - \eta_{2}) \leq 0 \leq \innp{u_{1}, u_{2}} (\innp{x,u_{1}} -\eta_{1}) $, which, due to  \cref{eq:C1}, implies $x \in C_{1}$. Hence, $W^{c}_{1}\cap W_{2} \subseteq  C_{1}$.

\cref{lemma:W1W2H1H2:SetCharacter:geq:C2}: By analogous proof of \cref{lemma:W1W2H1H2:SetCharacter:geq:C1}, we know that \cref{lemma:W1W2H1H2:SetCharacter:geq:C2} holds.

\cref{lemma:W1W2H1H2:SetCharacter:geq:C3}: Combine \cref{lemma:W1W2H1H2:SetCharacter:geq:C1}, \cref{lemma:W1W2H1H2:SetCharacter:geq:C2}, \cref{eq:H:a} and \cref{eq:H:Again} to obtain that $C_{3} \subseteq W^{c}_{1}\cap W^{c}_{2}$.  Suppose that $\innp{u_{1}, u_{2}}=0$. Let $x \in W^{c}_{1}\cap W^{c}_{2}$. Then by \cref{eq:W12}, $\norm{u_{1}}^{2} (\innp{x,u_{2}} - \eta_{2}) > 0=\innp{u_{1}, u_{2}} (\innp{x,u_{1}} -\eta_{1}) $ and $\norm{u_{2}}^{2} (\innp{x,u_{1}} - \eta_{1}) > 0=\innp{u_{1}, u_{2}} (\innp{x,u_{2}} -\eta_{2})$, which, by \cref{eq:C3}, implies that $x \in C_{3}$. Therefore, the last assertion in \cref{lemma:W1W2H1H2:SetCharacter:geq:C3} holds.  
\end{proof}

\begin{lemma} \label{lem:PH2LEQPH1H2}
	Suppose that   $\innp{u_{1},u_{2}} <0$. 
	Let $x \in C_{3}$.  The following hold:
	\begin{enumerate}
		\item \label{lem:PH2LEQPH1H2:W1} Assume that $x \in W_{1}$. Then 
		$
		\norm{\Pro_{H_{2}}x - \Pro_{H_{1} \cap H_{2}}x } \leq \norm{ \Pro_{H_{2}}\Pro_{H_{1}}x  - \Pro_{H_{1} \cap H_{2}}x  }.
		$
		\item \label{lem:PH2LEQPH1H2:W1C} Assume that $x \in W^{c}_{1}$. Then
		\begin{align*}
		\Pro_{W_{2}}\Pro_{W_{1}} x = \Pro_{W_{2}}\Pro_{H_{1}} x =\Pro_{H_{2}}\Pro_{H_{1}} x \in C_{3} \text{ and } \Pro_{W_{1} \cap W_{2}} \Pro_{H_{2}}\Pro_{H_{1}}x  = \Pro_{ H_{1} \cap H_{2}} \Pro_{H_{2}}\Pro_{H_{1}}x = \Pro_{ H_{1} \cap H_{2}}x.
	\end{align*}
	\end{enumerate}
	
\end{lemma}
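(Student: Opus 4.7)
The plan for \cref{lem:PH2LEQPH1H2} is to reduce both parts to explicit computations based on the formula of \cref{cor:PH2PH1}\cref{cor:PH2PH1:H2H1} for $\Pro_{H_{2}}\Pro_{H_{1}}x$, exploiting the defining inequalities of $C_{3}$ in \cref{eq:C3} together with $\innp{u_{1},u_{2}} < 0$. Throughout, set $v := u_{1} - \frac{\innp{u_{1},u_{2}}}{\norm{u_{2}}^{2}}u_{2}$; then $\innp{v,u_{2}} = 0$ and, since $u_{1}, u_{2}$ are linearly independent, the line $\mathbb{R}v$ coincides with the one-dimensional subspace $\ker u_{2} \cap \spn\{u_{1},u_{2}\}$, which is precisely the direction in $H_{2}$ perpendicular to the direction of $H_{1} \cap H_{2}$.

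For \cref{lem:PH2LEQPH1H2:W1}, the starting observation is that $x \in W_{1} \cap C_{3}$ and $\innp{u_{1},u_{2}} < 0$ force $\innp{x,u_{2}} > \eta_{2}$: the second inequality in \cref{eq:C3} gives $0 \geq \norm{u_{2}}^{2}(\innp{x,u_{1}}-\eta_{1}) > \innp{u_{1},u_{2}}(\innp{x,u_{2}}-\eta_{2})$, so $\innp{x,u_{2}} - \eta_{2} > 0$. Next write $\Pro_{H_{1}}x = x + \alpha u_{1}$ with $\alpha := (\eta_{1} - \innp{x,u_{1}})/\norm{u_{1}}^{2} \geq 0$. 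A direct calculation yields $\Pro_{H_{2}}\Pro_{H_{1}}x - \Pro_{H_{2}}x = \alpha v$. Since $H_{1} \cap H_{2} \subseteq H_{2}$ and $H_{1} \cap H_{2} \neq \varnothing$ by \cref{lem:capMi}, \cref{fact:ExchangeProj} gives $\Pro_{H_{1} \cap H_{2}}\Pro_{H_{2}} = \Pro_{H_{1} \cap H_{2}}$, whence both $\Pro_{H_{2}}x$ and $\Pro_{H_{2}}\Pro_{H_{1}}x$ share the common image $\Pro_{H_{1} \cap H_{2}}x$. Consequently, $\Pro_{H_{2}}x - \Pro_{H_{1} \cap H_{2}}x = tv$ and $\Pro_{H_{2}}\Pro_{H_{1}}x - \Pro_{H_{1} \cap H_{2}}x = sv$ for scalars $s, t$ with $s - t = \alpha \geq 0$. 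Imposing $\innp{\Pro_{H_{1} \cap H_{2}}x, u_{1}} = \eta_{1}$ in the expansion $\Pro_{H_{1} \cap H_{2}}x = \Pro_{H_{2}}x - tv$ gives a closed form for $t$ whose sign matches that of $\norm{u_{2}}^{2}(\innp{x,u_{1}}-\eta_{1}) - \innp{u_{1},u_{2}}(\innp{x,u_{2}}-\eta_{2})$; by the second $C_{3}$-inequality this quantity is positive, so $t > 0$. Then $s = t + \alpha \geq t > 0$ yields $|s| \geq |t|$, i.e.\ the desired norm inequality.

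For \cref{lem:PH2LEQPH1H2:W1C}, the identity $\Pro_{W_{2}}\Pro_{W_{1}}x = \Pro_{W_{2}}\Pro_{H_{1}}x$ follows from $x \in W_{1}^{c}$ and \cref{remark:FactWFactH}, while $\Pro_{W_{2}}\Pro_{H_{1}}x = \Pro_{H_{2}}\Pro_{H_{1}}x$ follows because $x \in C_{3}$ together with \cref{cor:PH2PH1}\cref{cor:PH2PH1:notin:EQ} give $\Pro_{H_{1}}x \notin W_{2}$, to which I apply \cref{remark:FactWFactH} again. Set $y := \Pro_{H_{2}}\Pro_{H_{1}}x \in H_{2}$; since $\innp{y,u_{2}} - \eta_{2} = 0$, both defining inequalities of $C_{3}$ for $y$ reduce to $\innp{y,u_{1}} > \eta_{1}$. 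Using \cref{cor:PH2PH1}\cref{cor:PH2PH1:H2H1} one checks
\begin{align*}
\eta_{1} - \innp{y,u_{1}} = -\frac{\innp{u_{2},u_{1}}}{\norm{u_{1}}^{2}\norm{u_{2}}^{2}}\bigl[(\eta_{2}-\innp{x,u_{2}})\norm{u_{1}}^{2} - (\eta_{1}-\innp{x,u_{1}})\innp{u_{1},u_{2}}\bigr],
\end{align*}
in which the bracketed factor is negative by \cref{cor:PH2PH1}\cref{cor:PH2PH1:notin:EQ} (since $\Pro_{H_{1}}x \notin W_{2}$) and $\innp{u_{1},u_{2}} < 0$, so the right-hand side is negative, i.e.\ $y \in W_{1}^{c}$. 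Then $y \in W_{1}^{c} \cap H_{2} \subseteq C_{3}$ by \cref{lemma:W1W2H1H2:SetCharacter}\cref{lemma:W1W2H1H2:SetCharacter:Remaining}, and \cref{rem:Ci:P}\cref{rem:Ci:P:Pformula} gives $\Pro_{W_{1} \cap W_{2}}y = \Pro_{H_{1} \cap H_{2}}y$. Finally, two applications of \cref{fact:ExchangeProj}---first with $(H_{1} \cap H_{2}, H_{2})$, then with $(H_{1} \cap H_{2}, H_{1})$---collapse $\Pro_{H_{1} \cap H_{2}}\Pro_{H_{2}}\Pro_{H_{1}}x$ to $\Pro_{H_{1} \cap H_{2}}\Pro_{H_{1}}x$ and then to $\Pro_{H_{1} \cap H_{2}}x$.

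The main obstacle is the sign analysis in \cref{lem:PH2LEQPH1H2:W1}: once \cref{fact:ExchangeProj} forces both $\Pro_{H_{2}}x - \Pro_{H_{1} \cap H_{2}}x$ and $\Pro_{H_{2}}\Pro_{H_{1}}x - \Pro_{H_{1} \cap H_{2}}x$ to lie on the line $\mathbb{R}v$, the inequality $|s| \geq |t|$ hinges on showing $t > 0$, which is exactly what the $C_{3}$-inequality delivers via the explicit form of $t$. Everything else in both parts is a routine application of the previously established projection formulas and of \cref{fact:ExchangeProj}.
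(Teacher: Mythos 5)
Your argument for \cref{lem:PH2LEQPH1H2:W1C} is essentially the paper's own proof: the same reduction via \cref{remark:FactWFactH} and \cref{cor:PH2PH1}\cref{cor:PH2PH1:notin:EQ}, the same computation of $\eta_{1}-\innp{\Pro_{H_{2}}\Pro_{H_{1}}x,u_{1}}$ with the same sign analysis, the same membership chain $W^{c}_{1}\cap H_{2}\subseteq C_{3}$ from \cref{lemma:W1W2H1H2:SetCharacter}\cref{lemma:W1W2H1H2:SetCharacter:Remaining}, and the same final appeal to \cref{lem:PW1W2H1H2:IV} (equivalently \cref{rem:Ci:P}\cref{rem:Ci:P:Pformula}) and \cref{fact:ExchangeProj}. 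For \cref{lem:PH2LEQPH1H2:W1}, however, you take a genuinely different route. The paper converts the norm inequality into $\norm{x-\Pro_{H_{1}}x}^{2}+\norm{\Pro_{H_{1}}x-\Pro_{H_{2}}\Pro_{H_{1}}x}^{2}\leq\norm{x-\Pro_{H_{2}}x}^{2}$ via two Pythagorean identities from \cite{BOyW2018Proper}, expands everything with the explicit projection formulas, and verifies the resulting algebraic inequality term by term using both $C_{3}$-inequalities together with $x\in W_{1}$ and $\innp{u_{1},u_{2}}<0$. You instead place $\Pro_{H_{2}}x$, $\Pro_{H_{2}}\Pro_{H_{1}}x$, and $\Pro_{H_{1}\cap H_{2}}x$ on a common line through $\Pro_{H_{1}\cap H_{2}}x$ in the direction $v=u_{1}-\frac{\innp{u_{1},u_{2}}}{\norm{u_{2}}^{2}}u_{2}$ and compare signed coordinates: $s=t+\alpha$ with $\alpha\geq 0$ from $x\in W_{1}$, and $t>0$ because $t\innp{v,u_{1}}$ equals $\frac{1}{\norm{u_{2}}^{2}}\left(\norm{u_{2}}^{2}(\innp{x,u_{1}}-\eta_{1})-\innp{u_{1},u_{2}}(\innp{x,u_{2}}-\eta_{2})\right)$ with $\innp{v,u_{1}}>0$, so the second $C_{3}$-inequality alone does the work. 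I have checked these computations and they are correct; your argument is shorter, needs only one of the two $C_{3}$-inequalities, and makes the geometry transparent. Its only cost is that the key structural fact --- for $z\in H_{2}$ the residual $z-\Pro_{H_{1}\cap H_{2}}z$ lies in $\ker u_{2}\cap(\ker u_{1}\cap\ker u_{2})^{\perp}=\spn\{v\}$ --- is asserted rather than proved; it is standard, but you should spend a line on it (e.g.\ verify directly from \cref{prop:proj:inters:hyperplanes} that $\Pro_{H_{1}\cap H_{2}}z-z\in\spn\{v\}$ whenever $z\in H_{2}$, or cite the characterization of projections onto affine subspaces), and you should also record $H_{1}\cap H_{2}\neq\varnothing$ (from \cref{lem:capMi}, as you note) before invoking \cref{fact:ExchangeProj}.
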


\begin{proof}

	\cref{lem:PH2LEQPH1H2:W1}: 
	Using \cref{fact:Projec:Hyperplane}, we know that
		\begin{align} \label{eq:lem:PH2LEQPH1H2:H1H2}
	 \Pro_{H_{1}}x  = x+ \frac{\eta_{1} - \innp{x,u_{1}}}{\norm{u_{1}}^{2}}u_{1},~ \Pro_{H_{2}}x  = x+ \frac{\eta_{2} - \innp{x,u_{2}}}{\norm{u_{2}}^{2}}u_{2}, \text{ and }
\Pro_{H_{2}}\Pro_{H_{1}}x  =\Pro_{H_{1}}x + \frac{\eta_{2} - \innp{ \Pro_{H_{1}}x ,u_{2}}}{\norm{u_{2}}^{2}}u_{2},
		\end{align}
which implies that
	\begin{align} \label{eq:lem:PH2LEQPH1H2:NORM:H12}
	\norm{x - \Pro_{H_{1}}x }^{2}= \frac{(\eta_{1} - \innp{x,u_{1}} )^{2}}{\norm{u_{1}}^{2}}, \quad \text{and} \quad
	\norm{x - \Pro_{H_{2}}x }^{2} = \frac{(\eta_{2} - \innp{x,u_{2}} )^{2}}{\norm{u_{2}}^{2}}.  
	\end{align}
Apply the third and the first identities in \cref{eq:lem:PH2LEQPH1H2:H1H2} to the following first and the second equations, respectively, to obtain that 
		\begin{align}\label{eq:lem:PH2LEQPH1H2:NORM:H1:H2H1}
		\norm{\Pro_{H_{1}}x -\Pro_{H_{2}}\Pro_{H_{1}}x}^{2}  =\frac{(\eta_{2} - \innp{\Pro_{H_{1}}x,u_{2}} )^{2}}{\norm{u_{2}}^{2}}=  \frac{1}{ \norm{u_{1}}^{4} \norm{u_{2}}^{2}}  \left( \norm{u_{1}}^{2} ( \eta_{2} - \innp{x,u_{2}} )  - \innp{u_{1},u_{2}} (\eta_{1} - \innp{x,u_{1}}) \right)^{2}.
		\end{align}
	On the one hand, 
	\begin{subequations} 
		\begin{align}
		&\norm{\Pro_{H_{2}}x - \Pro_{H_{1} \cap H_{2}}x } \leq \norm{ \Pro_{H_{2}}\Pro_{H_{1}}x  - \Pro_{H_{1} \cap H_{2}}x  } \label{eq:cor:PH2LEQPH1H2:last}\\
		\Leftrightarrow & \norm{x - \Pro_{H_{1} \cap H_{2}}x}^{2} - \norm{ x - \Pro_{ H_{2}}x}^{2} \leq \norm{\Pro_{H_{1}}x -\Pro_{H_{1} \cap H_{2}}x}^{2} - \norm{\Pro_{H_{1}}x -\Pro_{H_{2}}\Pro_{H_{1}}x}^{2} \nonumber\\
		\Leftrightarrow & \norm{x - \Pro_{H_{1} \cap H_{2}}x}^{2} - \norm{ x - \Pro_{ H_{2}}x}^{2} \leq \norm{x -\Pro_{H_{1} \cap H_{2}}x}^{2}  - \norm{x - \Pro_{H_{1}}x }^{2} - \norm{\Pro_{H_{1}}x -\Pro_{H_{2}}\Pro_{H_{1}}x}^{2}\nonumber\\
		\Leftrightarrow & \norm{x - \Pro_{H_{1}}x }^{2} +  \norm{\Pro_{H_{1}}x -\Pro_{H_{2}}\Pro_{H_{1}}x}^{2}
		\leq \norm{ x - \Pro_{ H_{2}}x}^{2}\nonumber\\
		\Leftrightarrow & \frac{(\eta_{1} - \innp{x,u_{1}} )^{2}}{\norm{u_{1}}^{2}} +  \frac{1}{ \norm{u_{1}}^{4} \norm{u_{2}}^{2}}  \left( \norm{u_{1}}^{2} ( \eta_{2} - \innp{x,u_{2}} )  - \innp{u_{1},u_{2}} (\eta_{1} - \innp{x,u_{1}}) \right)^{2} \leq  \frac{(\eta_{2} - \innp{x,u_{2}} )^{2}}{\norm{u_{2}}^{2}} \nonumber\\
		\Leftrightarrow &  \norm{u_{1}}^{2} \norm{u_{2}}^{2} (\eta_{1} - \innp{x,u_{1}} )^{2} + \left( \norm{u_{1}}^{2} ( \eta_{2} - \innp{x,u_{2}} )  - \innp{u_{1},u_{2}} (\eta_{1} - \innp{x,u_{1}}) \right)^{2} \leq  \norm{u_{1}}^{4} (\eta_{2} - \innp{x,u_{2}} )^{2} \nonumber\\
		\Leftrightarrow &  \norm{u_{1}}^{2} \norm{u_{2}}^{2} (\eta_{1} - \innp{x,u_{1}} )^{2} + \innp{u_{1},u_{2}}^{2} (\eta_{1} - \innp{x,u_{1}})^{2} \leq 2 \norm{u_{1}}^{2} ( \eta_{2} - \innp{x,u_{2}} )  \innp{u_{1},u_{2}} (\eta_{1} - \innp{x,u_{1}})\nonumber\\
		\Leftrightarrow &    \norm{u_{1}}^{2} (\eta_{1} - \innp{x,u_{1}})  \left( \norm{u_{2}}^{2} (\eta_{1} - \innp{x,u_{1}} ) -  \innp{u_{1},u_{2}} ( \eta_{2} - \innp{x,u_{2}} ) \right) \nonumber\\
		& + \innp{u_{1},u_{2}} (\eta_{1} - \innp{x,u_{1}}) \left( \innp{u_{1},u_{2}} (\eta_{1} - \innp{x,u_{1}})  -  \norm{u_{1}}^{2} ( \eta_{2} - \innp{x,u_{2}} )   \right)  \leq 0, \label{eq:cor:PH2LEQPH1H2:last:Last}
		\end{align}
	\end{subequations} 
	where the first two equivalences are from \cite[Proposition~2.10]{BOyW2018Proper}, the fourth equivalence is from \cref{eq:lem:PH2LEQPH1H2:NORM:H12} and \cref{eq:lem:PH2LEQPH1H2:NORM:H1:H2H1},  and $ \left( \norm{u_{1}}^{2} ( \eta_{2} - \innp{x,u_{2}} )  - \innp{u_{1},u_{2}} (\eta_{1} - \innp{x,u_{1}}) \right)^{2}  = \norm{u_{1}}^{4} ( \eta_{2} - \innp{x,u_{2}} )^{2} - 2 \norm{u_{1}}^{2} ( \eta_{2} - \innp{x,u_{2}} )\innp{u_{1},u_{2}} (\eta_{1} - \innp{x,u_{1}}) + \innp{u_{1},u_{2}}^{2} (\eta_{1} - \innp{x,u_{1}})^{2} $ yields the sixth equivalence. Hence, it is equivalent to show \cref{eq:cor:PH2LEQPH1H2:last:Last}.
	
	Recall that $x \in C_{3}$. Then,   \cref{eq:C3} yields
	$\norm{u_{1}}^{2} (\innp{x,u_{2}} - \eta_{2}) > \innp{u_{1}, u_{2}} (\innp{x,u_{1}} -\eta_{1})$ and $ \norm{u_{2}}^{2} (\innp{x,u_{1}} - \eta_{1}) > \innp{u_{1}, u_{2}} (\innp{x,u_{2}} -\eta_{2})$. By \cref{eq:W12}, $x \in W_{1}$ means $\eta_{1} - \innp{x,u_{1}} \geq 0 $. Hence, by assumptions, $x \in C_{3}$,   $x \in W_{1}$ and $ \innp{u_{1},u_{2}}  <0 $, 
	\begin{subequations}
		\begin{align*}
		&\norm{u_{1}}^{2} (\eta_{1} - \innp{x,u_{1}})  \left( \norm{u_{2}}^{2} (\eta_{1} - \innp{x,u_{1}} ) -  \innp{u_{1},u_{2}} ( \eta_{2} - \innp{x,u_{2}} ) \right) \leq 0,\\
		&\innp{u_{1},u_{2}} (\eta_{1} - \innp{x,u_{1}}) \left( \innp{u_{1},u_{2}} (\eta_{1} - \innp{x,u_{1}})  -  \norm{u_{1}}^{2} ( \eta_{2} - \innp{x,u_{2}} )   \right)  \leq 0,
		\end{align*}
	\end{subequations}
	which imply that \cref{eq:cor:PH2LEQPH1H2:last:Last} holds.
Accordingly, \cref{lem:PH2LEQPH1H2:W1} holds.

	\cref{lem:PH2LEQPH1H2:W1C}: Recall again that $x \in C_{3}$ means
	\begin{align} \label{eq:cor:PH2LEQPH1H2:x1}
	\norm{u_{1}}^{2} (\innp{x,u_{2}} - \eta_{2}) > \innp{u_{1}, u_{2}} (\innp{x,u_{1}} -\eta_{1})~   \text{and}~
	\norm{u_{2}}^{2} (\innp{x,u_{1}} - \eta_{1}) > \innp{u_{1}, u_{2}} (\innp{x,u_{2}} -\eta_{2}).
	\end{align} 
	Combine the first inequality in \cref{eq:cor:PH2LEQPH1H2:x1} with the assumption $\innp{u_{1}, u_{2}} <0$ to know that
	\begin{align} \label{eq:cor:PH2LEQPH1H2:x1:u1u2}
	\innp{u_{1},u_{2}}\left(  \norm{u_{1}}^{2} \left( \eta_{2} -\innp{x,u_{2}}  \right) -  \innp{u_{1},u_{2}} \left( \eta_{1} - \innp{x,u_{1}} \right)\right) >0.
	\end{align}
	Clearly, $x \in C_{3}$ and  \cref{rem:Ci:P}\cref{rem:Ci:P:C} imply that $\Pro_{H_{1}}x \notin W_{2}$. Combine this with the assumption, $x \in W^{c}_{1}$  and  \cref{remark:FactWFactH},  to obtain that $	\Pro_{W_{2}}\Pro_{W_{1}} x =\Pro_{W_{2}}\Pro_{H_{1}} x =\Pro_{H_{2}}\Pro_{H_{1}} x$.
	Moreover, by \cref{cor:PH2PH1}\cref{cor:PH2PH1:H2H1},
		\begin{align*}
		\Pro_{H_{2}}\Pro_{H_{1}} x 
		= 
		x+ \frac{\eta_{1} - \innp{x,u_{1}}}{\norm{u_{1}}^{2}}u_{1} +\frac{1}{\norm{u_{2}}^{2} \norm{u_{1}}^{2}} \left( \norm{u_{1}}^{2} \left( \eta_{2} - \innp{x, u_{2}} \right) -   \innp{u_{1}, u_{2}}\left( \eta_{1} - \innp{x,u_{1}}\right) \right) u_{2},
		\end{align*} 
so
	\begin{align*}
	&\eta_{1} - \innp{ \Pro_{H_{2}}\Pro_{H_{1}}x, u_{1} } \\ 
=& \eta_{1} - \innp{x,u_{1}} -\left(\eta_{1} - \innp{x,u_{1}} \right) -\frac{1}{\norm{u_{2}}^{2} \norm{u_{1}}^{2}} \left( \norm{u_{1}}^{2} \left( \eta_{2} - \innp{x, u_{2}} \right) -   \innp{u_{1}, u_{2}}\left( \eta_{1} - \innp{x,u_{1}}\right) \right) \innp{u_{2},u_{1}}\\
	=& -\frac{1}{\norm{u_{1}}^{2}\norm{u_{2}}^{2}}\innp{u_{2},u_{1}}\left(  \norm{u_{1}}^{2} \left( \eta_{2} -\innp{x,u_{2}}  \right) -  \innp{u_{1},u_{2}} \left( \eta_{1} - \innp{x,u_{1}} \right)\right)   \stackrel{\cref{eq:cor:PH2LEQPH1H2:x1:u1u2}}{ <}0,
	\end{align*}
	which shows that $\Pro_{H_{2}}\Pro_{H_{1}}x \in W^{c}_{1}$. Hence, by \cref{lemma:W1W2H1H2:SetCharacter}\cref{lemma:W1W2H1H2:SetCharacter:Remaining}, $	\Pro_{H_{2}}\Pro_{H_{1}}x \in W^{c}_{1} \cap H_{2} \subseteq C_{3}$.
	Therefore, by \cref{lem:PW1W2H1H2:IV} and \cref{fact:ExchangeProj}, $	\Pro_{W_{1} \cap W_{2}} \Pro_{H_{2}}\Pro_{H_{1}}x  = \Pro_{ H_{1} \cap H_{2}} \Pro_{H_{2}}\Pro_{H_{1}}x = \Pro_{ H_{1} \cap H_{2}}x$.
\end{proof}

\begin{theorem} \label{thm:PROW1W2:BAM}
	Assume that $\innp{u_{1},u_{2}} <0$. Denote by $\gamma :=\frac{|\innp{u_{1}, u_{2}}| }{\norm{u_{1}} \norm{u_{2}}}$.
	 Then $ \gamma \in  \left[0,1\right[\,$ and $\Pro_{W_{2}}\Pro_{W_{1}}$ is a $\gamma$-BAM.
	Consequently, $(\forall x \in \mathcal{H})$ $(\forall k \in \mathbb{N})$  $\norm{(\Pro_{W_{2}}\Pro_{W_{1}})^{k}x -\Pro_{W_{1} \cap W_{2}}x} \leq \gamma^{k} \norm{x- \Pro_{W_{1} \cap W_{2}}x}$.
\end{theorem}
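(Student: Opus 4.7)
The plan is to verify the three conditions of \cref{def:BAM} for $G := \Pro_{W_{2}}\Pro_{W_{1}}$ with candidate fixed-point set $F := W_{1} \cap W_{2}$, and then invoke \cref{fact:BAM:Properties} to obtain the iterated inequality. First, $\gamma \in \left[0,1\right[\,$ is immediate from \cref{lem:basic:u1u2:LinerDep}\cref{item:lem:basic:u1u2:LinerDep:LID} and the standing linear independence of $u_{1}$ and $u_{2}$. The set $F$ is closed and convex as an intersection of halfspaces, and nonempty because \cref{lem:capMi} yields $H_{1} \cap H_{2} \neq \varnothing$ while $H_{i} \subseteq W_{i}$. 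The identity $\Fix G = F$ is standard: $F \subseteq \Fix G$ is trivial, and conversely for any $y \in \Fix G$ and $z \in F$ the chain $\norm{y-z} = \norm{Gy - z} \leq \norm{\Pro_{W_{1}}y - z} \leq \norm{y - z}$, using nonexpansiveness of $\Pro_{W_{2}}$ and $\Pro_{W_{1}}$ with $z$ a common fixed point, forces $\norm{\Pro_{W_{1}}y - z} = \norm{y - z}$; firm nonexpansiveness of $\Pro_{W_{1}}$ then yields $y = \Pro_{W_{1}}y \in W_{1}$, and symmetrically $y \in W_{2}$.

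The two remaining BAM conditions---the equality $\Pro_{F}G = \Pro_{F}$ and the $\gamma$-contraction $\norm{Gx - \Pro_{F}x} \leq \gamma\norm{x-\Pro_{F}x}$---are verified by case analysis on the partition $\mathcal{H} = (W_{1} \cap W_{2}) \cup C_{1} \cup C_{2} \cup C_{3}$ supplied by \cref{fact:Projec:Intersect:halfspace}. In the first three cases $Gx$ lands directly on $\Pro_{F}x \in F$, so both conditions reduce to the idempotence of $\Pro_{F}$: if $x \in W_{1} \cap W_{2}$ then $Gx = x$; if $x \in C_{1}$, then \cref{lemma:W1W2H1H2:SetCharacter}\cref{lemma:W1W2H1H2:SetCharacter:W1CINTW2} places $x \in W_{1}^{c} \cap \inte W_{2}$, and combining \cref{remark:FactWFactH} with \cref{rem:Ci:P} yields $Gx = \Pro_{H_{1}}x = \Pro_{F}x$; the case $x \in C_{2}$ is symmetric with $Gx = \Pro_{H_{2}}x = \Pro_{F}x$.

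The substantive case is $x \in C_{3}$, where \cref{rem:Ci:P}\cref{rem:Ci:P:Pformula} gives $\Pro_{F}x = \Pro_{H_{1} \cap H_{2}}x$; split further according to whether $x \in W_{1}^{c}$ or $x \in W_{1}$. If $x \in W_{1}^{c}$, then $\Pro_{W_{1}}x = \Pro_{H_{1}}x$ by \cref{remark:FactWFactH}, and \cref{rem:Ci:P}\cref{rem:Ci:P:C} forces $\Pro_{H_{1}}x \notin W_{2}$, so \cref{remark:FactWFactH} again yields $Gx = \Pro_{H_{2}}\Pro_{H_{1}}x$. \cref{lem:PH2LEQPH1H2}\cref{lem:PH2LEQPH1H2:W1C} then directly supplies $\Pro_{F}(Gx) = \Pro_{H_{1} \cap H_{2}}x = \Pro_{F}x$, while \cref{lem:H1H2Ineq} gives the bound $\norm{Gx - \Pro_{H_{1} \cap H_{2}}x} \leq \gamma\norm{x - \Pro_{H_{1} \cap H_{2}}x}$.

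The last and hardest subcase is $x \in C_{3} \cap W_{1}$. Disjointness of $C_{3}$ from $W_{1} \cap W_{2}$ forces $x \in W_{2}^{c}$, so $\Pro_{W_{1}}x = x$ and $Gx = \Pro_{H_{2}}x$ by \cref{remark:FactWFactH}. The contraction inequality is obtained by chaining \cref{lem:PH2LEQPH1H2}\cref{lem:PH2LEQPH1H2:W1}, which bounds $\norm{\Pro_{H_{2}}x - \Pro_{H_{1} \cap H_{2}}x}$ by $\norm{\Pro_{H_{2}}\Pro_{H_{1}}x - \Pro_{H_{1} \cap H_{2}}x}$, with \cref{lem:H1H2Ineq}, which bounds the latter by $\gamma \norm{x - \Pro_{H_{1} \cap H_{2}}x}$. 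The main obstacle is verifying $\Pro_{F}(Gx) = \Pro_{F}x$ here, since $Gx$ need not lie in $F$. A short computation using the second $C_{3}$-defining inequality $\norm{u_{2}}^{2}(\innp{x,u_{1}} - \eta_{1}) > \innp{u_{1},u_{2}}(\innp{x,u_{2}} - \eta_{2})$ shows $\innp{Gx, u_{1}} > \eta_{1}$, so $Gx \in W_{1}^{c} \cap H_{2} \subseteq C_{3}$ by \cref{lemma:W1W2H1H2:SetCharacter}\cref{lemma:W1W2H1H2:SetCharacter:Remaining}; applying \cref{rem:Ci:P}\cref{rem:Ci:P:Pformula} to $Gx$ gives $\Pro_{F}(Gx) = \Pro_{H_{1} \cap H_{2}}(\Pro_{H_{2}}x)$, which collapses to $\Pro_{H_{1} \cap H_{2}}x = \Pro_{F}x$ by \cref{fact:ExchangeProj} applied to the inclusion $H_{1} \cap H_{2} \subseteq H_{2}$. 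With all cases settled, $G$ is a $\gamma$-BAM, and the iterated inequality follows from \cref{fact:BAM:Properties}.
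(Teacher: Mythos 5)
Your proposal is correct and follows essentially the same route as the paper: the same reduction to the two BAM conditions via \cref{def:BAM} and \cref{fact:BAM:Properties}, the same partition $\mathcal{H}=(W_{1}\cap W_{2})\cup C_{1}\cup C_{2}\cup C_{3}$ from \cref{fact:Projec:Intersect:halfspace}, and the same treatment of the $C_{3}$ subcases via \cref{lem:PW1W2H1H2:IV}, \cref{lem:PH2LEQPH1H2}, \cref{lem:H1H2Ineq}, and \cref{fact:ExchangeProj}. The only cosmetic differences are that you verify $\Fix(\Pro_{W_{2}}\Pro_{W_{1}})=W_{1}\cap W_{2}$ by hand where the paper cites Cegielski, and you compute $\innp{\Pro_{H_{2}}x,u_{1}}>\eta_{1}$ directly where the paper invokes \cref{rem:Ci:P}.
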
	

\begin{proof}
	Recall that $u_{1}$ and $u_{2}$ are linearly independent. Then $\gamma \in \left[0,1\right[\,$ follows from  \cref{lem:H1H2Ineq}. 
As a consequence of  \cite[Corollary~4.5.2]{Cegielski}, $\Fix \Pro_{W_{2}}\Pro_{W_{1}}=W_{1} \cap W_{2}$ is a nonempty closed convex subset of $\mathcal{H}$. Hence, by \cref{def:BAM} and   \cref{fact:BAM:Properties}, it remains to show that for every $x \in \mathcal{H}$, 
	\begin{enumerate}
		\item \label{thm:PROW1W2:BAM:product} $\Pro_{W_{1} \cap W_{2}} \Pro_{W_{2}}\Pro_{W_{1}}x =\Pro_{W_{1} \cap W_{2}}x$, and
		\item \label{thm:PROW1W2:BAM:Indequa}   $ \norm{ \Pro_{W_{2}}\Pro_{W_{1}}x -  \Pro_{W_{1} \cap W_{2}}x} \leq \gamma  \norm{ x -  \Pro_{W_{1} \cap W_{2}}x}$.
	\end{enumerate}
	 
	Let $x \in \mathcal{H}$.  Note that if   $\Pro_{W_{2}}\Pro_{W_{1}}x=\Pro_{W_{1} \cap W_{2}}x$, then clearly \cref{thm:PROW1W2:BAM:product} and \cref{thm:PROW1W2:BAM:Indequa}  hold. 
	
	Taking  \cref{fact:Projec:Intersect:halfspace} into account, we have exactly the following four cases. 
	
	\textbf{Case 1:}  $x \in W_{1} \cap W_{2}$. This is trivial.
	
	\textbf{Case 2:}  $x \in C_{1}$. Then   \cref{fact:Projec:Intersect:halfspace}\cref{fact:Projec:Intersect:halfspace:ii} and \cref{fact:Projec:halfspaces} imply that 
	\begin{align} \label{eq:thm:PROW1W2:BAM:II:Prow1w2}
	\Pro_{W_{1} \cap W_{2}}x=x+ \frac{\eta_{1} - \innp{x,u_{1}}}{\norm{u_{1}}^{2}}u_{1}= \Pro_{H_{1}}x \in H_{1} \cap W_{2}. 
	\end{align} 
	According to \cref{lemma:W1W2H1H2:SetCharacter}\cref{lemma:W1W2H1H2:SetCharacter:W1CINTW2}, $x \in C_{1} \subseteq W^{c}_{1} \cap \inte(W_{2})$, which, by \cref{remark:FactWFactH} and \cref{eq:thm:PROW1W2:BAM:II:Prow1w2},  deduces that
	\begin{align*}
	\Pro_{W_{2}}\Pro_{W_{1}}x =\Pro_{W_{2}}\Pro_{H_{1}}x = \Pro_{H_{1}}x=	\Pro_{W_{1} \cap W_{2}}x.
	\end{align*}

	\textbf{Case 3:}  $x \in C_{2}$. Then enforcing  \cref{fact:Projec:Intersect:halfspace}\cref{fact:Projec:Intersect:halfspace:iii} and \cref{fact:Projec:halfspaces}, we know that 
	\begin{align} \label{eq:thm:PROW1W2:BAM:III:Prow1w2}
	\Pro_{W_{1} \cap W_{2}}x=x+ \frac{\eta_{2} - \innp{x,u_{2}}}{\norm{u_{2}}^{2}}u_{2} = \Pro_{H_{2}}x.
	\end{align} 
In view of \cref{lemma:W1W2H1H2:SetCharacter}\cref{lemma:W1W2H1H2:SetCharacter:W2CINTW1},  $	x \in C_{2} \subseteq  W^{c}_{2} \cap \inte(W_{1})$.
Hence, by  \cref{remark:FactWFactH} and \cref{eq:thm:PROW1W2:BAM:III:Prow1w2},
	\begin{align} \label{eq:thm:PROW1W2:BAM:Case3touse}
 \Pro_{W_{2}}\Pro_{W_{1}}x  = \Pro_{W_{2}}x= \Pro_{H_{2}}x= \Pro_{W_{1} \cap W_{2}}x.
	\end{align}

	\textbf{Case 4:}   $x \in C_{3}$. 
Then   \cref{lem:PW1W2H1H2:IV} leads to
	\begin{align} \label{eq:thm:PROW1W2:BAM:Case4W1W2}
	\Pro_{W_{1} \cap W_{2}}x = \Pro_{H_{1} \cap H_{2}}x.
	\end{align}
	
	Note that $C_{3} \subseteq \mathcal{H} = \left(W_{1} \cap W_{2} \right) \cup \left(W_{1} \cap W^{c}_{2} \right)  \cup W^{c}_{1}  $,   \cref{fact:Projec:Intersect:halfspace}, and $C_{3} \cap (W_{1} \cap W_{2} ) =\varnothing$. We have exactly the following two subcases.

	Case 4.1:  $x \in  W_{1} \cap W^{c}_{2}$. Combine $x \in C_{3}$ with  \cref{remark:FactWFactH}, \cref{rem:Ci:P}\cref{rem:Ci:P:C}, and \cref{lemma:W1W2H1H2:SetCharacter}\cref{lemma:W1W2H1H2:SetCharacter:Remaining} to see that
	\begin{align} \label{eq:thm:PROW1W2:BAM:Case4:4.1:W2W1}
	\Pro_{W_{2}}\Pro_{W_{1}}x=\Pro_{W_{2}}x= \Pro_{H_{2}}x\in H_{2} \cap W^{c}_{1} \subseteq C_{3}.
	\end{align} 
	 Using \cref{lem:PW1W2H1H2:IV} and \cref{fact:ExchangeProj}, we have that 
	\begin{align}  \label{eq:thm:PROW1W2:BAM:Case4:4.1:W1W2}
	\Pro_{W_{1} \cap W_{2}} \Pro_{W_{2}}\Pro_{W_{1}}x=\Pro_{H_{1} \cap H_{2}} \Pro_{W_{2}}\Pro_{W_{1}}x \stackrel{\cref{eq:thm:PROW1W2:BAM:Case4:4.1:W2W1}}{=} \Pro_{ H_{1} \cap H_{2}} \Pro_{H_{2}}x  = \Pro_{ H_{1} \cap H_{2}}x\stackrel{\cref{eq:thm:PROW1W2:BAM:Case4W1W2}}{=} \Pro_{W_{1} \cap W_{2}}x.
	\end{align} 
Moreover, $ \norm{ \Pro_{W_{2}}\Pro_{W_{1}}x -  \Pro_{W_{1} \cap W_{2}}x} \stackrel{\cref{eq:thm:PROW1W2:BAM:Case4:4.1:W2W1}}{=} \norm{ \Pro_{H_{2}}x -  \Pro_{W_{1} \cap W_{2}}x} \stackrel{\cref{eq:thm:PROW1W2:BAM:Case4W1W2}}{=} \norm{\Pro_{H_{2}}x  -\Pro_{ H_{1} \cap H_{2}} x } \leq \norm{\Pro_{H_{2}}\Pro_{H_{1}}x  -\Pro_{ H_{1} \cap H_{2}} x }  \leq \gamma \norm{x -\Pro_{H_{1} \cap H_{2}}x } \stackrel{\cref{eq:thm:PROW1W2:BAM:Case4W1W2}}{=} \gamma \norm{x -\Pro_{W_{1} \cap W_{2}}x }$,
where the last two inequalities are from \cref{lem:PH2LEQPH1H2}\cref{lem:PH2LEQPH1H2:W1} and \cref{lem:H1H2Ineq} respectively.
Hence, \cref{thm:PROW1W2:BAM:product} and
	\cref{thm:PROW1W2:BAM:Indequa} hold. 
	
	Case 4.2:  $x \in  W^{c}_{1} $. Because $x \in C_{3}$, by  \cref{rem:Ci:P}\cref{rem:Ci:P:C},  $\Pro_{H_{1}}x \notin W_{2}$.  
	Hence, by \cref{remark:FactWFactH} and	\cref{lem:PH2LEQPH1H2}\cref{lem:PH2LEQPH1H2:W1C}, 
	\begin{align} \label{thm:PROW1W2:BAM:Case4:4.2:touse}
	\Pro_{W_{2}}\Pro_{W_{1}} x = \Pro_{W_{2}}\Pro_{H_{1}} x =\Pro_{H_{2}}\Pro_{H_{1}} x \quad \text{and} \quad \Pro_{W_{1} \cap W_{2}} \Pro_{W_{2}}\Pro_{W_{1}} x    = \Pro_{ H_{1} \cap H_{2}}x\stackrel{\cref{eq:thm:PROW1W2:BAM:Case4W1W2}}{=}\Pro_{W_{1} \cap W_{2}}x.
	\end{align}
Moreover, $\norm{ \Pro_{W_{2}}\Pro_{W_{1}}x -  \Pro_{W_{1} \cap W_{2}}x}  \stackrel{\cref{thm:PROW1W2:BAM:Case4:4.2:touse}}{=} \norm{\Pro_{H_{2}}\Pro_{H_{1}}x  -\Pro_{ W_{1} \cap W_{2}} x } \stackrel{\cref{eq:thm:PROW1W2:BAM:Case4W1W2}}{=} \norm{\Pro_{H_{2}}\Pro_{H_{1}}x  -\Pro_{ H_{1} \cap H_{2}} x }  \leq \gamma \norm{x -\Pro_{H_{1} \cap H_{2}}x}$ $ \stackrel{\cref{eq:thm:PROW1W2:BAM:Case4W1W2}}{=} \gamma \norm{x -\Pro_{W_{1} \cap W_{2}}x }$, where the inequality is from  \cref{lem:H1H2Ineq}.
	Therefore, \cref{thm:PROW1W2:BAM:product} and 	\cref{thm:PROW1W2:BAM:Indequa} are true.
\end{proof}

\begin{proposition} \label{prop:u1u2geq0}
	Suppose that $\innp{u_{1},u_{2}} \geq 0$. Let $x \in \mathcal{H}$. Then the following statements hold:
	\begin{enumerate}
		\item \label{prop:u1u2geq0:W1W2} Suppose that $x \in W_{1} \cap W_{2}$. Then $\Pro_{W_{2}}\Pro_{W_{1}}x=x=\Pro_{W_{1} \cap W_{2}} x $.
		\item  \label{prop:u1u2geq0:Wc1W2} Suppose that $x \in W^{c}_{1} \cap W_{2}$. Then $\Pro_{W_{2}}\Pro_{W_{1}}x=\Pro_{H_{1}}x=\Pro_{W_{1} \cap W_{2}} x $.
		\item  \label{prop:u1u2geq0:W1Wc2} Suppose that $x \in W_{1} \cap W^{c}_{2}$. Then $\Pro_{W_{2}}\Pro_{W_{1}}x=\Pro_{H_{2}}x=\Pro_{W_{1} \cap W_{2}} x $.
	\end{enumerate}
\end{proposition}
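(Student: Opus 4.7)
The plan is to handle each part by combining the set-characterizations from \cref{lemma:W1W2H1H2:SetCharacter}\cref{lemma:W1W2H1H2:SetCharacter:geq} with the projection formulas collected in \cref{rem:Ci:P}, plus the standard identity $\Pro_{W_i}x = \Pro_{H_i}x$ for $x \in W_i^c$ from \cref{remark:FactWFactH}. Part \cref{prop:u1u2geq0:W1W2} is immediate, since $x \in W_1 \cap W_2$ forces $\Pro_{W_1}x = x$ and $\Pro_{W_2}x = x$, and also $\Pro_{W_1 \cap W_2}x = x$ by definition of the projector onto a set containing $x$.

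For \cref{prop:u1u2geq0:Wc1W2}, suppose $x \in W_1^c \cap W_2$. First, by \cref{remark:FactWFactH} applied to $x \notin W_1$, we have $\Pro_{W_1}x = \Pro_{H_1}x$. Next, by \cref{lemma:W1W2H1H2:SetCharacter}\cref{lemma:W1W2H1H2:SetCharacter:geq:C1}, $x \in C_1$, and then \cref{rem:Ci:P}\cref{rem:Ci:P:C} gives $\Pro_{H_1}x \in W_2$, so $\Pro_{W_2}\Pro_{H_1}x = \Pro_{H_1}x$. Finally, \cref{rem:Ci:P}\cref{rem:Ci:P:Pformula} yields $\Pro_{W_1 \cap W_2}x = \Pro_{H_1}x$. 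Chaining these three identities gives $\Pro_{W_2}\Pro_{W_1}x = \Pro_{H_1}x = \Pro_{W_1 \cap W_2}x$.

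For \cref{prop:u1u2geq0:W1Wc2}, suppose $x \in W_1 \cap W_2^c$. Then $\Pro_{W_1}x = x$, and by \cref{remark:FactWFactH} applied to $x \notin W_2$, $\Pro_{W_2}x = \Pro_{H_2}x$; hence $\Pro_{W_2}\Pro_{W_1}x = \Pro_{H_2}x$. By \cref{lemma:W1W2H1H2:SetCharacter}\cref{lemma:W1W2H1H2:SetCharacter:geq:C2}, $x \in C_2$, and then \cref{rem:Ci:P}\cref{rem:Ci:P:Pformula} gives $\Pro_{W_1 \cap W_2}x = \Pro_{H_2}x$, completing the identification.

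There is no real obstacle here: the heavy lifting — in particular, the partition of $\mathcal{H}$ via $C_1, C_2, C_3$ and the set-inclusions $W_1^c \cap W_2 \subseteq C_1$ and $W_1 \cap W_2^c \subseteq C_2$ that come from $\innp{u_1,u_2} \geq 0$ — has already been done in \cref{lemma:W1W2H1H2:SetCharacter}\cref{lemma:W1W2H1H2:SetCharacter:geq}. The only thing to be careful about is that the fourth possible region $W_1^c \cap W_2^c$ is not covered by the statement; this is consistent because in that region one typically lands in $C_3$, where $\Pro_{W_2}\Pro_{W_1}x$ need not equal $\Pro_{W_1 \cap W_2}x$, so the proposition correctly restricts to the three regions where a single-step collapse occurs.
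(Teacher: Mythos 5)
Your proposal is correct and follows essentially the same route as the paper: the same reduction via $W_1^c\cap W_2\subseteq C_1$ and $W_1\cap W_2^c\subseteq C_2$ from \cref{lemma:W1W2H1H2:SetCharacter}, combined with \cref{remark:FactWFactH}. The only cosmetic difference is that in part \cref{prop:u1u2geq0:Wc1W2} you invoke \cref{rem:Ci:P} to conclude $\Pro_{H_1}x\in W_2$ and $\Pro_{W_1\cap W_2}x=\Pro_{H_1}x$, whereas the paper re-derives these directly from \cref{cor:PH2PH1}\cref{cor:PH2PH1:notin:EQ} and \cref{fact:Projec:Intersect:halfspace}; the content is identical.
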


\begin{proof}
	\cref{prop:u1u2geq0:W1W2}: This is trivial. 
	
	\cref{prop:u1u2geq0:Wc1W2}: According to \cref{lemma:W1W2H1H2:SetCharacter}\cref{lemma:W1W2H1H2:SetCharacter:geq:C1},  $x \in W^{c}_{1} \cap W_{2} \subseteq C_{1}$. Then    \cref{fact:Projec:Intersect:halfspace}\cref{fact:Projec:Intersect:halfspace:ii} and \cref{fact:Projec:Hyperplane} deduce 
	\begin{align} \label{eq:prop:u1u2geq0:W1capW2}
	\Pro_{W_{1} \cap W_{2}} x=\Pro_{H_{1}}x.
	\end{align}
In view of \cref{eq:W12}, $x \in W^{c}_{1} \cap W_{2} $ means $\innp{x,u_{1}} >\eta_{1} $ and $\innp{x,u_{2}} \leq \eta_{2}$, which combining with  $\innp{u_{1},u_{2}} \geq 0$,  imply that $\eta_{2} -\innp{x,u_{2}} \geq 0$ and $ - \innp{u_{1},u_{2}} ( \eta_{1} -\innp{x, u_{1} } ) \geq 0$. Hence,
	\begin{align} \label{eq:prop:u1u2geq0:Wc1W2:pre}
	\norm{u_{1}}^{2} ( \eta_{2} -\innp{x,u_{2}}) - \innp{u_{1},u_{2}} ( \eta_{1} -\innp{x, u_{1} } ) \geq 0,
	\end{align}
	which, connecting with  \cref{cor:PH2PH1}\cref{cor:PH2PH1:notin:EQ}, yields that $\Pro_{H_{1}}x \in W_{2} $. Hence, 
	\begin{align*}
	 \Pro_{W_{2}}\Pro_{W_{1}}x=\Pro_{W_{2}}\Pro_{H_{1}}x=\Pro_{H_{1}}x\stackrel{\cref{eq:prop:u1u2geq0:W1capW2}}{=}\Pro_{W_{1} \cap W_{2}} x. 
	\end{align*}

	 \cref{prop:u1u2geq0:W1Wc2}: Due to \cref{lemma:W1W2H1H2:SetCharacter}\cref{lemma:W1W2H1H2:SetCharacter:geq:C2},  $x \in W_{1} \cap W^{c}_{2} \subseteq C_{2}$. Then by  \cref{fact:Projec:Intersect:halfspace}\cref{fact:Projec:Intersect:halfspace:iii} and \cref{fact:Projec:Hyperplane}, 
	 \begin{align} \label{eq:prop:u1u2geq0:W1capW2:H2}
	 \Pro_{W_{1} \cap W_{2}} x=\Pro_{H_{2}}x.
	 \end{align}
	 Hence,  $x \in W_{1} \cap W^{c}_{2} $ and \cref{remark:FactWFactH} imply that
	 \begin{align*}
	 \Pro_{W_{2}}\Pro_{W_{1}}x=\Pro_{W_{2}}x=\Pro_{H_{2}}x\stackrel{\cref{eq:prop:u1u2geq0:W1capW2:H2}}{=}\Pro_{W_{1} \cap W_{2}} x. 
	 \end{align*}
\end{proof}

\begin{proposition} \label{prop:>0:inW1W2}
	Suppose that $\innp{u_{1}, u_{2}} > 0$. Then $(\forall x \in \mathcal{H})$ $\Pro_{W_{2}}\Pro_{W_{1}}x \in W_{1} \cap W_{2}$. In particular, if $ x \in W_{1} \cup W_{2}$, then  $\Pro_{W_{2}}\Pro_{W_{1}}x =\Pro_{W_{1} \cap W_{2}}x $; if $ x \in W^{c}_{1} \cap W^{c}_{2}$ with $ \Pro_{H_{1}}x \in W_{2} $, then  $\Pro_{W_{2}}\Pro_{W_{1}}x=\Pro_{H_{1}}x  =\Pro_{W_{1} \cap W_{2}}x $; if $ x \in W^{c}_{1} \cap W^{c}_{2}$ with $ \Pro_{H_{1}}x \notin W_{2} $, then $\Pro_{W_{2}}\Pro_{W_{1}}x \in W_{1} \cap W_{2} \smallsetminus \{ \Pro_{W_{1} \cap W_{2}}x  \}$.
\end{proposition}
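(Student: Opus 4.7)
The natural strategy is to partition $\mathcal{H}$ into the four regions $W_{1}\cap W_{2}$, $W_{1}^{c}\cap W_{2}$, $W_{1}\cap W_{2}^{c}$, $W_{1}^{c}\cap W_{2}^{c}$ and argue in each. The first case is immediate ($\Pro_{W_{1}}x=x$ and $\Pro_{W_{2}}x=x$), while the second and third follow directly from \cref{prop:u1u2geq0}\cref{prop:u1u2geq0:Wc1W2,prop:u1u2geq0:W1Wc2}: in those cases $\Pro_{W_{2}}\Pro_{W_{1}}x$ equals $\Pro_{H_{1}}x$ or $\Pro_{H_{2}}x$ respectively and agrees with $\Pro_{W_{1}\cap W_{2}}x$, so in particular the image lies in $W_{1}\cap W_{2}$. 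This covers the ``$x\in W_{1}\cup W_{2}$'' bullet.

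The substantive work is the region $x\in W_{1}^{c}\cap W_{2}^{c}$, which I would split according to whether $\Pro_{H_{1}}x\in W_{2}$. When $\Pro_{H_{1}}x\in W_{2}$, \cref{remark:FactWFactH} gives $\Pro_{W_{1}}x=\Pro_{H_{1}}x$ and then $\Pro_{W_{2}}\Pro_{H_{1}}x=\Pro_{H_{1}}x$, so $\Pro_{W_{2}}\Pro_{W_{1}}x=\Pro_{H_{1}}x\in H_{1}\cap W_{2}\subseteq W_{1}\cap W_{2}$. To match this against $\Pro_{W_{1}\cap W_{2}}x$, I would observe that the assumption $\Pro_{H_{1}}x\in W_{2}$ together with $x\in W_{1}^{c}$ is exactly the membership $x\in C_{1}$ (via \cref{cor:PH2PH1}\cref{cor:PH2PH1:notin:EQ} and \cref{eq:C1}), so \cref{fact:Projec:Intersect:halfspace}\cref{fact:Projec:Intersect:halfspace:ii} gives $\Pro_{W_{1}\cap W_{2}}x=\Pro_{H_{1}}x$, yielding equality.

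The remaining subcase, $x\in W_{1}^{c}\cap W_{2}^{c}$ with $\Pro_{H_{1}}x\notin W_{2}$, is the main obstacle. Here I would apply \cref{cor:PH2PH1}\cref{cor:PH2PH1:inW1} to get
\begin{equation*}
\Pro_{W_{2}}\Pro_{W_{1}}x \;=\; \Pro_{W_{2}}\Pro_{H_{1}}x \;=\; \Pro_{H_{2}}\Pro_{H_{1}}x \;\in\; \inte W_{1}\cap H_{2} \;\subseteq\; W_{1}\cap W_{2},
\end{equation*}
settling the inclusion. To see $\Pro_{W_{2}}\Pro_{W_{1}}x\neq \Pro_{W_{1}\cap W_{2}}x$, I would first show $\Pro_{H_{2}}x\notin W_{1}$, hence $x\in C_{3}$: assuming for contradiction $\Pro_{H_{2}}x\in W_{1}$ gives $\norm{u_{2}}^{2}(\innp{x,u_{1}}-\eta_{1})\leq\innp{u_{1},u_{2}}(\innp{x,u_{2}}-\eta_{2})$ while $\Pro_{H_{1}}x\notin W_{2}$ gives the strict reverse-type inequality $\norm{u_{1}}^{2}(\innp{x,u_{2}}-\eta_{2})>\innp{u_{1},u_{2}}(\innp{x,u_{1}}-\eta_{1})$; since $x\in W_{1}^{c}\cap W_{2}^{c}$ the two scalar gaps are positive, and multiplying the inequalities forces $\norm{u_{1}}^{2}\norm{u_{2}}^{2}\leq\innp{u_{1},u_{2}}^{2}$, contradicting the linear independence of $u_{1},u_{2}$ via \cref{lem:basic:u1u2:LinerDep}\cref{item:lem:basic:u1u2:LinerDep:LID}. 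Having placed $x$ in $C_{3}$, \cref{lem:PW1W2H1H2:IV} yields $\Pro_{W_{1}\cap W_{2}}x=\Pro_{H_{1}\cap H_{2}}x\in H_{1}$, whereas $\Pro_{W_{2}}\Pro_{W_{1}}x\in\inte W_{1}$ is disjoint from $H_{1}$. This forces the strict inequality and completes the proof.
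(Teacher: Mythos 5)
Your decomposition and the first three cases ($x\in W_{1}\cap W_{2}$, $x\in W_{1}\cup W_{2}$ via \cref{prop:u1u2geq0}, and $x\in W_{1}^{c}\cap W_{2}^{c}$ with $\Pro_{H_{1}}x\in W_{2}$, i.e.\ $x\in C_{1}$) match the paper and are fine, as is the inclusion $\Pro_{W_{2}}\Pro_{W_{1}}x\in\inte W_{1}\cap H_{2}$ in the last case via \cref{cor:PH2PH1}\cref{cor:PH2PH1:inW1}. The gap is in your claim that $x\in W_{1}^{c}\cap W_{2}^{c}$ together with $\Pro_{H_{1}}x\notin W_{2}$ forces $\Pro_{H_{2}}x\notin W_{1}$, hence $x\in C_{3}$. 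Writing $a:=\innp{x,u_{1}}-\eta_{1}>0$, $b:=\innp{x,u_{2}}-\eta_{2}>0$, $c:=\innp{u_{1},u_{2}}>0$, your two hypotheses read $\norm{u_{2}}^{2}a\leq cb$ and $\norm{u_{1}}^{2}b>ca$; these inequalities point in \emph{opposite} directions, so multiplying them does not produce $\norm{u_{1}}^{2}\norm{u_{2}}^{2}\leq c^{2}$ (that would require both to be of the form $\norm{\cdot}^{2}(\text{gap})\leq c\,(\text{other gap})$). They are jointly satisfiable whenever $\norm{u_{2}}a<\norm{u_{1}}b$. Concretely, in $\mathbb{R}^{2}$ with $u_{1}=(1,0)$, $u_{2}=(\cos\theta,\sin\theta)$, $\theta\in\left]0,\pi/2\right[$, $\eta_{1}=\eta_{2}=0$, and $x=(\varepsilon,1)$ with $\varepsilon>0$ small, one has $x\in W_{1}^{c}\cap W_{2}^{c}$, $\Pro_{H_{1}}x=(0,1)\notin W_{2}$, yet $\Pro_{H_{2}}x\in W_{1}$, so $x\in C_{2}$, not $C_{3}$.

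Consequently your final step covers only part of the last case. When $x\in C_{2}$, \cref{rem:Ci:P}\cref{rem:Ci:P:Pformula} gives $\Pro_{W_{1}\cap W_{2}}x=\Pro_{H_{2}}x$, not $\Pro_{H_{1}\cap H_{2}}x$, and the disjointness of $\inte W_{1}$ from $H_{1}$ says nothing about whether $\Pro_{H_{2}}\Pro_{H_{1}}x=\Pro_{H_{2}}x$. The paper handles exactly this: it notes that $x\in C_{2}\cup C_{3}$, rules out $\Pro_{H_{1}\cap H_{2}}x$ by your disjointness argument, and rules out $\Pro_{H_{2}}x$ separately by observing from the first identity in \cref{cor:PH2PH1}\cref{cor:PH2PH1:H2H1} that $\Pro_{H_{2}}\Pro_{H_{1}}x=\Pro_{H_{2}}x$ would force $0=\frac{\eta_{1}-\innp{x,u_{1}}}{\norm{u_{1}}^{2}}u_{1}-\frac{(\eta_{1}-\innp{x,u_{1}})\innp{u_{1},u_{2}}}{\norm{u_{1}}^{2}\norm{u_{2}}^{2}}u_{2}$ with $\eta_{1}-\innp{x,u_{1}}\neq 0$, contradicting the linear independence of $u_{1},u_{2}$. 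You need to add this second exclusion (or an equivalent one) to close the argument.
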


\begin{proof}
	Let $x\in \mathcal{H}$. If $x \in W_{1} \cup W_{2} $, then  
	$\Pro_{W_{2}}\Pro_{W_{1}}x =\Pro_{W_{1} \cap W_{2}}x \in W_{1} \cap W_{2}$ follows from \cref{prop:u1u2geq0}. 
	
	Suppose that $x \in W^{c}_{1} \cap W^{c}_{2} $. 
	Then  \cref{remark:FactWFactH} leads to  $\Pro_{W_{2}}\Pro_{W_{1}}x =\Pro_{W_{2}}\Pro_{H_{1}}x $.   If $\Pro_{H_{1}}x \in W_{2} $, then by \cref{rem:Ci:P}\cref{rem:Ci:P:C}, $x \in C_{1}$. Hence,  by \cref{rem:Ci:P}\cref{rem:Ci:P:Pformula}, $\Pro_{W_{2}}\Pro_{W_{1}}x =\Pro_{W_{2}}\Pro_{H_{1}}x=\Pro_{H_{1}}x =\Pro_{W_{1} \cap W_{2}}x$.
	Assume that $\Pro_{H_{1}}x \notin W_{2} $. 
Then employing \cref{cor:PH2PH1}\cref{cor:PH2PH1:inW1}, we have that
\begin{align} \label{eq:prop:>0:inW1W2}
\Pro_{W_{2}}\Pro_{W_{1}}x=\Pro_{H_{2}}\Pro_{H_{1}}x \in \inte W_{1} \cap H_{2} \subseteq W_{1} \cap W_{2}.
\end{align} 
On the other hand, combine  \cref{rem:Ci:P}\cref{rem:Ci:P:C}$\&$\cref{rem:Ci:P:Pformula} with $x \in W^{c}_{1} \cap W^{c}_{2} $ and $\Pro_{H_{1}}x \notin W_{2} $ to see that
 $x \in C_{2} \cup C_{3}$ and that either $\Pro_{W_{1} \cap W_{2}}x=\Pro_{H_{1} \cap H_{2}}x$ or $\Pro_{W_{1} \cap W_{2}}x=\Pro_{ H_{2}}x$. Clearly, $\Pro_{H_{1} \cap H_{2}}x \notin \inte W_{1}$, so by \cref{eq:prop:>0:inW1W2}, $\Pro_{W_{2}}\Pro_{W_{1}}x \neq   \Pro_{H_{1} \cap H_{2}}x$. Moreover, by \cref{fact:Projec:Hyperplane} and by the first identity in  \cref{cor:PH2PH1}\cref{cor:PH2PH1:H2H1}, if $ \Pro_{H_{2}}\Pro_{H_{1}}x =\Pro_{ H_{2}}x$, then $0 = \frac{\eta_{1} -\innp{x,u_{1}}}{\norm{u_{1}}^{2}} u_{1}   -\frac{ (\eta_{1} -\innp{x,u_{1}}) \innp{u_{1},u_{2}}}{ \norm{u_{1}}^{2} \norm{u_{2}}^{2}} u_{2}  $, which contradicts with the assumption that $u_{1}$ and $u_{2}$ are linearly independent. Therefore, by \cref{eq:prop:>0:inW1W2}, $\Pro_{W_{2}}\Pro_{W_{1}}x \neq   \Pro_{ H_{2}}x$. 
\end{proof}
\begin{lemma} \label{lem:PW1W2:BAM:u1u2eq0:equiva}
	Suppose that $\innp{u_{1}, u_{2}} = 0$.  Let $x \in \mathcal{H}$. Let $i \in \{1,2\}$ and $j \in \{1,2\} \smallsetminus \{i\}$.   Then $x \in W_{i}$ if and only if $P_{H_{j}}x \in W_{i}$.
\end{lemma}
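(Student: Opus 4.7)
The plan is essentially a one-line computation based on orthogonality. Since we are in the subsection where $u_{1}$ and $u_{2}$ are linearly independent, both $u_{1}$ and $u_{2}$ are necessarily nonzero, so \cref{fact:Projec:Hyperplane} applies to each $H_{k}$.

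First, I would invoke \cref{fact:Projec:Hyperplane} to get the explicit formula
\begin{align*}
\Pro_{H_{j}}x = x + \frac{\eta_{j} - \innp{x, u_{j}}}{\norm{u_{j}}^{2}} u_{j}.
\end{align*}
Next, I would take the inner product with $u_{i}$ on both sides and use the orthogonality hypothesis $\innp{u_{1}, u_{2}} = 0$, which gives $\innp{u_{j}, u_{i}} = 0$, to obtain
\begin{align*}
\innp{\Pro_{H_{j}}x, u_{i}} = \innp{x, u_{i}} + \frac{\eta_{j} - \innp{x, u_{j}}}{\norm{u_{j}}^{2}} \innp{u_{j}, u_{i}} = \innp{x, u_{i}}.
\end{align*}
Finally, recalling from \cref{eq:W12} that $y \in W_{i}$ if and only if $\innp{y, u_{i}} \leq \eta_{i}$, the identity $\innp{\Pro_{H_{j}}x, u_{i}} = \innp{x, u_{i}}$ immediately yields the equivalence $x \in W_{i} \Leftrightarrow \Pro_{H_{j}}x \in W_{i}$.

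There is no real obstacle here; the statement is just the clean consequence of the fact that projecting along a direction orthogonal to $u_{i}$ does not change the $u_{i}$-component. The only thing to be careful about is that both $u_{1}, u_{2}$ are nonzero, but this is automatic from the standing linear-independence assumption of the subsection.
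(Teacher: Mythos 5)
Your proposal is correct and is essentially identical to the paper's own proof: both apply \cref{fact:Projec:Hyperplane}, use $\innp{u_{j},u_{i}}=0$ to see that $\innp{\Pro_{H_{j}}x,u_{i}}=\innp{x,u_{i}}$, and conclude via the definition of $W_{i}$ in \cref{eq:W12}. Your remark that $u_{1},u_{2}\neq 0$ follows from the standing linear-independence assumption is a correct observation of a hypothesis the paper uses implicitly.
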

\begin{proof}
 Combine \cref{eq:W12} with   $\innp{u_{1}, u_{2}} = 0$ and \cref{fact:Projec:Hyperplane} to obtain that $x \in W_{i}  \Leftrightarrow \innp{x, u_{i}} \leq \eta_{i} 
	\Leftrightarrow  \Innp{x+ \frac{\eta_{j} - \innp{x,u_{j}}}{\norm{u_{j}}^{2}}u_{j}, u_{i}} \leq \eta_{i} 
	\Leftrightarrow \innp{\Pro_{H_{j}}x, u_{i}} \leq \eta_{i}  
	\Leftrightarrow 	\Pro_{H_{j}}x  \in W_{i}$.
\end{proof}

\begin{theorem} \label{theom:PW2PW1:BAM:0}
	Suppose   $\innp{u_{1}, u_{2}}=0$. Then $\Pro_{W_{2}}\Pro_{W_{1}} =\Pro_{W_{2} \cap W_{1}}$.
\end{theorem}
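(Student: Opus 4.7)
The plan is to partition $\mathcal{H}$ into the four regions $W_{1}\cap W_{2}$, $W_{1}^{c}\cap W_{2}$, $W_{1}\cap W_{2}^{c}$, $W_{1}^{c}\cap W_{2}^{c}$ and verify the identity pointwise in each. Note that the standing assumption of this subsection is that $u_{1}$ and $u_{2}$ are linearly independent, so in particular $u_{1}\neq 0$ and $u_{2}\neq 0$, and \cref{fact:Projec:halfspaces} and \cref{fact:Projec:Hyperplane} are available.

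For $x\in W_{1}\cap W_{2}$ the identity $\Pro_{W_{2}}\Pro_{W_{1}}x=x=\Pro_{W_{1}\cap W_{2}}x$ is immediate. For $x\in W_{1}^{c}\cap W_{2}$ and $x\in W_{1}\cap W_{2}^{c}$, since $\innp{u_{1},u_{2}}=0\geq 0$, \cref{prop:u1u2geq0}\cref{prop:u1u2geq0:Wc1W2} and \cref{prop:u1u2geq0:W1Wc2} yield $\Pro_{W_{2}}\Pro_{W_{1}}x=\Pro_{W_{1}\cap W_{2}}x$ directly.

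The only remaining case is $x\in W_{1}^{c}\cap W_{2}^{c}$. Since $x\notin W_{1}$, \cref{remark:FactWFactH} gives $\Pro_{W_{1}}x=\Pro_{H_{1}}x$. Because $\innp{u_{1},u_{2}}=0$, \cref{lem:PW1W2:BAM:u1u2eq0:equiva} with $i=2$, $j=1$ implies $\Pro_{H_{1}}x\in W_{2}\Leftrightarrow x\in W_{2}$; as $x\notin W_{2}$, we conclude $\Pro_{H_{1}}x\notin W_{2}$. Applying \cref{remark:FactWFactH} once more,
\begin{equation*}
\Pro_{W_{2}}\Pro_{W_{1}}x=\Pro_{W_{2}}\Pro_{H_{1}}x=\Pro_{H_{2}}\Pro_{H_{1}}x,
\end{equation*}
and by \cref{cor:PH2PH1}\cref{cor:PH2PH1:H2H1:=0} the right-hand side equals $\Pro_{H_{1}\cap H_{2}}x$. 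On the other side, \cref{lemma:W1W2H1H2:SetCharacter}\cref{lemma:W1W2H1H2:SetCharacter:geq:C3} (applied with $\innp{u_{1},u_{2}}=0$) tells us that $C_{3}=W_{1}^{c}\cap W_{2}^{c}$, so $x\in C_{3}$, and then \cref{lem:PW1W2H1H2:IV} gives $\Pro_{W_{1}\cap W_{2}}x=\Pro_{H_{1}\cap H_{2}}x$. Combining, $\Pro_{W_{2}}\Pro_{W_{1}}x=\Pro_{W_{1}\cap W_{2}}x$, completing the proof.

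There is no real obstacle here: the essential analytic content is already buried in \cref{cor:PH2PH1}\cref{cor:PH2PH1:H2H1:=0} (orthogonality collapses the two hyperplane projections into one projection onto the intersection) and in the characterization $C_{3}=W_{1}^{c}\cap W_{2}^{c}$ under orthogonality; the remaining work is merely the case split and the observation via \cref{lem:PW1W2:BAM:u1u2eq0:equiva} that, in the hardest region, $\Pro_{H_{1}}x$ automatically falls outside $W_{2}$.
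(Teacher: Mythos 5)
Your proof is correct and follows essentially the same route as the paper's: reduce to the region $W_{1}^{c}\cap W_{2}^{c}$ via \cref{prop:u1u2geq0}, identify that region with $C_{3}$ using \cref{lemma:W1W2H1H2:SetCharacter}\cref{lemma:W1W2H1H2:SetCharacter:geq:C3} so that \cref{lem:PW1W2H1H2:IV} gives $\Pro_{W_{1}\cap W_{2}}x=\Pro_{H_{1}\cap H_{2}}x$, and then use \cref{lem:PW1W2:BAM:u1u2eq0:equiva}, \cref{remark:FactWFactH}, and \cref{cor:PH2PH1}\cref{cor:PH2PH1:H2H1:=0} to collapse $\Pro_{W_{2}}\Pro_{W_{1}}x$ to $\Pro_{H_{1}\cap H_{2}}x$. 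No gaps.
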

\begin{proof}
According to \cref{prop:u1u2geq0}, it suffices to show that $(\forall x \in W^{c}_{1} \cap W^{c}_{2})$ $\Pro_{W_{2}}\Pro_{W_{1}}x =\Pro_{W_{1} \cap W_{2}}x $. Let $x \in W^{c}_{1} \cap W^{c}_{2} $. Since $\innp{u_{1}, u_{2}}=0$, thus, by \cref{lemma:W1W2H1H2:SetCharacter}\cref{lemma:W1W2H1H2:SetCharacter:geq:C3}, $ W^{c}_{1} \cap W^{c}_{2}=C_{3}$. Hence,  \cref{lem:PW1W2H1H2:IV} leads to $\Pro_{W_{1} \cap W_{2}} x=\Pro_{H_{1} \cap H_{2}} x $. Moreover, by \cref{lem:PW1W2:BAM:u1u2eq0:equiva}, $x \notin W_{2}$ implies that $\Pro_{H_{1}}x \notin W_{2}$. So,   \cref{cor:PH2PH1}\cref{cor:PH2PH1:H2H1:=0} and $x \in W^{c}_{1} \cap W^{c}_{2}$ deduce that $\Pro_{W_{2}} \Pro_{W_{1}} x = \Pro_{W_{2}} \Pro_{H_{1}} x=\Pro_{H_{2}} \Pro_{H_{1}} x =\Pro_{H_{1} \cap H_{2}} x $.
Altogether,  $\Pro_{W_{2}}\Pro_{W_{1}}x  =\Pro_{H_{1} \cap H_{2}} x  =\Pro_{W_{1} \cap W_{2}}x $.
\end{proof}

We conclude the main results obtained in this section below.
\begin{theorem} \label{theorem:Halfspaces}
	Recall that $u_{1}$ and $u_{2}$ are in $\mathcal{H}$, that $\eta_{1}$ and $\eta_{2}$ are  in $\mathbb{R}$,  that $W_{1} := \{x \in \mathcal{H} ~:~ \innp{x,u_{1}} \leq \eta_{1} \}$, and $ W_{2} := \{x \in \mathcal{H} ~:~ \innp{x,u_{2}} \leq \eta_{2} \}$.
	Then  the following statements hold:
	\begin{enumerate}
		\item Suppose that $ u_{1}, u_{2} $ are linearly dependent and that $W_{1} \cap W_{2} \neq \varnothing$. Then $\Pro_{W_{2}}\Pro_{W_{1}} =\Pro_{W_{1} \cap W_{2}}$.  $($See \cref{theorem:W1capW2:u1u2LD}.$)$
		\item  Suppose that $ u_{1}, u_{2} $ are linearly independent.
		\begin{enumerate}
			\item If $\innp{u_{1},u_{2}} =0$, then $\Pro_{W_{2}}\Pro_{W_{1}} =\Pro_{W_{1} \cap W_{2}}$. $($See \cref{theom:PW2PW1:BAM:0}.$)$
			\item If $\innp{u_{1},u_{2}} <0$, then $\gamma :=\frac{|\innp{u_{1}, u_{2}}| }{\norm{u_{1}} \norm{u_{2}}} \in  \left[0,1\right[\,$, and $(\forall x \in \mathcal{H})$  $\norm{ (\Pro_{W_{2}}\Pro_{W_{1}})^{k}x - \Pro_{W_{1} \cap W_{2}}x} \leq \gamma^{k} \norm{x - \Pro_{W_{1} \cap W_{2}}x}$.  $($See \cref{thm:PROW1W2:BAM}.$)$
			\item   If $\innp{u_{1},u_{2}} >0$, then $(\forall x \in \mathcal{H})$ $\Pro_{W_{2}}\Pro_{W_{1}}x \in W_{1} \cap W_{2}$. In particular,  if $ x \in W_{1} \cup W_{2}$, then  $\Pro_{W_{2}}\Pro_{W_{1}}x =\Pro_{W_{1} \cap W_{2}}x $; if $ x \in W^{c}_{1} \cap W^{c}_{2}$ with $ \Pro_{H_{1}}x \in W_{2} $, then  $\Pro_{W_{2}}\Pro_{W_{1}}x=\Pro_{H_{1}}x  =\Pro_{W_{1} \cap W_{2}}x $; if $ x \in W^{c}_{1} \cap W^{c}_{2}$ with $ \Pro_{H_{1}}x \notin W_{2} $, then $\Pro_{W_{2}}\Pro_{W_{1}}x \in W_{1} \cap W_{2} \smallsetminus \{ \Pro_{W_{1} \cap W_{2}}x  \}$.  $($See \cref{prop:>0:inW1W2}.$)$ 
		\end{enumerate}
	\end{enumerate}
\end{theorem}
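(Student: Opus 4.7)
The plan is straightforward: this theorem is a consolidation of the four individual results proved throughout this section, so the proof amounts to routing each case to the appropriate earlier statement. I would open the proof by splitting on whether $u_{1}, u_{2}$ are linearly dependent or independent; that is the same dichotomy used to organize the two subsections, so the structure mirrors the architecture of the section.

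In the linearly dependent case, under the standing assumption $W_{1} \cap W_{2} \neq \varnothing$, I would simply invoke \cref{theorem:W1capW2:u1u2LD} to conclude $\Pro_{W_{2}}\Pro_{W_{1}} = \Pro_{W_{1} \cap W_{2}}$. In the linearly independent case, I would further split on the sign of $\innp{u_{1}, u_{2}}$: for $\innp{u_{1}, u_{2}} = 0$ apply \cref{theom:PW2PW1:BAM:0}; for $\innp{u_{1}, u_{2}} < 0$ invoke \cref{thm:PROW1W2:BAM} (which establishes $\Pro_{W_{2}}\Pro_{W_{1}}$ is a $\gamma$-BAM with $\gamma = \tfrac{|\innp{u_{1},u_{2}}|}{\norm{u_{1}}\norm{u_{2}}} \in [0,1[$ via \cref{lem:H1H2Ineq}, giving the advertised linear convergence bound); and for $\innp{u_{1}, u_{2}} > 0$ invoke \cref{prop:>0:inW1W2}, which directly yields all four sub-claims of that bullet.

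There is no substantive obstacle: each constituent was proved previously in its own right, and the only task in this theorem is to verify that the hypotheses of the main statement in each case exactly match the hypotheses of the earlier result being cited. In particular, in the dependent case I do not need to further enumerate the subcases of \cref{fact:W1capW2:u1u2LD}, since \cref{theorem:W1capW2:u1u2LD} has already handled all of them uniformly under $W_{1} \cap W_{2} \neq \varnothing$; and in the independent case I do not need to re-examine the partition into $C_{1}, C_{2}, C_{3}$, since \cref{thm:PROW1W2:BAM} and \cref{prop:>0:inW1W2} already dispose of it. Thus the proof will be a short case-by-case citation: one line per case, closing with the observation that these cases are exhaustive because either $u_{1}, u_{2}$ are linearly dependent (with $W_{1} \cap W_{2} \neq \varnothing$ being the only nontrivial scenario, the empty-intersection cases being excluded by the standing setup) or they are linearly independent with $\innp{u_{1}, u_{2}}$ taking one of three possible signs.
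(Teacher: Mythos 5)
Your proposal matches the paper exactly: \cref{theorem:Halfspaces} is stated in the paper as a summary with no separate proof, each clause being justified by citing \cref{theorem:W1capW2:u1u2LD}, \cref{theom:PW2PW1:BAM:0}, \cref{thm:PROW1W2:BAM}, and \cref{prop:>0:inW1W2} respectively, which is precisely the case-by-case routing you describe. No gaps; the hypotheses of each cited result align with the corresponding case as you note.
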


\section{Projection onto intersection of  hyperplane and halfspace} \label{sec:Projection:HyperpnaeHalfspace}

\subsection*{Karush-Kuhn-Tucker  conditions}
In order to deduce the formula of projection onto intersection of hyperplane and halfspace, we need the following   Karush-Kuhn-Tucker conditions  to characterize the optimal solution of  convex optimization with finitely many equality and inequality constraints in Hilbert spaces.

Before showing the main result \cref{them:KKT:EQINEQ:Hilbert} in this subsection, we see first  the following result.

\begin{lemma} \label{lemma:Riesz-FrechetRep}
	Let $F:\mathcal{H} \to \mathbb{R}$ be affine and continuous. Then there exists a unique vector $u \in \mathcal{H}$ such that 
	\begin{align*}
	(\forall x \in \mathcal{H}) \quad F(x)=\innp{x,u} + F(0).
	\end{align*}
\end{lemma}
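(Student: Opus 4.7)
The plan is to reduce the statement to the classical Riesz--Fréchet representation theorem by replacing $F$ with its ``linearized'' version $G := F - F(0)$.

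First I would define $G : \mathcal{H} \to \mathbb{R}$ by $G(x) := F(x) - F(0)$, so that $G$ is continuous (as a translate of a continuous function), $G(0) = 0$, and $G$ inherits affinity from $F$: for all $\lambda \in \mathbb{R}$ and $x,y \in \mathcal{H}$,
\begin{align*}
G(\lambda x + (1-\lambda)y) = F(\lambda x + (1-\lambda)y) - F(0) = \lambda F(x) + (1-\lambda) F(y) - F(0) = \lambda G(x) + (1-\lambda) G(y).
\end{align*}

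Next I would verify that $G$ is in fact $\mathbb{R}$-linear. Homogeneity is immediate by taking $y=0$ in the affine identity: $G(\lambda x) = \lambda G(x) + (1-\lambda) G(0) = \lambda G(x)$. For additivity, apply homogeneity with $\lambda = 2$ and then the affine identity with $\lambda = 1/2$:
\begin{align*}
G(x+y) = 2\,G\!\left(\tfrac{1}{2}x + \tfrac{1}{2}y\right) = 2\left(\tfrac{1}{2} G(x) + \tfrac{1}{2} G(y)\right) = G(x) + G(y).
\end{align*}
Thus $G \in \mathcal{B}(\mathcal{H}, \mathbb{R})$.

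The final step is to invoke the Riesz--Fréchet representation theorem, which yields a unique $u \in \mathcal{H}$ such that $G(x) = \innp{x,u}$ for every $x \in \mathcal{H}$; consequently $F(x) = \innp{x,u} + F(0)$. Uniqueness transfers directly: if $u,u' \in \mathcal{H}$ both satisfy the representation, then $\innp{x, u - u'} = 0$ for all $x \in \mathcal{H}$, forcing $u = u'$ upon taking $x = u - u'$. There is no real obstacle here; the only subtlety is confirming affinity plus $G(0)=0$ forces full linearity, which I have handled above via the two-step homogeneity/additivity argument.
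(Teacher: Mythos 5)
Your proposal is correct and follows essentially the same route as the paper: subtract $F(0)$ to obtain a map vanishing at the origin, verify that affinity forces linearity, note continuity, and invoke the Riesz--Fr\'echet representation theorem (the paper cites \cite[Fact~2.24]{BC2017}). The only difference is cosmetic --- you establish homogeneity and additivity in two separate steps, while the paper verifies $T(\alpha x+\beta y)=\alpha T(x)+\beta T(y)$ in one combined computation.
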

\begin{proof}
Define 	$(\forall x \in \mathcal{H}) $  $T(x):=F(x)-F(0)$.  Let $x \in \mathcal{H}$, $y \in \mathcal{H}$, $\alpha \in \mathbb{R}$, and $\beta \in \mathbb{R}$. Because $F$ is affine, we have that  $ T(\alpha x+\beta y) = F(\alpha x+\beta y)-F(0)=\frac{1}{2} F(2\alpha x) +\frac{1}{2} F(2\beta y) -F(0)=\frac{1}{2} (F(2\alpha x) + F(0) ) +\frac{1}{2} (F(2\beta x) + F(0) )-2F(0)=F(\alpha x+(1-\alpha)0) +F(\beta y +(1-\beta) 0) -2F(0) =\alpha (F(x) -F(0)) +\beta (F(y) -F(0)) =\alpha T(x) +\beta T(y) $, which implies that $T$ is linear. 

Moreover, because $F$ is continuous implies that $T$ is continuous, we know that $T\in \mathcal{B} (\mathcal{H}, \mathbb{R})$.  Now, apply \cite[Fact~2.24]{BC2017} with $f=T$ to obtain that there exists a unique vector $u \in \mathcal{H}$ such that 	$(\forall x \in \mathcal{H}) $ $ T(x)=\innp{x,u}$, which, by the definition of $T$, yields that $	(\forall x \in \mathcal{H}) $ $ F(x)=\innp{x,u} + F(0)$.
\end{proof}

Because, 
differentiable function must be continuous,
by \cref{lemma:Riesz-FrechetRep}, the condition \enquote{$h_{j}$ is affine} in \cite[Page~244]{BV2004}  is equivalent to \enquote{$h_{j}(x) =\innp{u_{j}, x} - \eta_{j}$ where $u_{j}  \in \mathcal{H}$ and $\eta_{j}  \in \mathbb{R} $}.
Therefore, the following KKT conditions are generalization of the version shown in \cite[Page~244]{BV2004} from finite-dimensional spaces to Hilbert spaces and from differentiable functions to subdifferentiable functions.  Actually, writing the affine function $h_{j}$ as the form $h_{j}(x) =\innp{u_{j}, x} - \eta_{j}$ plays a critical role in the proof of \cref{them:KKT:EQINEQ:Hilbert}.

 The main idea of the proof of \cref{them:KKT:EQINEQ:Hilbert} is from \cite[Proposition~27.21]{BC2017} that characterizes the optimal solution of  convex optimization with inequality constraints in Hilbert spaces, but
 \cref{them:KKT:EQINEQ:Hilbert} is not a direct result from \cite[Proposition~27.21]{BC2017}, because if 
 there was equality constraint  $g(x)=0$ in \cite[Proposition~27.21]{BC2017}, then $(\lev_{< 0} g) \cap (\lev_{< 0} -g) =\varnothing$ implies that the Slater condition (27.50) in \cite[Proposition~27.21]{BC2017} fails.
 
\begin{theorem} \label{them:KKT:EQINEQ:Hilbert}
	Let $s$ and $t$ be in $\mathbb{N} \smallsetminus \{0\}$, set $\I:=\{1, \ldots, s\}$ and $\J :=\{1, \ldots, t \}$, and let $\bar{x} \in \mathcal{H}$.  Suppose  that   $f$ and $(g_{i})_{i \in \I}$ are functions in $\Gamma_{0} (\mathcal{H})$. Set $(\forall j \in \J)$ $(\forall x \in \mathcal{H})$ $h_{j}(x) :=\innp{u_{j}, x} - \eta_{j}$ where $u_{j}  \in \mathcal{H}$ and $\eta_{j}  \in \mathbb{R} $. Assume that
	\begin{subequations} \label{eq:them:KKT:EQINEQ:Hilbert:gi:hi}
		\begin{align}  
		& (\forall i \in \I) \quad \lev_{\leq 0} g_{i} \subseteq \inte \dom g_{i},  \label{eq:them:KKT:EQINEQ:Hilbert:gi} \\
		& \left( \cap_{i \in \I} \lev_{< 0} g_{i} \right) \cap \left( \cap_{j \in \J} \ker h_{j} \right) \neq \varnothing, \quad \text{and} \label{eq:them:KKT:EQINEQ:Hilbert:noempty}\\
		& 0 \in \sri \left(   \left( \cap_{i \in \I} \lev_{\leq 0} g_{i}\right) \cap  \left( \cap_{j \in \J} \ker h_{j} \right)   -\dom f \right). \label{eq:them:KKT:EQINEQ:Hilbert:gi:hi:sri}
		\end{align}
	\end{subequations}
 Consider the problem
	\begin{align} \label{eq:problem:KKT}
	\minimize ~&f(x)\\
	\subject ~ & g_{i}(x) \leq 0, ~i \in \I \nonumber\\
	& h_{j}(x) =0,~ j \in \J. \nonumber
	\end{align}
	Then $\bar{x}$ is a solution to \cref{eq:problem:KKT} if and only if 
	\begin{align} \label{eq:them:KKT:EQINEQ:Hilbert:EQ:conditions}
	(\exists (\bar{\lambda}_{i})_{i \in \I} \in \mathbb{R}^{s}_{+}) ~(\exists (\bar{v}_{i})_{i \in \I} \in \times_{i \in \I} \partial g_{i} (\bar{x}) )~(\exists (\bar{\beta}_{j})_{j \in \J} \in \mathbb{R}^{t}) 
	\begin{cases}
	&-\sum_{ i \in \I} \bar{\lambda}_{i} \bar{v}_{i} -\sum_{j \in \J} \bar{\beta}_{j}u_{j}  \in  \partial f (\bar{x}), \\
	&(\forall i \in \I)  \quad  g_{i}(\bar{x}) \leq 0, \bar{\lambda}_{i}  g_{i}(\bar{x}) = 0,\\
	&(\forall j \in \J)   \quad    h_{j}(\bar{x}) = 0,
	\end{cases}
	\end{align}
in which case $ (\bar{\lambda}_{i})_{i \in \I}  \times (\bar{\beta}_{j})_{j \in \J}$ are Lagrange multipliers associated with $\bar{x}$, and $\bar{x}$ solves the problem 
\begin{align} \label{eq:Problem:fgh}
\min_{x \in \mathcal{H}} f(x) + \sum_{ i \in \I} \bar{\lambda}_{i} g_{i} + \sum_{j \in \J} \bar{\beta}_{j} h_{j}.
\end{align}
	
	Moreover, if  $f$ and $(g_{i})_{i \in \I}$ are  G\^ateaux differentiable at $\bar{x}$, then \cref{eq:them:KKT:EQINEQ:Hilbert:EQ:conditions} becomes that 
		\begin{align}\label{eq:them:KKT:EQINEQ:Hilbert}
		\left( \exists (\bar{\lambda}_{i})_{i \in \I} \in \mathbb{R}^{s}_{+} \right) ~\left(\exists (\bar{\beta}_{j})_{j \in \J} \in \mathbb{R}^{t} \right)~
		\begin{cases}
		&\nabla f(\bar{x})  + \sum_{i \in \I} \bar{\lambda}_{i} \nabla g_{i}(\bar{x})    +\sum_{j \in \J} \bar{\beta}_{j} u_{j} =0 \\
		&(\forall i \in \I)  \quad  g_{i}(\bar{x}) \leq 0, \bar{\lambda}_{i}  g_{i}(\bar{x}) = 0, \text{ and}\\
		&(\forall j \in \J)   \quad    h_{j}(\bar{x}) = 0.
		\end{cases}
		\end{align}
\end{theorem}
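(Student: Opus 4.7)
The plan is to recast $\bar{x}$ being a solution of \cref{eq:problem:KKT} as $\bar{x}$ being an unconstrained minimizer of $f + \iota_{C}$, where
\[
C := \Bigl(\bigcap_{i \in \I} \lev_{\leq 0} g_{i}\Bigr) \cap \Bigl(\bigcap_{j \in \J} \ker h_{j}\Bigr)
\]
is the feasible set, and then to expand $\N_{C}(\bar{x})$ using the three qualifications \cref{eq:them:KKT:EQINEQ:Hilbert:gi}--\cref{eq:them:KKT:EQINEQ:Hilbert:gi:hi:sri} in sequence. By Fermat's rule, $\bar{x}$ solves \cref{eq:problem:KKT} if and only if $0 \in \partial(f+\iota_{C})(\bar{x})$. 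Hypothesis \cref{eq:them:KKT:EQINEQ:Hilbert:gi:hi:sri} is precisely the strong-relative-interior qualification that activates the sri sum rule of \cite{BC2017}, delivering $\partial(f+\iota_{C})(\bar{x}) = \partial f(\bar{x}) + \N_{C}(\bar{x})$; so it remains to identify $\N_{C}(\bar{x})$.

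Next I would write $C = A \cap B$ with $A := \bigcap_{i \in \I} \lev_{\leq 0} g_{i}$ and $B := \bigcap_{j \in \J} \ker h_{j}$. Hypotheses \cref{eq:them:KKT:EQINEQ:Hilbert:gi} and \cref{eq:them:KKT:EQINEQ:Hilbert:noempty} together supply a point of $\inte A \cap B$, since the continuity of each $g_{i}$ on $\inte \dom g_{i}$ promotes $\lev_{<0} g_{i}$ into $\inte \lev_{\leq 0} g_{i}$; this Slater-type qualification yields $\N_{A \cap B}(\bar{x}) = \N_{A}(\bar{x}) + \N_{B}(\bar{x})$. For $\N_{A}(\bar{x})$, I would invoke the argument underlying \cite[Proposition~27.21]{BC2017}: under the same hypotheses,
\[
\N_{A}(\bar{x}) = \Bigl\{ \sum_{i \in \I} \lambda_{i} v_{i} \;:\; \lambda_{i} \in \mathbb{R}_{+},\, v_{i} \in \partial g_{i}(\bar{x}),\, \lambda_{i} g_{i}(\bar{x}) = 0 \Bigr\}.
\]
For $\N_{B}(\bar{x})$, each $h_{j}$ is affine with linear part $\innp{\cdot,u_{j}}$, so $B$ is a closed affine subspace whose translation subspace is $(\spn\{u_{j} : j \in \J\})^{\perp}$; consequently $\N_{B}(\bar{x}) = \spn\{u_{j} : j \in \J\}$ at every $\bar{x} \in B$.

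Assembling the three pieces, the inclusion $0 \in \partial f(\bar{x}) + \N_{C}(\bar{x})$ rewrites as the existence of multipliers $(\bar{\lambda}_{i}) \in \mathbb{R}_{+}^{s}$, subgradients $(\bar{v}_{i}) \in \times_{i \in \I} \partial g_{i}(\bar{x})$, and scalars $(\bar{\beta}_{j}) \in \mathbb{R}^{t}$ satisfying $-\sum_{i}\bar{\lambda}_{i}\bar{v}_{i} - \sum_{j}\bar{\beta}_{j} u_{j} \in \partial f(\bar{x})$, together with primal feasibility and the complementary slackness $\bar{\lambda}_{i} g_{i}(\bar{x}) = 0$: exactly \cref{eq:them:KKT:EQINEQ:Hilbert:EQ:conditions}. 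The Lagrangian assertion that $\bar{x}$ solves \cref{eq:Problem:fgh} is then equivalent, since the subgradient inclusion is precisely Fermat's rule $0 \in \partial\bigl(f + \sum_{i} \bar{\lambda}_{i} g_{i} + \sum_{j} \bar{\beta}_{j} h_{j}\bigr)(\bar{x})$ with $\nabla h_{j} \equiv u_{j}$. In the G\^ateaux differentiable case, $\partial f(\bar{x}) = \{\nabla f(\bar{x})\}$ and $\partial g_{i}(\bar{x}) = \{\nabla g_{i}(\bar{x})\}$ collapse \cref{eq:them:KKT:EQINEQ:Hilbert:EQ:conditions} to the gradient equation \cref{eq:them:KKT:EQINEQ:Hilbert}.

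The main obstacle is to justify the two sum rules under the sri hypothesis rather than under a full Slater condition for the combined system: naively encoding each equality $h_{j}=0$ as the pair $\pm h_{j} \leq 0$ destroys strict feasibility, as the remarks preceding the theorem emphasise. The work is to exploit the affine, hyperplane nature of each $\ker h_{j}$ so that the only strict-feasibility requirement falls on the inequality constraints $g_{i}$, and to check that \cref{eq:them:KKT:EQINEQ:Hilbert:gi:hi:sri} propagates through the decomposition $C = A \cap B$ to deliver the required qualifications at each step, with \cref{eq:them:KKT:EQINEQ:Hilbert:gi} ensuring $\partial g_{i}(\bar{x}) \neq \varnothing$ at every active index.
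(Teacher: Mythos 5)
Your proposal is correct and follows essentially the same route as the paper's proof: Fermat's rule under the sri qualification \cref{eq:them:KKT:EQINEQ:Hilbert:gi:hi:sri} (the paper cites \cite[Proposition~27.8]{BC2017} for exactly this), followed by a decomposition of $\N_{C}(\bar{x})$ in which the Slater-type interiority supplied by \cref{eq:them:KKT:EQINEQ:Hilbert:gi}--\cref{eq:them:KKT:EQINEQ:Hilbert:noempty} handles the inequality block via \cite[Lemma~27.20]{BC2017} and the affine structure handles the equality block. The only differences are cosmetic: you skip the paper's preliminary translation by a point $\bar{z}\in\cap_{j\in\J}\ker h_{j}$, and you compute $\N_{B}(\bar{x})=(B-B)^{\perp}=\spn\{u_{j}: j\in\J\}$ in one step rather than summing the individual $\N_{L_{j}}(\bar{x})=\spn\{u_{j}\}$ through a second application of the sum rule, which slightly shortens the argument but changes nothing essential.
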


\begin{proof}
	We split the proof into the following five steps.
	
	\textbf{Step~1:} By \cref{eq:them:KKT:EQINEQ:Hilbert:noempty}, $\left( \cap_{j \in \J} \ker h_{j} \right) \neq \varnothing$. Take $\bar{z} \in \cap_{j \in J} \ker h_{j}$. Then for every $j \in \J$, by definition of $h_{j}$,  
	\begin{align} \label{eq:them:KKT:EQINEQ:Hilbert:hjujx}
	h_{j}(x +\bar{z}) =\innp{u_{j}, x}.
	\end{align}
	 Define 
	 \begin{subequations} \label{eq:them:KKT:EQINEQ:Hilbert:T:tilde}
	 	\begin{align}
	 &(\forall x \in \mathcal{H}) \quad T(x) :=x +\bar{z}, \label{eq:them:KKT:EQINEQ:Hilbert:T}\\
	 	\tilde{f} :=f \circ T, \quad (\forall i \in \I) &~\tilde{g_{i}} :=g_{i} \circ T \quad \text{and} \quad (\forall j \in \J) ~ \tilde{h}_{j} := h_{j}  \circ T =\innp{u_{j}, \cdot}. \label{eq:them:KKT:EQINEQ:Hilberttilde}
	 	\end{align}
	 \end{subequations}
 Then  we have  that $\tilde{f}$ and  $(\tilde{g}_{i})_{i \in \I}$  are  functions in $\Gamma_{0} (\mathcal{H})$  with $\dom \tilde{f} =\dom f -\bar{z}$, and that
 \begin{align} \label{eq:partial:fgh:eq:partial}
 \partial f (\bar{x}) = \partial \tilde{f} (\bar{x} -\bar{z}), ~ (\forall i \in \I) ~\partial g_{i} (\bar{x}) = \partial \tilde{g}_{i} (\bar{x} -\bar{z}), \text{ and } (\forall j \in \J) ~\nabla h_{j} (\bar{x}) = \nabla \tilde{h}_{j} (\bar{x}-\bar{z}) =u_{j}.
 \end{align}
	Because $(\forall i \in \I)$ $\lev_{\leq 0} \tilde{g}_{i} = \lev_{\leq 0} g_{i} -\bar{z} $,  $\inte \dom \tilde{g}_{i} = \inte \dom g_{i} -\bar{z} $, $\cap_{i \in \I} \lev_{< 0} \tilde{g}_{i}  = \cap_{i \in \I} \lev_{< 0} g_{i} -\bar{z}$ and $ \cap_{j \in \J} \ker \tilde{h}_{j}  =\cap_{j \in \J} \ker h_{j}  -\bar{z}$, by \cref{eq:them:KKT:EQINEQ:Hilbert:gi:hi}, we have that
	\begin{subequations} \label{eq:them:KKT:EQINEQ:Hilbert:gi:hi:tilde}
		\begin{align}  
		& (\forall i \in \I) \quad \lev_{\leq 0} \tilde{g}_{i} \subseteq \inte \dom \tilde{g}_{i}, \label{eq:them:KKT:EQINEQ:Hilbert:gi:tilde} \\
		&\left( \cap_{i \in \I} \lev_{< 0} \tilde{g}_{i} \right) \cap \left( \cap_{j \in \J} \ker \tilde{h}_{j} \right) \neq \varnothing, \text{ and} \label{eq:them:KKT:EQINEQ:Hilbert:noempty:tilde}\\
		& 0 \in \sri \left(   \left( \cap_{i \in \I} \lev_{\leq 0} \tilde{g}_{i}\right) \cap  \left( \cap_{j \in \J} \ker \tilde{h}_{j} \right)   -\dom \tilde{f} \right). \label{eq:them:KKT:EQINEQ:Hilbert:sri:tilde}
		\end{align}
	\end{subequations}
	
	Let $y \in \mathcal{H}$. Substitute $x$ in \cref{eq:problem:KKT} by $y+\bar{z}$ to obtain that 
	\begin{align} \label{eq:problem:KKT:xbarz}
	\minimize ~&\tilde{f}(y)\\
	\subject ~ & \tilde{g}_{i}(y) \leq 0, ~i \in \I \nonumber\\
	& \tilde{h}_{j}(y) =0,~ j \in \J. \nonumber
	\end{align}
	Hence, 
	\begin{align} \label{eq:them:KKT:EQINEQ:Hilbert:EQ1}
	\bar{x}	\text{ is a solution to \cref{eq:problem:KKT}} \Leftrightarrow  \bar{x} -\bar{z}	\text{ is a solution to \cref{eq:problem:KKT:xbarz}}.
	\end{align}

	\textbf{Step~2:} In this part, we characterize the solutions to \cref{eq:problem:KKT:xbarz}. Set 
	\begin{align} \label{eq:them:KKT:EQINEQ:Hilbert:C}
	C:=( \cap_{i \in \I} W_{i} ) \cap ( \cap_{j \in \J} L_{j} ),
	\end{align}
	where $ (\forall i \in \I)$  $W_{i} :=\lev_{\leq 0} \tilde{g}_{i} = \{ y \in \mathcal{H} ~:~ \tilde{g}_{i}(y) \leq 0 \}  $ and $(\forall j \in \J)$  $L_{j} :=\ker \tilde{h}_{j} \stackrel{\cref{eq:them:KKT:EQINEQ:Hilberttilde}}{=} \ker u_{j}$.

	Recall that $C$ is a  closed and convex subset of $\mathcal{H}$ and $\tilde{f} \in \Gamma_{0} (\mathcal{H})$. Then \cref{eq:them:KKT:EQINEQ:Hilbert:sri:tilde} and \cite[Proposition~27.8]{BC2017} imply 
	\begin{align} \label{eq:them:KKT:EQINEQ:Hilbert:EQ2:partial}
	\bar{x} -\bar{z}	\text{ is a solution to \cref{eq:problem:KKT:xbarz}} \Leftrightarrow  \left(\exists w \in \N_{C}(\bar{x} -\bar{z})\right),  -w \in \partial \tilde{f}(\bar{x} -\bar{z}).
	\end{align}

	\textbf{Step~3:} Denote by $\bar{y} :=\bar{x} -\bar{z}$. We characterize the set $\N_{C}(\bar{y})$ in this part. As a consequence of \cite[Definition~6.38]{BC2017}, if $\bar{y} \notin C$, then $\N_{C}(\bar{y}) =\varnothing$. Suppose that $\bar{y} \in C $. Set 
	\begin{align*}
	\I_{+} :=\{i \in \I ~:~ \tilde{g}_{i}(\bar{y}) =0 \} \quad \text{and} \quad \I_{-} :=\{i \in \I ~:~ \tilde{g}_{i}(\bar{y}) < 0 \}.
	\end{align*}
	Because $\bar{y} \in C  \subseteq \cap_{i \in \I} W_{i}$,  we know that 
	\begin{align} \label{eq:them:KKT:EQINEQ:Hilbert:I}
	\I =\I_{+} \cup \I_{-}.
	\end{align} 
	Let $i \in \I$.  Then employing \cref{eq:them:KKT:EQINEQ:Hilbert:gi:tilde} in the last inclusion below, we know that 
	\begin{align} \label{eq:ybar:interdomtildegi}
	\bar{y} \in C \subseteq W_{i} = \lev_{\leq 0} \tilde{g}_{i} \subseteq \inte \dom \tilde{g}_{i}.
	\end{align}
Note that  $\tilde{g}_{i}$ is convex implies that $\dom \tilde{g}_{i}$ is convex. Then, according to \cite[Propostion~6.45]{BC2017}, \cref{eq:ybar:interdomtildegi} implies that 
	\begin{align}  \label{eq:them:KKT:EQINEQ:Hilbert:Ngi}
	(\forall i \in \I)	 \quad \N_{\dom \tilde{g}_{i}} \bar{y} =\{0\}.
	\end{align}	
	Hence, using \cite[Proposition~6.2(i)]{BC2017} in the second identity below, we have that 
	\begin{align} \label{eq:them:KKT:EQINEQ:Hilbert:Ntildegiy}
\N_{\dom \tilde{g}_{i}} \bar{y} \cup (\cone \partial \tilde{g}_{i}(\bar{y}) ) \stackrel{\cref{eq:them:KKT:EQINEQ:Hilbert:Ngi}}{=} \{0\} \cup (\cone \partial \tilde{g}_{i}(\bar{y}) ) = \{0\} \cup (\mathbb{R}_{++} \partial \tilde{g}_{i}(\bar{y}) )=\mathbb{R}_{+} \partial \tilde{g}_{i}(\bar{y}).
	\end{align}
	 Moreover, since
	\cref{eq:them:KKT:EQINEQ:Hilbert:noempty:tilde} implies that $(\forall i \in \I) $ $\lev_{< 0} \tilde{g}_{i} \neq \varnothing$, thus combining \cite[Lemma~27.20]{BC2017} with \cref{eq:ybar:interdomtildegi}, \cref{eq:them:KKT:EQINEQ:Hilbert:Ngi} and \cref{eq:them:KKT:EQINEQ:Hilbert:Ntildegiy}, we obtain that
	\begin{align}  \label{eq:them:KKT:EQINEQ:Hilbert:NWi}
	(\forall i \in \I) \quad  \N_{W_{i}} (\bar{y}) =\begin{cases}
	\mathbb{R}_{+} \partial \tilde{g}_{i}(\bar{y}), \quad & \text{if } i \in \I_{+}; \\
	\{0\}, \quad & \text{if } i \in \I_{-}.
	\end{cases}
	\end{align}
	Because $(\forall j \in \J)$ $L_{j} =\ker u_{j}$ is a linear subspace, $L_{j} -L_{j}  =\ker u_{j}$, $\bar{y} \in C \subseteq \cap_{j \in \J} L_{j} $, using   \cite[Example~6.43]{BC2017},
	 we have that
	\begin{align} \label{eq:them:KKT:EQINEQ:Hilbert:NHj}
	(\forall j \in \J) \quad \N_{L_{j}} (\bar{y}) =(L_{j} -L_{j} )^{\perp}=( \ker u_{j})^{\perp}= \spn \{ u_{j} \}.
	\end{align}
	Recall that  $\{  \tilde{g}_{i} \}_{i \in \I} \subseteq \Gamma_{0} (\mathcal{H})$. As a consequence of \cite[Corollary~8.39(ii)]{BC2017}, $	(\forall i \in \I) $   $\cont \tilde{g}_{i}=\inte \dom \tilde{g}_{i}$. Combine this with \cref{eq:them:KKT:EQINEQ:Hilbert:gi:tilde} to know that $(\forall i \in \I) $ $\tilde{g}_{i}$ is continuous on $ \lev_{< 0} \tilde{g}_{i} \subseteq \lev_{\leq 0} \tilde{g}_{i} \subseteq \inte \dom \tilde{g}_{i}$. Moreover, because \cref{eq:them:KKT:EQINEQ:Hilbert:noempty:tilde} implies that $(\forall i \in \I)$ $\lev_{<0} \tilde{g}_{i} \neq \varnothing$, by \cite[Corollary~8.47(i)]{BC2017},  $(\forall i \in \I)$ $\inte W_{i}= \lev_{<0} \tilde{g}_{i} $, which yields that  
	\begin{align} \label{eq:capLjcapintWi:neqnothing}
	(\cap_{j \in \J} L_{j}) \cap (\cap_{i \in \I} \inte W_{i}) =(\cap_{j \in \J} L_{j} )\cap (\cap_{i \in \I}  \lev_{<0} \tilde{g}_{i} ) \stackrel{\cref{eq:them:KKT:EQINEQ:Hilbert:noempty:tilde}}{\neq} \varnothing. 
	\end{align}
	  Apply \cite[Example~16.50(iv)]{BC2017} with $m=s+1$, $(\forall i \in \I)$ $f_{i} = \iota_{W_{i}}$ and $f_{s+1} = \iota_{\cap^{t}_{j =1} L_{j}}$ to obtain that 
	\begin{align} \label{eq:them:KKT:EQINEQ:Hilbert:partialSum1}
	\partial \left(  \sum_{ i \in \I}   \iota_{W_{i} } + \iota_{ \cap_{j \in \J} L_{j} } \right) (\bar{y})   = \left( \big( \sum_{ i \in \I}  \partial \iota_{  W_{i} } \big) + \partial \iota_{ \cap_{j \in \J} L_{j} } \right) (\bar{y}) .
	\end{align}
	In addition, note that $(\forall j \in \J)$ $\sum^{j}_{k=1} L^{\perp}_{k} =\sum^{j}_{k=1} (\ker u_{k})^{\perp} = \sum^{j}_{k=1}  \spn \{u_{k} \} $ is a closed finite-dimensional linear subspace of $\mathcal{H}$. Taking  \cite[Lemma~4.3(ii)]{BOyW2020BAM} into account, we see that
	\begin{align} \label{eq:j:cap:closed:L}
	(\forall j \in \J \smallsetminus \{1\}) \quad  L_{j} + \cap^{j-1}_{k=1} L_{k} \text{  is closed}.
	\end{align} 
	Notice that $(\forall j \in \J)$ $\dom \iota_{ L_{j} } = L_{j} =\ker u_{j}$ is a linear subspace. Use   \cref{eq:j:cap:closed:L}, and 
 apply  \cite[Example~16.50(iii)]{BC2017} with $m=t$, $( \forall k \in \{1, \ldots, m \} )$ $f_{k} =\iota_{L_{k}}$ to obtain 
	\begin{align} \label{eq:them:KKT:EQINEQ:Hilbert:partialSum:iota:Lk}
	\partial ( \sum_{j \in \J} \iota_{ L_{j} } ) (\bar{y})  = \sum_{j \in \J} \partial \iota_{ L_{j} }  (\bar{y}). 
	\end{align}  
	Now 
	\begin{align}
	\N_{C} (\bar{y} )&~=~ \partial \iota_{C} (\bar{y})  \quad \text{(by \cite[Example~16.13]{BC2017}  )}  \nonumber\\
	&\stackrel{\cref{eq:them:KKT:EQINEQ:Hilbert:C}}{=} \partial \iota_{( \cap_{i \in \I} W_{i} ) \cap ( \cap_{j \in \J} L_{j} )} (\bar{y}) \nonumber\\
	&~=~ \partial \left( ( \sum_{i \in \I}\iota_{ W_{i} } )+ \iota_{ \cap_{j \in \J} L_{j} } \right) (\bar{y})  \quad \text{(by definition of indicator function)}\nonumber\\
	& \stackrel{\cref{eq:them:KKT:EQINEQ:Hilbert:partialSum1}}{=} \left( \big(\sum_{i \in \I} \partial \iota_{  W_{i} }  \big)+ \partial \iota_{ \cap_{j \in \J} L_{j} } \right) (\bar{y}) \nonumber\\
	& \stackrel{\cref{eq:them:KKT:EQINEQ:Hilbert:I}}{=}  \sum_{i \in \I_{+}} \partial \iota_{  W_{i} }(\bar{y})  +\sum_{i \in \I_{-}} \partial \iota_{  W_{i} }(\bar{y})  + \partial \iota_{ \cap_{j \in \J} L_{j} } (\bar{y}) \nonumber\\
	& ~=~  \sum_{i \in \I_{+}} \partial \iota_{  W_{i} }(\bar{y})  +\sum_{i \in \I_{-}} \partial \iota_{  W_{i} }(\bar{y})  + \partial ( \sum_{j \in \J} \iota_{ L_{j} } ) (\bar{y}) \quad \text{(by definition of indicator function)} \nonumber\\
	& \stackrel{\cref{eq:them:KKT:EQINEQ:Hilbert:partialSum:iota:Lk}}{=}  \sum_{i \in \I_{+}} \partial \iota_{ W_{i} }(\bar{y})  +\sum_{i \in \I_{-}} \partial \iota_{  W_{i} }(\bar{y})  + \sum_{j \in \J} \partial \iota_{ L_{j} }  (\bar{y})  \nonumber\\
	&~=~  \sum_{i \in \I_{+}}  \N_{ W_{i} }(\bar{y})  +\sum_{i \in \I_{-}} \N_{  W_{i} }(\bar{y})  + \sum_{j \in \J}  \N_{ L_{j} }  (\bar{y})   \quad \text{(by \cite[Example~16.13]{BC2017} )} \nonumber\\
	&~ =~ \sum_{i \in \I_{+}}  \mathbb{R}_{+} \partial \tilde{g}_{i}(\bar{y}) + \sum_{j \in \J}  \spn \{ u_{j} \},  \label{eq:NC=sum}  
	\end{align}
	where the last identity follows from \cref{eq:them:KKT:EQINEQ:Hilbert:NWi} and \cref{eq:them:KKT:EQINEQ:Hilbert:NHj}.
	Combine \cref{eq:NC=sum} with \cref{eq:them:KKT:EQINEQ:Hilbert:EQ2:partial} to obtain that $\bar{y} $ is a solution to \cref{eq:problem:KKT:xbarz}  if and only if
	\begin{align} \label{eq:KKTconditions:tilde}
	(\exists (\bar{\lambda}_{i})_{i \in \I} \in \mathbb{R}^{s}_{+}) ~(\exists(\bar{v}_{i})_{i \in \I} \in \times_{i \in \I} \partial \tilde{g}_{i} (\bar{y}) )~(\exists (\bar{\beta}_{j})_{j \in \J} \in \mathbb{R}^{t}) 
	\begin{cases}
	&-\sum_{ i \in \I} \bar{\lambda}_{i} \bar{v}_{i} -\sum_{j \in \J} \bar{\beta}_{j}u_{j}  \in  \partial \tilde{f} (\bar{y}) \\
	&(\forall i \in \I)  \quad  \tilde{g}_{i}(\bar{y}) \leq 0, \bar{\lambda}_{i}  \tilde{g}_{i}(\bar{y}) = 0,\\
	&(\forall j \in \J)   \quad    \tilde{h}_{j}(\bar{y}) = 0,
	\end{cases}
	\end{align}
	Note that  we have  only $\I_{+}$  in \cref{eq:NC=sum}, but   $\I$ in \cref{eq:KKTconditions:tilde}.  In fact, 
	for every $ i \in \I $, if $i \in \I_{+}$, then by definition of  $\I_{+}$, $\tilde{g}_{i}(\bar{y})=0$, so $\bar{\lambda}_{i} \tilde{g}_{i} (\bar{y}) =0$. Otherwise, if $i \in \I_{-}$, then to satisfy \cref{eq:NC=sum} and $-\sum_{ i \in \I}\lambda_{i} v_{i} -\sum_{j \in \J} \beta_{j}u_{j}  \in  \partial \tilde{f} (\bar{y})$, we set  $\lambda_{i} =0$, which implies $\lambda_{i} \tilde{g}_{i} (\bar{y}) =0$ as well. 
	
	Using	\cref{eq:them:KKT:EQINEQ:Hilbert:T:tilde} and \cref{eq:partial:fgh:eq:partial}, we know that \cref{eq:KKTconditions:tilde} is equivalent to the desired \cref{eq:them:KKT:EQINEQ:Hilbert:EQ:conditions}.

	Therefore, by   \cref{eq:them:KKT:EQINEQ:Hilbert:EQ1},  $\bar{x} $ is a solution to \cref{eq:problem:KKT} if and only if \cref{eq:them:KKT:EQINEQ:Hilbert:EQ:conditions} holds.

	\textbf{Step~4:} Suppose that $\bar{x}$ is a solution of \cref{eq:problem:KKT}. Then by  Step~3 above,  \cref{eq:them:KKT:EQINEQ:Hilbert:EQ:conditions} holds. Now, for the 
	$  (\bar{\lambda}_{i})_{i \in \I} \in \mathbb{R}^{s}_{+}$,  $ (\bar{v}_{i})_{i \in \I} \in \times_{i \in \I} \partial g_{i} (\bar{x})$, and $  (\bar{\beta}_{j})_{j \in \J} \in \mathbb{R}^{t}$ in  \cref{eq:them:KKT:EQINEQ:Hilbert:EQ:conditions},  combine \cref{eq:them:KKT:EQINEQ:Hilbert:EQ:conditions} and the definition of subdifferential to obtain that 
	\begin{align*}
0 \in \partial f (\bar{x})+\sum_{ i \in \I} \bar{\lambda}_{i} \bar{v}_{i} + \sum_{j \in \J} \bar{\beta}_{j}u_{j} \subseteq \partial f (\bar{x})+\sum_{ i \in \I} \bar{\lambda}_{i} \partial g_{i} (\bar{x})+ \sum_{j \in \J} \bar{\beta}_{j} \partial h_{j} (\bar{x}) \subseteq \partial \left( f +  \sum_{ i \in \I} \bar{\lambda}_{i}  g_{i} + \sum_{j \in \J} \bar{\beta}_{j} h_{j} \right) (\bar{x}),
	\end{align*}
	which, by \cite[Theorem~16.3]{BC2017}, implies that $\bar{x}$ solves \cref{eq:Problem:fgh}. 
	
	In addition, because it is clear that $\left( \mathbb{R}^{s}_{-} \times \{0\}^{t} \right) ^{\ominus} =\mathbb{R}^{s}_{+} \times \mathbb{R}^{t}$, using  \cref{eq:them:KKT:EQINEQ:Hilbert:EQ:conditions} and \cite[Remark~19.26]{BC2017} and applying \cite[19.25(v)]{BC2017} with $\mathcal{K}=\mathbb{R}^{s+t}$, $K=\mathbb{R}^{s}_{-} \times \{0\}^{t}$, and $R: x  \mapsto (g_{i}(x))_{i \in \I} \times (h_{j}(x))_{j \in \J}$, we obtain that $ (\bar{\lambda}_{i})_{i \in \I}  \times (\bar{\beta}_{j})_{j \in \J}$ are Lagrange multipliers associated with $\bar{x}$.
	
	\textbf{Step~5:}  Suppose that  $f$ and $(g_{i})_{i \in \I}$ are  G\^ateaux differentiable at $\bar{x}$.	Then, using the assumptions that  $f$ and $(g_{i})_{i \in \I}$ are functions in $\Gamma_{0} (\mathcal{H})$, and  \cite[Proposition~17.31(i)]{BC2017}, we obtain that 
	\begin{align}\label{eq:partial:gi:nabla}
\partial f (\bar{x}) =\{ \nabla f (\bar{x}) \} 	\text{ and } (\forall i \in \I) ~\partial g_{i}(\bar{x}) =\{ \nabla g_{i} (\bar{x}) \}.
	\end{align}
	
	Therefore, \cref{eq:them:KKT:EQINEQ:Hilbert:EQ:conditions} is equivalent to \cref{eq:them:KKT:EQINEQ:Hilbert} under the G\^ateaux differentiable assumptions.
		
	Altogether, the proof is complete.
\end{proof}

\begin{remark}  \label{rem:KKT}
	\begin{enumerate}
		\item \label{rem:KKT:fulldomain}	For special cases satisfying the condition \cref{eq:them:KKT:EQINEQ:Hilbert:gi:hi:sri} in \cref{them:KKT:EQINEQ:Hilbert}, we refer the interested readers to \cite[Proposition~6.19]{BC2017}. In particular, if $\dom f =\mathcal{H}$, then applying \cite[Proposition~6.19(vii)]{BC2017} with $L=\Id$, $D= \left( \cap_{i \in \I} \lev_{\leq 0} g_{i}\right) \cap  \left( \cap_{j \in \J} \ker h_{j} \right)  $, and $C=\dom f=\mathcal{H}$, we know that \cref{eq:them:KKT:EQINEQ:Hilbert:noempty} implies \cref{eq:them:KKT:EQINEQ:Hilbert:gi:hi:sri}. Hence, if $\dom f =\mathcal{H}$, \cref{eq:them:KKT:EQINEQ:Hilbert:gi:hi:sri} is unnecessary. 
		\item If  $J=\varnothing$ in \cref{them:KKT:EQINEQ:Hilbert}, then as it is shown in the proof of \cite[Proposition~27.21]{BC2017}, in view of  \cite[Propositions~17.50 and 6.19(vii)]{BC2017}, the conditions (27.50) in \cite[Proposition~27.21]{BC2017} imply our conditions  \cref{eq:them:KKT:EQINEQ:Hilbert:gi:hi}. Hence,  we know that if $J=\varnothing$,  then \cref{them:KKT:EQINEQ:Hilbert} reduces to  \cite[Proposition~27.21]{BC2017}.
	\end{enumerate}
\end{remark}

\subsection*{Explicit formula of projection onto intersection of hyperplane and halfspace}
Recall that $u_{1}$ and $u_{2}$ are in $\mathcal{H}$ and $\eta_{1}$ and $\eta_{2}$ are in $\mathbb{R}$, and that $H_{1} := \{x \in \mathcal{H} ~:~ \innp{x,u_{1}}= \eta_{1} \}$, and  $W_{2} := \{x \in \mathcal{H} ~:~ \innp{x,u_{2}} \leq \eta_{2} \}$, $H_{2} := \{x \in \mathcal{H} ~:~ \innp{x,u_{2}} = \eta_{2} \}$.

In this subsection, we  shall provide an explicit formula for $\Pro_{H_{1} \cap W_{2}}$.

We can also prove the following result by using \cite[Theorem~3.1]{DDHT2020}.
\begin{theorem} \label{theor:u1u2LD:PBAM}
	Suppose that $u_{1}$ and $u_{2}$ are linearly dependent and  that $H_{1} \cap W_{2} \neq \varnothing$. Then   the following hold:
	\begin{enumerate}
		\item \label{item:theor:u1u2LD:PBAM:u1=0} Assume that $u_{1} =0$. Then $H_{1} =\mathcal{H}$, $ H_{1} \cap W_{2} =W_{2}$ and $\Pro_{ H_{1} \cap W_{2}  } =\Pro_{ W_{2}}$.
		\item \label{item:theor:u1u2LD:PBAM:u1neq0} Assume that $u_{1} \neq 0$. Then $H_{1} \cap W_{2} =H_{1}$ and $\Pro_{ H_{1} \cap W_{2}  } =\Pro_{ H_{1}}$.
	\end{enumerate}
\end{theorem}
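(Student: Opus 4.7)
The plan is to split by whether $u_{1}=0$, and in each case convert the hypothesis $H_{1}\cap W_{2}\neq\varnothing$ into a containment that collapses the intersection.

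For \cref{item:theor:u1u2LD:PBAM:u1=0}, assume $u_{1}=0$. Then every $x\in\mathcal{H}$ satisfies $\innp{x,u_{1}}=0$, so $H_{1}$ is either $\mathcal{H}$ (when $\eta_{1}=0$) or $\varnothing$ (when $\eta_{1}\neq 0$). Since $H_{1}\cap W_{2}\neq\varnothing$ forces $H_{1}\neq\varnothing$, we must have $\eta_{1}=0$ and hence $H_{1}=\mathcal{H}$. The remaining assertions $H_{1}\cap W_{2}=W_{2}$ and $\Pro_{H_{1}\cap W_{2}}=\Pro_{W_{2}}$ are then immediate.

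For \cref{item:theor:u1u2LD:PBAM:u1neq0}, assume $u_{1}\neq 0$. By \cref{lem:basic:u1u2:LinerDep}\cref{item:lem:basic:u1u2:LinerDep:LD:u1neq0}, the linear dependence of $u_{1},u_{2}$ yields
\begin{equation*}
u_{2}=\frac{\innp{u_{1},u_{2}}}{\norm{u_{1}}^{2}}\,u_{1}.
\end{equation*}
Therefore, for every $x\in H_{1}$,
\begin{equation*}
\innp{x,u_{2}}=\frac{\innp{u_{1},u_{2}}}{\norm{u_{1}}^{2}}\,\innp{x,u_{1}}=\frac{\innp{u_{1},u_{2}}\,\eta_{1}}{\norm{u_{1}}^{2}},
\end{equation*}
so $\innp{\cdot,u_{2}}$ is constant on $H_{1}$. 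Pick any $y\in H_{1}\cap W_{2}$, which exists by hypothesis; then this constant value equals $\innp{y,u_{2}}\leq\eta_{2}$. Consequently $\innp{x,u_{2}}\leq\eta_{2}$ for every $x\in H_{1}$, i.e., $H_{1}\subseteq W_{2}$. This gives $H_{1}\cap W_{2}=H_{1}$, and $\Pro_{H_{1}\cap W_{2}}=\Pro_{H_{1}}$ follows at once.

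There is no real obstacle here: the whole argument is a direct application of \cref{lem:basic:u1u2:LinerDep}\cref{item:lem:basic:u1u2:LinerDep:LD:u1neq0} together with the elementary observation that $\innp{\cdot,u_{2}}$ is constant on the hyperplane $H_{1}$ whenever $u_{2}$ is a scalar multiple of $u_{1}$. The only subtlety worth noting is that without the nonemptiness hypothesis one could have $H_{1}\cap W_{2}=\varnothing$ (precisely when $\innp{u_{1},u_{2}}\,\eta_{1}>\eta_{2}\norm{u_{1}}^{2}$), which is exactly what the assumption $H_{1}\cap W_{2}\neq\varnothing$ rules out.
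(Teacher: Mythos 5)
Your proof is correct and follows essentially the same route as the paper: part (i) by forcing $\eta_{1}=0$ from nonemptiness, and part (ii) by invoking \cref{lem:basic:u1u2:LinerDep}\cref{item:lem:basic:u1u2:LinerDep:LD:u1neq0} to write $u_{2}$ as a scalar multiple of $u_{1}$ and then using a witness point of $H_{1}\cap W_{2}$ to conclude $H_{1}\subseteq W_{2}$. The only cosmetic difference is that you use the coefficient $\innp{u_{1},u_{2}}/\norm{u_{1}}^{2}$ where the paper uses $\pm\norm{u_{2}}/\norm{u_{1}}$, which are equivalent forms from the same lemma.
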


\begin{proof}
	\cref{item:theor:u1u2LD:PBAM:u1=0}: Clearly, $u_{1} =0$ and  $H_{1} \cap W_{2} \neq \varnothing$ imply that $H_{1}  \neq \varnothing$,  $\eta_{1} =0$ and $H_{1}=\mathcal{H}$. Hence, \cref{item:theor:u1u2LD:PBAM:u1=0} holds.

	\cref{item:theor:u1u2LD:PBAM:u1neq0}:  Using \cref{lem:basic:u1u2:LinerDep}\cref{item:lem:basic:u1u2:LinerDep:LD:u1neq0}, $u_{1} \neq 0$, and the linear dependence of  $u_{1}$ and $u_{2}$, we have that 
	\begin{align} \label{eq:theor:u1u2LD:BAM:case2:u1u2}
	\text{either } u_{2} = \frac{\norm{u_{2}}}{\norm{u_{1}}} u_{1} \quad  \text{ or } \quad u_{2} = -\frac{\norm{u_{2}}}{\norm{u_{1}}} u_{1}.
	\end{align}  
	
Recall that $H_{1} \cap W_{2} \neq \varnothing$. Take $\bar{z} \in H_{1} \cap W_{2}$, that is, 
		\begin{align}\label{eq:theor:u1u2LD:BAM:case2}
		\innp{\bar{z}, u_{1}} =\eta_{1},  \quad \text{and} \quad 
		\innp{\bar{z}, u_{2}} \leq \eta_{2}. 
		\end{align}
Then $(\forall x \in H_{1})$  $\innp{x- \bar{z} ,u_{1}}=\eta_{1}-\eta_{1}=0$. Moreover, for every $x \in H_{1}$,
	\begin{align*}
	\innp{x,u_{2}} -\eta_{2} = \innp{x- \bar{z} ,u_{2}}  +\innp{\bar{z}, u_{2}}  -\eta_{2}  \stackrel{\cref{eq:theor:u1u2LD:BAM:case2:u1u2} }{=} \pm \frac{\norm{u_{2}}}{\norm{u_{1}}} \innp{x- \bar{z} ,u_{1}} +\innp{\bar{z}, u_{2}}  -\eta_{2} 
	 =  \innp{\bar{z}, u_{2}}  -\eta_{2} 
	  \stackrel{\cref{eq:theor:u1u2LD:BAM:case2}}{\leq} 0, 
	\end{align*}
	which implies that $x \in W_{2} $. So, 
	$
	H_{1} \subseteq W_{2}.
	$
	Hence, $	H_{1} \cap W_{2} =H_{1} $ and $\Pro_{H_{1} \cap W_{2} }=\Pro_{H_{1}}$.
\end{proof}
The following easy result is  necessary to prove the  \cref{them:Formula:H1capW2} below.
\begin{lemma} \label{lem:H1interW2:nonempty}
	Suppose that $u_{1}$ and $u_{2}$ are linearly independent.
	Then $H_{1} \cap H_{2} \neq \varnothing$ and $H_{1} \cap \inte W_{2} \neq \varnothing$.
\end{lemma}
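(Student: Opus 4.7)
The plan is to handle the two non-emptiness claims in sequence, relying on machinery already established in the paper and the elementary observation that linear independence of $u_{1}$ and $u_{2}$ forces $u_{1}\neq 0$ and $u_{2}\neq 0$ (otherwise the pair would trivially be dependent, with a zero coefficient).

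For the first claim $H_{1}\cap H_{2}\neq\varnothing$, the proof is essentially a one-line application of \cref{lem:capMi}. I would invoke it with $m=2$, $a_{1}:=u_{1}$, $a_{2}:=u_{2}$, $\xi_{1}:=\eta_{1}$, $\xi_{2}:=\eta_{2}$; the hypothesis of \cref{lem:capMi} is exactly the linear independence of $u_{1},u_{2}$, and its conclusion asserts that $H_{1}\cap H_{2}\cap\spn\{u_{1},u_{2}\}$ is a singleton, so in particular $H_{1}\cap H_{2}\neq\varnothing$.

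For the second claim $H_{1}\cap\inte W_{2}\neq\varnothing$, the strategy is to take any $z\in H_{1}\cap H_{2}$ (which is available from the first claim) and perturb it along a direction $v$ that lies in $\ker u_{1}$ (so we stay in $H_{1}$) while strictly decreasing the value of $\innp{\cdot,u_{2}}$ (so we enter $\inte W_{2}$). Such a $v$ exists because linear independence forces $u_{2}\notin\spn\{u_{1}\}$, and hence $\ker u_{1}\not\subseteq\ker u_{2}$ (otherwise, taking orthogonal complements, $u_{2}$ would lie in $\mathbb{R}u_{1}$). Pick $v\in\ker u_{1}$ with $\innp{v,u_{2}}\neq 0$; after replacing $v$ by $-v$ if necessary, assume $\innp{v,u_{2}}<0$. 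Then for every $\alpha>0$ one checks $\innp{z+\alpha v,u_{1}}=\eta_{1}$ and $\innp{z+\alpha v,u_{2}}=\eta_{2}+\alpha\innp{v,u_{2}}<\eta_{2}$, so $z+\alpha v\in H_{1}\cap\inte W_{2}$.

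There is no real obstacle here; the argument is almost immediate once \cref{lem:capMi} is invoked. The only small point to verify explicitly is the identification $\inte W_{2}=\{x\in\mathcal{H}:\innp{x,u_{2}}<\eta_{2}\}$, which is a standard fact for halfspaces cut out by a nonzero continuous linear functional and is implicit in \cref{eq:W12}.
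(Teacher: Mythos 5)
Your proposal is correct and follows essentially the same route as the paper: the first claim via \cref{lem:capMi} with $m=2$, and the second by perturbing a point of $H_{1}\cap H_{2}$ along a vector in $\ker u_{1}\smallsetminus\ker u_{2}$ (the paper uses the specific perturbation $\bar{z}-\innp{\bar{y},u_{2}}\bar{y}$, which builds the sign choice into the coefficient rather than normalizing $v$ as you do). The only difference is cosmetic.
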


\begin{proof}
$H_{1} \cap H_{2} \neq \varnothing$ follows from \cref{lem:capMi}. 
	Because $(\forall i \in \{1,2\})$ $(\spn \{u_{i}\} )^{\perp} = \ker u_{i}$,  and  $u_{1}$ and $u_{2}$ are linearly independent, we know that $  \spn \{u_{2}\}  \not\subseteq  \spn \{u_{1}\}$, and so $ \ker u_{1}   =  (\spn \{u_{1}\} )^{\perp}  \not \subseteq  (\spn \{u_{2}\} )^{\perp}  =\ker u_{2} $.  Take $\bar{z} \in H_{1} \cap H_{2}$, and  $ \bar{y} \in \ker u_{1} \smallsetminus \ker u_{2}$.  Now,
	\begin{align*}
	&\Innp{ \bar{z} - \innp{\bar{y}, u_{2}} \bar{y}, u_{1} } = \innp{\bar{z}, u_{1}} -\innp{\bar{y}, u_{2}} \innp{\bar{y}, u_{1}} =\eta_{1},  \\
	& \Innp{ \bar{z} - \innp{\bar{y}, u_{2}} \bar{y}, u_{2} } = \innp{\bar{z}, u_{2}} -\innp{\bar{y}, u_{2}}^{2}  = \eta_{1}  -\innp{\bar{y}, u_{2}}^{2}  < \eta_{2},
	\end{align*}
	which  imply  that $\bar{z} - \innp{\bar{y}, u_{2}} \bar{y} \in H_{1} \cap \inte W_{2} \neq \varnothing $.
\end{proof}

The main idea of   the following result is from \cite[Proposition~29.23]{BC2017}, but this time we use the KKT conditions proved in \cref{them:KKT:EQINEQ:Hilbert}. 
\begin{theorem} \label{them:Formula:H1capW2}
	Suppose that  $u_{1}$ and $u_{2}$ are linearly independent.
	Let $x \in \mathcal{H}$. 
	Then $H_{1} \cap W_{2} \neq \varnothing$ and 
	\begin{align*}
	\Pro_{H_{1} \cap W_{2}} x =x - \xi_{1} u_{1} -\xi_{2} u_{2},
	\end{align*}
	where exactly one of the following holds:
	\begin{enumerate}
		\item \label{item:them:Formula:H1capW2:>0} $(\innp{x,u_{2}} - \eta_{2}) \norm{u_{1}}^{2} - (\innp{x, u_{1}} -\eta_{1}) \innp{u_{1}, u_{2}} >0$.  Then $\xi_{1} = \frac{(\innp{x,u_{1}} - \eta_{1}) \norm{u_{2}}^{2} - (\innp{x, u_{2}} -\eta_{2}) \innp{u_{1}, u_{2}}}{\norm{u_{1}}^{2} \norm{u_{2}}^{2} - |\innp{u_{1}, u_{2}}|^{2} }$, and $\xi_{2} = \frac{(\innp{x,u_{2}} - \eta_{2}) \norm{u_{1}}^{2} - (\innp{x, u_{1}} -\eta_{1}) \innp{u_{1}, u_{2}} }{\norm{u_{1}}^{2} \norm{u_{2}}^{2} - |\innp{u_{1}, u_{2}}|^{2} }>0.$
		Moreover, $\Pro_{H_{1} \cap W_{2}}x = \Pro_{H_{1} \cap H_{2}}x	$.
		\item  \label{item:them:Formula:H1capW2:leq0}  $(\innp{x,u_{2}} - \eta_{2}) \norm{u_{1}}^{2} - (\innp{x, u_{1}} -\eta_{1}) \innp{u_{1}, u_{2}}  \leq 0$. Then  $	\xi_{1} =\frac{ \innp{x, u_{1}} -\eta_{1} }{\norm{u_{1}}^{2}}$ and $  \xi_{2} = 0.$
		Moreover, $\Pro_{H_{1} \cap W_{2}}x = \Pro_{H_{1}}x	$.
	\end{enumerate}
\end{theorem}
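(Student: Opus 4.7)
The plan is to formulate the projection as a constrained convex optimization problem and apply the KKT conditions from \cref{them:KKT:EQINEQ:Hilbert}. Specifically, $\Pro_{H_{1} \cap W_{2}}x$ is the unique solution to
\begin{equation*}
\minimize_{y \in \mathcal{H}} ~\tfrac{1}{2} \norm{y - x}^{2} \quad \subject \quad h(y) := \innp{y, u_{1}} - \eta_{1} = 0, ~ g(y) := \innp{y, u_{2}} - \eta_{2} \leq 0.
\end{equation*}
First, I would verify nonemptiness: by \cref{lem:H1interW2:nonempty}, $H_{1} \cap \inte W_{2} \neq \varnothing$, which both confirms $H_{1} \cap W_{2} \neq \varnothing$ and gives the Slater-type condition \cref{eq:them:KKT:EQINEQ:Hilbert:noempty}. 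The domain hypothesis \cref{eq:them:KKT:EQINEQ:Hilbert:gi} is trivial since $\dom g = \mathcal{H}$, and by \cref{rem:KKT}\cref{rem:KKT:fulldomain}, condition \cref{eq:them:KKT:EQINEQ:Hilbert:gi:hi:sri} holds automatically because $\dom(\tfrac{1}{2}\norm{\cdot - x}^{2}) = \mathcal{H}$.

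Now I would apply \cref{them:KKT:EQINEQ:Hilbert}. Since $f := \tfrac{1}{2}\norm{\cdot - x}^{2}$ and $g$ are G\^ateaux differentiable with $\nabla f(\bar{y}) = \bar{y} - x$, $\nabla g(\bar{y}) = u_{2}$, and $\nabla h(\bar{y}) = u_{1}$, the conditions \cref{eq:them:KKT:EQINEQ:Hilbert} become: $\bar{y} = \Pro_{H_{1} \cap W_{2}} x$ if and only if there exist $\xi_{1} \in \mathbb{R}$ and $\xi_{2} \geq 0$ with
\begin{equation*}
\bar{y} = x - \xi_{1} u_{1} - \xi_{2} u_{2}, \quad \innp{\bar{y}, u_{1}} = \eta_{1}, \quad \innp{\bar{y}, u_{2}} \leq \eta_{2}, \quad \xi_{2}(\innp{\bar{y}, u_{2}} - \eta_{2}) = 0.
\end{equation*}
The complementary slackness splits into two cases based on whether $\xi_{2} = 0$ or $\xi_{2} > 0$.

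If $\xi_{2} = 0$ then the equality constraint alone determines $\xi_{1} = \frac{\innp{x, u_{1}} - \eta_{1}}{\norm{u_{1}}^{2}}$, so $\bar{y} = \Pro_{H_{1}}x$. This candidate is admissible precisely when $\Pro_{H_{1}}x \in W_{2}$, which by \cref{cor:PH2PH1}\cref{cor:PH2PH1:notin:EQ} is equivalent to the inequality $(\innp{x,u_{2}} - \eta_{2})\norm{u_{1}}^{2} - (\innp{x,u_{1}} - \eta_{1})\innp{u_{1},u_{2}} \leq 0$, giving case \cref{item:them:Formula:H1capW2:leq0}. If $\xi_{2} > 0$ then $\innp{\bar{y}, u_{2}} = \eta_{2}$, so $\bar{y} \in H_{1} \cap H_{2}$ and the pair $(\xi_{1}, \xi_{2})$ satisfies the $2 \times 2$ linear system
\begin{equation*}
\begin{pmatrix} \norm{u_{1}}^{2} & \innp{u_{1}, u_{2}} \\ \innp{u_{1}, u_{2}} & \norm{u_{2}}^{2} \end{pmatrix} \begin{pmatrix} \xi_{1} \\ \xi_{2} \end{pmatrix} = \begin{pmatrix} \innp{x, u_{1}} - \eta_{1} \\ \innp{x, u_{2}} - \eta_{2} \end{pmatrix}.
\end{equation*}
Linear independence of $u_{1}, u_{2}$ together with \cref{lem:basic:u1u2:LinerDep}\cref{item:lem:basic:u1u2:LinerDep:LID} forces the determinant $\norm{u_{1}}^{2}\norm{u_{2}}^{2} - |\innp{u_{1},u_{2}}|^{2}$ to be strictly positive, so Cramer's rule yields the formulas in case \cref{item:them:Formula:H1capW2:>0}. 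The requirement $\xi_{2} > 0$ then translates to the stated inequality. To obtain the identification $\Pro_{H_{1} \cap W_{2}}x = \Pro_{H_{1} \cap H_{2}}x$ in case \cref{item:them:Formula:H1capW2:>0}, I would apply \cref{fact:AsubseteqB:Projection} with $A = H_{1} \cap H_{2} \subseteq B = H_{1} \cap W_{2}$ since $\bar{y} \in H_{1} \cap H_{2}$.

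The main obstacle is the bookkeeping around the case split: one must confirm that the two conditions (inequality active vs. inactive) partition $\mathcal{H}$ cleanly and that uniqueness of projections forces the correct branch to be selected by the sign test $(\innp{x,u_{2}} - \eta_{2})\norm{u_{1}}^{2} - (\innp{x,u_{1}} - \eta_{1})\innp{u_{1},u_{2}}$. Everything else is a direct application of KKT plus solving a well-conditioned $2 \times 2$ system.
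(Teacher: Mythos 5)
Your proposal is correct and follows essentially the same route as the paper: cast $\Pro_{H_{1}\cap W_{2}}x$ as the minimizer of $\tfrac{1}{2}\norm{y-x}^{2}$ subject to $h(y)=0$, $g(y)\leq 0$, verify the hypotheses of \cref{them:KKT:EQINEQ:Hilbert} via \cref{lem:H1interW2:nonempty} and \cref{rem:KKT}\cref{rem:KKT:fulldomain}, and split on complementary slackness, solving the Gram system by Cramer's rule in the active case and invoking \cref{cor:PH2PH1}\cref{cor:PH2PH1:notin:EQ} and \cref{fact:AsubseteqB:Projection} exactly as the paper does. The only (harmless) deviation is that you read off $\bar{y}\in H_{2}$ directly from $\xi_{2}>0$ and complementary slackness, whereas the paper re-verifies $\innp{\Pro_{H_{1}\cap W_{2}}x,u_{2}}=\eta_{2}$ by substituting the explicit formulas; your shortcut is valid and the two cases do partition $\mathcal{H}$ since the sign conditions are complementary.
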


\begin{proof}
	By the definition of projection,  $\Pro_{H_{1} \cap W_{2}}x$ is the unique solution of the problem 
	\begin{align*}
	\minimize ~ & f(y) = \frac{1}{2} \norm{y -x}^{2}\\
	\subject ~ & g (y) =\innp{y, u_{2}} -\eta_{2} \leq 0\\
	& h(y) = \innp{y, u_{1}} - \eta_{1} =0.
	\end{align*}
	Now $\dom f=\mathcal{H}$, $\dom g =\mathcal{H}$, $f$ and $g$ are differentiable functions in $\Gamma_{0}(\mathcal{H})$, and 
	$ \lev_{\leq 0} g =W_{2} \subseteq \mathcal{H} = \inte \dom g $. By \cref{lem:H1interW2:nonempty},  $\lev_{<0} g \cap \ker h = \inte W_{2} \cap H_{1} \neq \varnothing$. 
	Moreover, because $\dom f =\mathcal{H}$, by \cref{rem:KKT}\cref{rem:KKT:fulldomain}, $ 0 \in \sri \left( (\lev_{\leq 0}g \cap \ker h)  -\dom f\right)$.
	Note that  $(\forall y \in \mathcal{H})$ $\nabla f(y) =y-x$, and $\nabla g (y)= u_{2}$.
	Hence, apply \cref{them:KKT:EQINEQ:Hilbert} with $s=1$, $t=1$, $f(y)=\frac{1}{2} \norm{y -x}^{2} $, $g_{1}=g$ and $h_{1}=h $ to obtain that there exist $\xi_{2} \in \mathbb{R}_{+}$ and $\xi_{1} \in \mathbb{R}$ such that
	\begin{subequations} \label{them:Formula:H1capW2:formu}
		\begin{align}
		&\Pro_{H_{1} \cap W_{2}}x -x + \xi_{2} u_{2} +\xi_{1} u_{1} =0, \label{them:Formula:H1capW2:formu:deriva}\\
		& \innp{u_{2} , \Pro_{H_{1} \cap W_{2}}x } - \eta_{2} \leq 0, \quad \xi_{2} (\innp{u_{2} , \Pro_{H_{1} \cap W_{2}}x } - \eta_{2}  ) =0, \label{them:Formula:H1capW2:formu:g}\\
		& \innp{u_{1} , \Pro_{H_{1} \cap W_{2}}x } - \eta_{1} = 0. \label{them:Formula:H1capW2:formu:h}
		\end{align}
	\end{subequations}
Hence,  we have that
	\begin{subequations}
		\begin{align}
		&	\Pro_{H_{1} \cap W_{2}}x = x -\xi_{1} u_{1}  - \xi_{2} u_{2}, \quad (\text{by \cref{them:Formula:H1capW2:formu:deriva}})  \label{eq:them:Formula:H1capW2:P}\\
		& \innp{u_{2} ,  x -\xi_{1} u_{1}  - \xi_{2} u_{2}  } - \eta_{2} \leq 0, \quad (\text{by \cref{eq:them:Formula:H1capW2:P} and \cref{them:Formula:H1capW2:formu:g}})  \label{eq:them:Formula:H1capW2:formu:g}\\
		&  \innp{u_{1} , x -\xi_{1} u_{1}  - \xi_{2} u_{2}  } - \eta_{1} = 0,  \quad (\text{by \cref{eq:them:Formula:H1capW2:P} and \cref{them:Formula:H1capW2:formu:h}}) \label{eq:them:Formula:H1capW2:formu:h}\\
		&  \xi_{2} (\innp{u_{2} ,x -\xi_{1} u_{1}  - \xi_{2} u_{2} } - \eta_{2}  ) =0.  \quad (\text{by \cref{eq:them:Formula:H1capW2:P} and \cref{them:Formula:H1capW2:formu:g}})  \label{eq:them:Formula:H1capW2:formu:xig}
		\end{align}
	\end{subequations}
Now, we have exactly the following two cases. 
	
	\textbf{Case~1:} $\xi_{2} >0$. Then   \cref{eq:them:Formula:H1capW2:formu:xig} implies 
	\begin{align} \label{eq:them:Formula:H1capW2:case1:W2}
	\innp{u_{2} ,x -\xi_{1} u_{1}  - \xi_{2} u_{2} } - \eta_{2}   =0.
	\end{align}
	Combine \cref{eq:them:Formula:H1capW2:case1:W2} with \cref{eq:them:Formula:H1capW2:formu:h} to obtain that
	\begin{align} \label{eq:u1u2Linearsystem}
	\begin{pmatrix}
	\innp{u_{1}, u_{1}} &  \innp{u_{1}, u_{2}} \\
	\innp{u_{2}, u_{1}} & \innp{u_{2}, u_{2}} 
	\end{pmatrix}
	\begin{pmatrix}
	\xi_{1}\\
	\xi_{2}
	\end{pmatrix}
	=  \begin{pmatrix}
	\innp{u_{1}, x} -	\eta_{1}\\
	\innp{u_{2}, x} -	\eta_{2}
	\end{pmatrix}.
	\end{align} 
\cref{fact:Gram:inver} and the linear independence of  $u_{1}$ and $u_{2}$  imply that the Gram matrix $G(u_{1}, u_{2})$ defined as \cref{eq:GramMatrix} is invertible. Solve the system \cref{eq:u1u2Linearsystem} of linear equations to obtain that 
	\begin{align} \label{eq:them:Formula:H1capW2:case1:xi}
	\xi_{1} = \frac{(\innp{x,u_{1}} - \eta_{1}) \norm{u_{2}}^{2} - (\innp{x, u_{2}} -\eta_{2}) \innp{u_{1}, u_{2}}}{\norm{u_{1}}^{2} \norm{u_{2}}^{2} - |\innp{u_{1}, u_{2}}|^{2} } \text{ and }\xi_{2} = \frac{(\innp{x,u_{2}} - \eta_{2}) \norm{u_{1}}^{2} - (\innp{x, u_{1}} -\eta_{1}) \innp{u_{1}, u_{2}} }{\norm{u_{1}}^{2} \norm{u_{2}}^{2} - |\innp{u_{1}, u_{2}}|^{2} }.
	\end{align}
	Use  $u_{1}$ and $u_{2}$ are linearly independent again and apply \cref{lem:basic:u1u2:LinerDep}\cref{item:lem:basic:u1u2:LinerDep:LID} to know that
	\begin{align*}
	\xi_{2} >0 \Leftrightarrow (\innp{x,u_{2}} - \eta_{2}) \norm{u_{1}}^{2} - (\innp{x, u_{1}} -\eta_{1}) \innp{u_{1}, u_{2}} >0.
	\end{align*}
	Hence, \cref{item:them:Formula:H1capW2:>0}  is exactly the Case 1. Combine   \cref{eq:them:Formula:H1capW2:P} with \cref{eq:them:Formula:H1capW2:case1:xi} to know that  
the first part of	\cref{item:them:Formula:H1capW2:>0}  is true. 

In addition, 
\begin{align*}
\innp{\Pro_{H_{1} \cap W_{2}}x, u_{2}} -\eta_{2} 
\stackrel{\cref{eq:them:Formula:H1capW2:P}}{=} &\innp{x - \xi_{1} u_{1} -\xi_{2} u_{2}, u_{2}} -\eta_{2}\\
~=~ & \innp{x, u_{2}} -\eta_{2} - \xi_{1} \innp{u_{1}, u_{2}} -\xi_{2} \norm{u_{2}}^{2}\\
\stackrel{\cref{eq:them:Formula:H1capW2:case1:xi}}{=} & ( \innp{x, u_{2}} -\eta_{2} ) - \frac{(\innp{x,u_{1}} - \eta_{1}) \norm{u_{2}}^{2} - (\innp{x, u_{2}} -\eta_{2}) \innp{u_{1}, u_{2}}}{\norm{u_{1}}^{2} \norm{u_{2}}^{2} - |\innp{u_{1}, u_{2}}|^{2} }\innp{u_{1}, u_{2}} \\
\quad \quad &
-  \frac{(\innp{x,u_{2}} - \eta_{2}) \norm{u_{1}}^{2} - (\innp{x, u_{1}} -\eta_{1}) \innp{u_{1}, u_{2}} }{\norm{u_{1}}^{2} \norm{u_{2}}^{2} - |\innp{u_{1}, u_{2}}|^{2} } \norm{u_{2}}^{2}\\
~=~ & ( \innp{x, u_{2}} -\eta_{2} ) -  \frac{(\innp{x,u_{1}} - \eta_{1}) (\norm{u_{2}}^{2} \innp{u_{1} ,u_{2}} -\innp{u_{1},u_{2}} \norm{u_{2}}^{2} )}{\norm{u_{1}}^{2} \norm{u_{2}}^{2} - |\innp{u_{1}, u_{2}}|^{2} } \\
 \quad \quad & -  \frac{(\innp{x,u_{2}} - \eta_{2}) (\norm{u_{1}}^{2} \norm{u_{2}}^{2} -\innp{u_{1}, u_{2}}^{2}) }{\norm{u_{1}}^{2} \norm{u_{2}}^{2} - |\innp{u_{1}, u_{2}}|^{2} } \\
~=~& (\innp{x,u_{2}} - \eta_{2}) - (\innp{x,u_{2}} - \eta_{2}) = 0,
\end{align*}
which implies that $\Pro_{H_{1} \cap W_{2}}x \in H_{2} $.  So, $ \Pro_{H_{1} \cap W_{2}}x \in H_{1} \cap H_{2}$. Hence, apply \cref{fact:AsubseteqB:Projection}  with $A=H_{1} \cap H_{2} $ and $B= H_{1} \cap W_{2}$ to obtain $ \Pro_{H_{1} \cap W_{2}}x = \Pro_{H_{1} \cap H_{2}}x $.
	
	\textbf{Case~2:}  $\xi_{2} =0$. Then by \cref{eq:them:Formula:H1capW2:formu:h}, we know that 
	\begin{align} \label{eq:them:Formula:H1capW2:case2:xi1}
	\xi_{1} = \frac{\innp{u_{1}, x} - \eta_{1}}{\norm{u_{1}}^{2}}.
	\end{align}
	Taking  \cref{eq:them:Formula:H1capW2:P} and \cref{fact:Projec:Hyperplane} into account, we know that
	\begin{align}   \label{eq:them:Formula:H1capW2:case2:eq:xi2=0}
	\xi_{2} =0 \Leftrightarrow \Pro_{H_{1} \cap W_{2}}x = x -\xi_{1} u_{1} =x + \frac{ \eta_{1} -\innp{u_{1}, x} }{\norm{u_{1}}^{2}} u_{1} =\Pro_{H_{1}} x.
	\end{align}
	Apply \cref{fact:AsubseteqB:Projection}  with $A= H_{1} \cap W_{2}$ and $B= H_{1}$ and use \cref{cor:PH2PH1}\cref{cor:PH2PH1:notin:EQ} to obtain that
		\begin{align}  \label{eq:them:Formula:H1capW2:case2:eq:PH1capW2}
		\Pro_{H_{1} \cap W_{2}}x =  \Pro_{H_{1}} x \Leftrightarrow \Pro_{H_{1}} x \in W_{2}  
		  \Leftrightarrow (\innp{x,u_{2}} - \eta_{2}) \norm{u_{1}}^{2} - (\innp{x, u_{1}} -\eta_{1}) \innp{u_{1}, u_{2}}  \leq 0.
		\end{align}
	Combine \cref{eq:them:Formula:H1capW2:case2:eq:xi2=0} with \cref{eq:them:Formula:H1capW2:case2:eq:PH1capW2} to obtain that
	\begin{align}  \label{eq:them:Formula:H1capW2:case2:equiva}
	\xi_{2} =0 \Leftrightarrow (\innp{x,u_{2}} - \eta_{2}) \norm{u_{1}}^{2} - (\innp{x, u_{1}} -\eta_{1}) \innp{u_{1}, u_{2}}  \leq 0.
	\end{align}
	Hence,  \cref{item:them:Formula:H1capW2:leq0}   is exactly the Case 2. 
	Moreover, \cref{eq:them:Formula:H1capW2:case2:xi1} and \cref{eq:them:Formula:H1capW2:case2:eq:xi2=0} deduce \cref{item:them:Formula:H1capW2:leq0}.	
	Altogether, the proof is complete.
\end{proof}

\begin{lemma} \label{lemma:PH1capW2:PH1H2}
	Suppose that $u_{1}$ and $u_{2}$ are linearly independent. Let $x \in \mathcal{H}$. 	 Then the following hold:
	\begin{enumerate}
		\item  \label{lemma:PH1capW2:PH1H2:P} If $ \Pro_{H_{1} }x  \notin W_{2}$, then $\Pro_{H_{1} \cap W_{2}}x = \Pro_{H_{1} \cap H_{2}}x$; otherwise, $\Pro_{H_{1} \cap W_{2}}x = \Pro_{H_{1} }x$.
		\item  \label{lemma:PH1capW2:PH1H2:notinW2} Assume that $ \Pro_{H_{1}} \Pro_{W_{2}}x \notin   W_{2}$. Then $ \Pro_{H_{1} \cap W_{2}}x=\Pro_{H_{1} \cap H_{2}}x$.
	\end{enumerate} 
\end{lemma}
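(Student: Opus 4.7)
\textbf{For (i).} The plan is to invoke \cref{them:Formula:H1capW2} and align its dichotomy with the hypothesis on $\Pro_{H_{1}}x$. By \cref{cor:PH2PH1}\cref{cor:PH2PH1:notin:EQ}, the condition $\Pro_{H_{1}}x \notin W_{2}$ is equivalent to
$\norm{u_{1}}^{2}(\innp{x,u_{2}} - \eta_{2}) > \innp{u_{1},u_{2}}(\innp{x,u_{1}} - \eta_{1})$, which is precisely the case \cref{item:them:Formula:H1capW2:>0} of \cref{them:Formula:H1capW2}; that case yields $\Pro_{H_{1} \cap W_{2}}x = \Pro_{H_{1} \cap H_{2}}x$. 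In the \enquote{otherwise} case, $\Pro_{H_{1}}x \in W_{2}$, so $\Pro_{H_{1}}x \in H_{1} \cap W_{2}$, and since $H_{1} \cap W_{2} \subseteq H_{1}$, \cref{fact:AsubseteqB:Projection} (applied with $A = H_{1} \cap W_{2}$ and $B = H_{1}$) gives $\Pro_{H_{1} \cap W_{2}}x = \Pro_{H_{1}}x$.

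\textbf{For (ii).} By part (i), it suffices to show that the hypothesis $\Pro_{H_{1}}\Pro_{W_{2}}x \notin W_{2}$ forces $\Pro_{H_{1}}x \notin W_{2}$. I would split on whether $x \in W_{2}$ or not. If $x \in W_{2}$, then $\Pro_{W_{2}}x = x$ and the hypothesis reads directly as $\Pro_{H_{1}}x \notin W_{2}$. If $x \in W_{2}^{c}$, then \cref{remark:FactWFactH} gives $\Pro_{W_{2}}x = \Pro_{H_{2}}x$, so the hypothesis becomes $\Pro_{H_{1}}\Pro_{H_{2}}x \notin W_{2}$.

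In this second subcase I would use the explicit formula of \cref{cor:PH2PH1}\cref{cor:PH2PH1:H2H1} (with $H_{1}$ and $H_{2}$ swapped) to compute
\begin{align*}
\innp{\Pro_{H_{1}}\Pro_{H_{2}}x, u_{2}} - \eta_{2}
= \frac{\innp{u_{1},u_{2}}\bigl(\innp{u_{1},u_{2}}(\innp{x,u_{2}}-\eta_{2}) - \norm{u_{2}}^{2}(\innp{x,u_{1}}-\eta_{1})\bigr)}{\norm{u_{1}}^{2}\norm{u_{2}}^{2}}.
\end{align*}
Writing $a := \innp{x,u_{1}}-\eta_{1}$, $b := \innp{x,u_{2}}-\eta_{2}$, $c := \innp{u_{1},u_{2}}$, $\alpha := \norm{u_{1}}^{2}$ and $\beta := \norm{u_{2}}^{2}$, the hypothesis becomes $c(bc - a\beta) > 0$, and the goal is $\alpha b > ca$, which (by \cref{cor:PH2PH1}\cref{cor:PH2PH1:notin:EQ} again) is exactly $\Pro_{H_{1}}x \notin W_{2}$. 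Since $x \in W_{2}^{c}$ we have $b > 0$; the linear independence of $u_{1}, u_{2}$ plus \cref{lem:basic:u1u2:LinerDep}\cref{item:lem:basic:u1u2:LinerDep:LID} gives the strict Cauchy–Schwarz bound $\alpha\beta > c^{2}$. Distinguishing the sign of $c$ in $c(bc - a\beta) > 0$ (the case $c=0$ is impossible), a short chain of inequalities multiplying by $c/\beta$ (which preserves or reverses the inequality depending on the sign of $c$) and then using $\alpha\beta > c^{2}$ together with $b>0$ yields $\alpha b > ca$ in both sign subcases.

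\textbf{Anticipated obstacle.} The whole argument is routine once the right substitutions are made; the only delicate point is the sign-splitting on $c$ in the subcase $x \in W_{2}^{c}$ of (ii). The two sign branches are symmetric under multiplication by $-c/\beta$, and the strict Cauchy–Schwarz inequality (which fails exactly in the linearly dependent case we have excluded) is precisely what closes the gap between $ca$ and $bc^{2}/\beta$. This is the step I would take most care with in a full write-up; the rest reduces to citing \cref{them:Formula:H1capW2}, \cref{fact:AsubseteqB:Projection}, \cref{remark:FactWFactH}, and \cref{cor:PH2PH1}.
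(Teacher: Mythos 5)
Your proposal is correct and follows essentially the same route as the paper: part (i) is the same direct appeal to \cref{them:Formula:H1capW2} and \cref{cor:PH2PH1}\cref{cor:PH2PH1:notin:EQ}, and for part (ii) the paper proves the contrapositive of your implication in the subcase $x \in W_{2}^{c}$ (assume $\Pro_{H_{1}}x \in W_{2}$, deduce $\Pro_{H_{1}}\Pro_{H_{2}}x \in W_{2}$) using exactly the same swapped formula from \cref{cor:PH2PH1}\cref{cor:PH2PH1:H2H1}, strict Cauchy--Schwarz via linear independence, and $\eta_{2}-\innp{x,u_{2}}<0$. The sign split on $c$ that you flag as the delicate point is in fact avoidable: expanding your hypothesis $c(cb-a\beta)>0$ gives $ca\beta < c^{2}b$ at once, whence $ca < c^{2}b/\beta < \alpha b$ using $b>0$ and $c^{2}<\alpha\beta$, which is how the paper's one-line inequality chain sidesteps the case distinction.
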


\begin{proof}
	\cref{lemma:PH1capW2:PH1H2:P}: This is clear from \cref{them:Formula:H1capW2} and \cref{cor:PH2PH1}\cref{cor:PH2PH1:notin:EQ}.
	
	\cref{lemma:PH1capW2:PH1H2:notinW2}:
	By \cref{lemma:PH1capW2:PH1H2:P}, it suffices to show that $ \Pro_{H_{1} }x  \notin W_{2} $. 
	If $x \in W_{2}$, then $ \Pro_{H_{1}} x =\Pro_{H_{1}} \Pro_{W_{2}}x \notin  W_{2}$. 
	
	Suppose $x \notin W_{2}$. 
	Assume to the contrary that $ \Pro_{H_{1} }x \in W_{2} $. Then, by \cref{fact:Projec:Hyperplane}, 
	\begin{align*}
	0 \leq \eta_{2} - \innp{\Pro_{H_{1} }x, u_{2} } \Leftrightarrow 0 \leq \eta_{2} -\Innp{x +\frac{\eta_{1} -\innp{x,u_{1}}}{\norm{u_{1}}^{2}} u_{1}, u_{2}} 
	\Leftrightarrow (\eta_{1} -\innp{x,u_{1}}) \innp{u_{1},u_{2}}  \leq \norm{u_{1}}^{2} (\eta_{2} -\innp{x,u_{2}}),		
	\end{align*}
	which, by  \cref{cor:PH2PH1}\cref{cor:PH2PH1:H2H1} with swapping $H_{1}$ and $H_{2}$, and  by $x \notin W_{2}$ and the Cauchy-Schwarz inequality, implies  that
	\begin{align*}
	\eta_{2} - \innp{\Pro_{H_{1} }\Pro_{H_{2} }x, u_{2} } x & =- \frac{1}{ \norm{u_{1}}^{2}  \norm{u_{2}}^{2} } \left( (\eta_{1} -\innp{x,u_{1}}) \norm{u_{2}}^{2} - (\eta_{2} -\innp{x,u_{2}}) \innp{u_{1},u_{2}}  \right) \innp{u_{1},u_{2}}\\
	& \geq - \frac{1}{ \norm{u_{1}}^{2}  \norm{u_{2}}^{2} } \left( \norm{u_{1}}^{2}\norm{u_{2}}^{2} (\eta_{2} -\innp{x,u_{2}}) - (\eta_{2} -\innp{x,u_{2}}) \innp{u_{1},u_{2}}^{2} \right)\\
	&=  (\eta_{2} -\innp{x,u_{2}}) \left( \frac{\innp{u_{1},u_{2}}^{2} }{ \norm{u_{1}}^{2}  \norm{u_{2}}^{2} } -1\right) \geq 0, 
	\end{align*} 
which implies that $ \Pro_{H_{1} }\Pro_{H_{2} }x \in W_{2}$ and contradicts the assumption.  
\end{proof}

\section{Compositions of projections onto hyperplane and halfspace}
Similarly to the \cref{eq:fact:Dykstra:halfspaces},  given finitely many hyperplanes and halfspaces, by \cite[Theorem~9.24]{D2012},  the Boyle-Dykstra Theorem, we are able to use only the projections onto these individual  hyperplanes and halfspaces to generate the sequence according to  the Dykstra's algorithm for finding the
 projection onto the intersection of these hyperplanes and halfspaces. 
 
 In this section, for simplicity, we consider only one hyperplane and one halfspace.  We shall systematically investigate the relations of $ \Pro_{ H_{1} \cap W_{2}  }$ and $\Pro_{W_{2}}\Pro_{H_{1}}$, and of  $ \Pro_{ H_{1} \cap W_{2}  }$ and $\Pro_{H_{1}}\Pro_{W_{2}}$. 
 \label{sec:com:hyperplane:halfspace}

Recall that $u_{1}$ and $u_{2}$ are in $\mathcal{H}$, that  $\eta_{1}$ and $\eta_{2}$ are in $\mathbb{R}$, and that  $H_{1} := \{x \in \mathcal{H} ~:~ \innp{x,u_{1}}= \eta_{1} \}$,  $W_{2} := \{x \in \mathcal{H} ~:~ \innp{x,u_{2}} \leq \eta_{2} \}$,   and $H_{2} := \{x \in \mathcal{H} ~:~ \innp{x,u_{2}} = \eta_{2} \}$.

\subsection*{$u_{1}$ and $u_{2}$ are linearly dependent}

\begin{theorem} \label{theor:u1u2LD:BAM}
	Suppose that $u_{1}$ and $u_{2}$ are linearly dependent and  that $H_{1} \cap W_{2} \neq \varnothing$. Then $\Pro_{W_{2}}\Pro_{H_{1}} =\Pro_{H_{1} \cap W_{2}} =\Pro_{H_{1}} \Pro_{W_{2}}$.  In particular, the following statements hold:
	\begin{enumerate}
		\item \label{item:theor:u1u2LD:BAM:u1=0} Assume that $u_{1} =0$. Then $\Pro_{W_{2}}\Pro_{H_{1}} =\Pro_{ W_{2}} =\Pro_{H_{1}} \Pro_{W_{2}}$.  
		\item \label{item:theor:u1u2LD:BAM:u1neq0} Assume that $u_{1} \neq 0$. Then $\Pro_{W_{2}}\Pro_{H_{1}} =\Pro_{H_{1} } =\Pro_{H_{1}} \Pro_{W_{2}}$. 
	\end{enumerate}
\end{theorem}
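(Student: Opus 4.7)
The plan is to split the argument according to whether $u_{1}=0$, mirroring the two items in the statement. For item \cref{item:theor:u1u2LD:BAM:u1=0}, the hypothesis $H_{1}\cap W_{2}\neq\varnothing$ together with $u_{1}=0$ forces $\eta_{1}=0$ and hence $H_{1}=\mathcal{H}$, so that $\Pro_{H_{1}}=\Id$. Both compositions therefore collapse to $\Pro_{W_{2}}$, and \cref{theor:u1u2LD:PBAM}\cref{item:theor:u1u2LD:PBAM:u1=0} already identifies this with $\Pro_{H_{1}\cap W_{2}}$.

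For item \cref{item:theor:u1u2LD:BAM:u1neq0} I invoke \cref{theor:u1u2LD:PBAM}\cref{item:theor:u1u2LD:PBAM:u1neq0} to record that $H_{1}\cap W_{2}=H_{1}$ and $\Pro_{H_{1}\cap W_{2}}=\Pro_{H_{1}}$; in particular $H_{1}\subseteq W_{2}$. The equality $\Pro_{W_{2}}\Pro_{H_{1}}=\Pro_{H_{1}}$ is then immediate: for every $x\in\mathcal{H}$ we have $\Pro_{H_{1}}x\in H_{1}\subseteq W_{2}$, so $\Pro_{W_{2}}$ fixes it.

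The only substantive equality is $\Pro_{H_{1}}\Pro_{W_{2}}=\Pro_{H_{1}}$, which I would handle by branching on $u_{2}$. If $u_{2}=0$, the condition $H_{1}\cap W_{2}\neq\varnothing$ gives $\eta_{2}\geq 0$ and hence $W_{2}=\mathcal{H}$, so $\Pro_{W_{2}}=\Id$ and the identity is trivial. If $u_{2}\neq 0$, the linear dependence together with \cref{lem:basic:u1u2:LinerDep}\cref{item:lem:basic:u1u2:LinerDep:LD:u1neq0} forces $u_{2}=\pm\tfrac{\norm{u_{2}}}{\norm{u_{1}}}u_{1}$, so any translation along $u_{2}$ is a translation along $u_{1}$. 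For $x\in W_{2}$ the claim is obvious; for $x\in W_{2}^{c}$, \cref{remark:FactWFactH} yields $\Pro_{W_{2}}x=x+\tfrac{\eta_{2}-\innp{x,u_{2}}}{\norm{u_{2}}^{2}}u_{2}\in x+\spn\{u_{1}\}$, and then \cref{fact:Projec:Hyperplane} together with a one-line computation shows that $\Pro_{H_{1}}(x+\lambda u_{1})=\Pro_{H_{1}}x$ for every $\lambda\in\mathbb{R}$, so $\Pro_{H_{1}}\Pro_{W_{2}}x=\Pro_{H_{1}}x$. Equivalently, once $u_{2}\neq 0$ one may apply \cref{cor:PH2PH1}\cref{cor:PH2PH1:H2} with the roles of $H_{1}$ and $H_{2}$ interchanged to obtain $\Pro_{H_{1}}\Pro_{H_{2}}=\Pro_{H_{1}}$ at a stroke, and then use \cref{remark:FactWFactH} on $W_{2}^{c}$.

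The main obstacle is precisely the equality $\Pro_{H_{1}}\Pro_{W_{2}}=\Pro_{H_{1}}$ in the case $x\in W_{2}^{c}$, because $\Pro_{W_{2}}$ then genuinely displaces $x$. The key observation that removes the obstacle is that, by the linear dependence, the displacement lies in $\spn\{u_{1}\}$, a direction along which $\Pro_{H_{1}}$ is invariant (it is absorbed in the correcting term $\tfrac{\eta_{1}-\innp{\cdot,u_{1}}}{\norm{u_{1}}^{2}}u_{1}$ of \cref{fact:Projec:Hyperplane}). This makes the whole argument essentially a two-line calculation once the case analysis is set up.
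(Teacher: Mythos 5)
Your proposal is correct and follows essentially the same route as the paper: both reduce item (i) to $H_{1}=\mathcal{H}$, obtain item (ii)'s first equality from $H_{1}\cap W_{2}=H_{1}$ (hence $H_{1}\subseteq W_{2}$) via \cref{theor:u1u2LD:PBAM}, and handle $\Pro_{H_{1}}\Pro_{W_{2}}=\Pro_{H_{1}}$ by branching on $u_{2}=0$ versus $u_{2}\neq 0$ and, in the latter case, on $x\in W_{2}$ versus $x\in W_{2}^{c}$. The only cosmetic difference is that you spell out the invariance of $\Pro_{H_{1}}$ under translations along $u_{1}$ explicitly, whereas the paper cites \cref{cor:PH2PH1}\cref{cor:PH2PH1:H2} with $H_{1}$ and $H_{2}$ swapped — an alternative you also note.
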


\begin{proof}
	\cref{item:theor:u1u2LD:BAM:u1=0}:  According to \cref{theor:u1u2LD:PBAM}\cref{item:theor:u1u2LD:PBAM:u1=0}, $H_{1}=\mathcal{H}$ and $H_{1} \cap W_{2} =W_{2}$. Then clearly 	\cref{item:theor:u1u2LD:BAM:u1=0} holds.

	\cref{item:theor:u1u2LD:BAM:u1neq0}: In view of \cref{theor:u1u2LD:PBAM}\cref{item:theor:u1u2LD:PBAM:u1neq0}, $H_{1} \cap W_{2} =H_{1}$. Then  clearly  $	\Pro_{W_{2}}\Pro_{H_{1}}=\Pro_{H_{1}} =\Pro_{H_{1} \cap W_{2}}$.
 	
	On the other hand, if $u_{2}=0$, then because $H_{1} \cap W_{2} \neq \varnothing $ implies that $W_{2} \neq \varnothing$, we have that $\eta_{2} \geq 0$ and $W_{2}=\mathcal{H}$. So, 	$\Pro_{H_{1}}\Pro_{W_{2}}=\Pro_{H_{1}}\Pro_{\mathcal{H}}=\Pro_{H_{1}}$.
	Suppose that $u_{2} \neq 0$. Let $x \in \mathcal{H}$. If $x \in W_{2}$, then $\Pro_{H_{1}}\Pro_{W_{2}}x=\Pro_{H_{1}}x$. Assume that $x \notin W_{2}$. Then use \cref{remark:FactWFactH} and apply \cref{cor:PH2PH1}\cref{cor:PH2PH1:H2} with swapping $H_{1}$ and $H_{2}$ to  yield $\Pro_{H_{1}}\Pro_{W_{2}}x   =\Pro_{H_{1}}\Pro_{H_{2}}x =\Pro_{H_{1}}x$.
	Hence, $ \Pro_{H_{1}}\Pro_{W_{2}}=\Pro_{H_{1}}$.
	
	Altogether, we have that  $\Pro_{W_{2}}\Pro_{H_{1}}= \Pro_{H_{1} \cap W_{2}} =\Pro_{H_{1}} =\Pro_{H_{1}}\Pro_{W_{2}}$. 
\end{proof}

\subsection*{$u_{1}$ and $u_{2}$ are linearly independent}
In the whole subsection, we set
\begin{align} \label{eq:C}
C:=\left\{ x \in \mathcal{H} ~:~ (\innp{x,u_{2}} - \eta_{2}) \norm{u_{1}}^{2} - (\innp{x, u_{1}} -\eta_{1}) \innp{u_{1}, u_{2}} >0 \right\}.
\end{align}

\begin{proposition} \label{prop:Candu1u20}
	Suppose that  $u_{1}$ and $u_{2}$ are linearly independent. Then the following hold:
	\begin{enumerate}
		\item \label{prop:Candu1u20:C} Let $x \in \mathcal{H}$. Then $x \in C$ if and only if $\Pro_{H_{1}}x \notin W_{2}$.
		\item \label{prop:Candu1u20:u1u2} Suppose that $\innp{u_{1},u_{2}}=0$. Then $C =W^{c}_{2}$ and $\Pro_{W_{2}}\Pro_{H_{1}} =\Pro_{H_{1} \cap W_{2}} =\Pro_{H_{1}} \Pro_{W_{2}}$.
		\item \label{prop:Candu1u20:C:completment} Let $x \in C^{c}$. Then  $ \Pro_{W_{2}}\Pro_{H_{1}} x=\Pro_{H_{1}}x=\Pro_{H_{1} \cap W_{2}} x$. 
		\item \label{prop:Candu1u20:C:C} Suppose that $\innp{u_{1},u_{2}}\neq 0$.  Let $x \in C$. Then $\Pro_{W_{2}}\Pro_{H_{1}} x=\Pro_{H_{2}}\Pro_{H_{1}} x \in C$. Moreover, $\Pro_{H_{1} \cap W_{2}}\Pro_{W_{2}}\Pro_{H_{1}}x  =\Pro_{H_{1} \cap H_{2}}x=\Pro_{H_{1} \cap W_{2}}x$.
	\end{enumerate}
\end{proposition}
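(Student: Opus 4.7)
The plan is to handle the four items in turn, leaning on the auxiliary lemmas that have already been set up. For item (i), I would simply observe that the defining inequality of $C$ in \cref{eq:C} is (up to rewriting) exactly the inequality appearing in \cref{cor:PH2PH1}\cref{cor:PH2PH1:notin:EQ}; the equivalence $x\in C\Leftrightarrow \Pro_{H_1}x\notin W_2$ is immediate. For item (ii), specializing the defining inequality of $C$ at $\innp{u_1,u_2}=0$ collapses it to $\innp{x,u_2}>\eta_2$, which is the definition of $W_2^c$. The two projection identities then follow by a quick case split according to whether $x\in W_2$ or $x\in W_2^c$: when $x\in W_2$ we use the part (i) equivalence together with \cref{them:Formula:H1capW2}\cref{item:them:Formula:H1capW2:leq0} and the fact that $\Pro_{W_2}x=x$; when $x\in W_2^c$ we use \cref{remark:FactWFactH} to replace $\Pro_{W_2}$ by $\Pro_{H_2}$ and then invoke \cref{cor:PH2PH1}\cref{cor:PH2PH1:H2H1:=0} which gives $\Pro_{H_2}\Pro_{H_1}=\Pro_{H_1\cap H_2}$ (and the analogous identity with the order swapped), which in this case agrees with $\Pro_{H_1\cap W_2}$ via \cref{them:Formula:H1capW2}\cref{item:them:Formula:H1capW2:>0}.

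Item (iii) is essentially a one-line consequence of what has already been assembled: by (i), $x\in C^c$ forces $\Pro_{H_1}x\in W_2$, so $\Pro_{W_2}\Pro_{H_1}x=\Pro_{H_1}x$, while \cref{them:Formula:H1capW2}\cref{item:them:Formula:H1capW2:leq0} gives $\Pro_{H_1\cap W_2}x=\Pro_{H_1}x$ under the sign condition defining $C^c$.

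For item (iv), the first equality $\Pro_{W_2}\Pro_{H_1}x=\Pro_{H_2}\Pro_{H_1}x$ again follows from (i) and \cref{remark:FactWFactH}. The main substantive step is showing $\Pro_{H_2}\Pro_{H_1}x\in C$, which I plan to verify by a direct computation with the explicit formula in \cref{cor:PH2PH1}\cref{cor:PH2PH1:H2H1}. Writing $y:=\Pro_{H_2}\Pro_{H_1}x$ and $A:=(\eta_2-\innp{x,u_2})\norm{u_1}^2-(\eta_1-\innp{x,u_1})\innp{u_1,u_2}$, one has $\innp{y,u_2}=\eta_2$ and $\innp{y,u_1}=\eta_1+A\innp{u_1,u_2}/(\norm{u_1}^2\norm{u_2}^2)$; substituting into the defining expression of $C$ yields $-A\innp{u_1,u_2}^2/(\norm{u_1}^2\norm{u_2}^2)$, which is strictly positive because $x\in C$ forces $A<0$ and the hypothesis $\innp{u_1,u_2}\neq 0$ keeps the square nonzero. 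This is the one place that requires real calculation rather than citation, and it is where I expect the routine bookkeeping to be most delicate (keeping sign conventions straight).

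Finally, to obtain $\Pro_{H_1\cap W_2}\Pro_{W_2}\Pro_{H_1}x=\Pro_{H_1\cap H_2}x=\Pro_{H_1\cap W_2}x$: the rightmost equality is \cref{them:Formula:H1capW2}\cref{item:them:Formula:H1capW2:>0} applied to $x\in C$. Having just shown $\Pro_{H_2}\Pro_{H_1}x\in C$, the same lemma gives $\Pro_{H_1\cap W_2}\Pro_{H_2}\Pro_{H_1}x=\Pro_{H_1\cap H_2}\Pro_{H_2}\Pro_{H_1}x$. Then I apply \cref{fact:ExchangeProj} twice, first with $M=H_1\cap H_2\subseteq N=H_2$ and then with $M=H_1\cap H_2\subseteq N=H_1$ (nonemptiness of the intersection comes from \cref{lem:H1interW2:nonempty}), which collapses $\Pro_{H_1\cap H_2}\Pro_{H_2}\Pro_{H_1}$ to $\Pro_{H_1\cap H_2}$, closing the chain.
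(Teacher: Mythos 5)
Your proposal is correct and follows essentially the same route as the paper: item (i) from \cref{cor:PH2PH1}\cref{cor:PH2PH1:notin:EQ}, items (ii)--(iii) by the same case splits via \cref{them:Formula:H1capW2} and \cref{remark:FactWFactH}, and the same final collapse in (iv) via \cref{them:Formula:H1capW2} and \cref{fact:ExchangeProj}. The only (harmless) deviation is that you verify $\Pro_{H_{2}}\Pro_{H_{1}}x\in C$ by a direct sign computation with \cref{cor:PH2PH1}\cref{cor:PH2PH1:H2H1}, whereas the paper gets the same fact by applying \cref{lemma:x:H1:W2:Px}\cref{lemma:x:H1:W2:Px:W2} to $\Pro_{H_{1}}x\in H_{1}\cap W_{2}^{c}$ together with item (i); your computation is correct.
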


\begin{proof}
\cref{prop:Candu1u20:C}: This is clear from \cref{cor:PH2PH1}\cref{cor:PH2PH1:notin:EQ} and \cref{eq:C}.

\cref{prop:Candu1u20:u1u2}: It is easy to see that $C=W^{c}_{2}$ follows from $\innp{u_{1},u_{2} }=0$,  \cref{eq:C} and the definition of $W_{2}$. Let $x \in \mathcal{H}$. 
Then we have exactly the following two cases:

 Case~1: $x \in C$. Then   \cref{prop:Candu1u20:C} and \cref{remark:FactWFactH} imply $ \Pro_{W_{2}}\Pro_{H_{1}}x =\Pro_{H_{2}}\Pro_{H_{1}}x $. Hence, by $\innp{u_{1},u_{2}}=0$, \cref{them:Formula:H1capW2}\cref{item:them:Formula:H1capW2:>0} and \cref{cor:PH2PH1}\cref{cor:PH2PH1:H2H1:=0},
\begin{align*}
\Pro_{H_{1} \cap W_{2}} x 
=  x + \frac{ \eta_{1} -\innp{u_{1}, x} }{\norm{u_{1}}^{2}} u_{1} +\frac{ \eta_{2} -\innp{u_{2}, x} }{\norm{u_{2}}^{2}} u_{2} =\Pro_{H_{2}}\Pro_{H_{1}}x=\Pro_{W_{2}}\Pro_{H_{1}}x.
\end{align*}
Recall that $x \in C= W^{c}_{2}$. Use \cref{remark:FactWFactH} and \cref{them:Formula:H1capW2}\cref{item:them:Formula:H1capW2:>0}, and apply \cref{cor:PH2PH1}\cref{cor:PH2PH1:H2H1:=0} with swapping $H_{1}$ and $H_{2}$     to obtain that 
\begin{align*}
\Pro_{H_{1}}\Pro_{W_{2}}x= \Pro_{H_{1}}\Pro_{H_{2}}x=x + \frac{ \eta_{1} -\innp{u_{1}, x} }{\norm{u_{1}}^{2}} u_{1} +\frac{ \eta_{2} -\innp{u_{2}, x} }{\norm{u_{2}}^{2}} u_{2}= \Pro_{H_{1} \cap W_{2}} x.
\end{align*}

Case~2: $x \in C^{c}$. Then by \cref{them:Formula:H1capW2}\cref{item:them:Formula:H1capW2:leq0} and \cref{lemma:PH1capW2:PH1H2}\cref{lemma:PH1capW2:PH1H2:P}, $\Pro_{H_{1} \cap W_{2}} x =\Pro_{H_{1}}x$ and   $ \Pro_{H_{1}}x \in W_{2}$. Hence, $\Pro_{W_{2}}\Pro_{H_{1}}x= \Pro_{H_{1}}x=\Pro_{H_{1} \cap W_{2}} x $.
Note that $x \in C^{c} =(W^{c}_{2})^{c}=W_{2}$.  Hence, $ \Pro_{H_{1}}\Pro_{W_{2}}x=\Pro_{H_{1}}x=\Pro_{H_{1} \cap W_{2}} x$.

\cref{prop:Candu1u20:C:completment}:   \cref{them:Formula:H1capW2}\cref{item:them:Formula:H1capW2:leq0} and \cref{lemma:PH1capW2:PH1H2}\cref{lemma:PH1capW2:PH1H2:P} yield  $\Pro_{H_{1} \cap W_{2}}x =\Pro_{H_{1}} x$ and  $\Pro_{H_{1}} x \in   W_{2}$. Hence,  \cref{prop:Candu1u20:C:completment} holds.

\cref{prop:Candu1u20:C:C}: Clearly, \cref{prop:Candu1u20:C} and \cref{remark:FactWFactH} lead to $\Pro_{W_{2}}\Pro_{H_{1}}x = \Pro_{H_{2}}\Pro_{H_{1}}x$.
Because  $\Pro_{H_{1}}x \in H_{1} \cap W^{c}_{2}$, apply \cref{lemma:x:H1:W2:Px}\cref{lemma:x:H1:W2:Px:W2} with $x = \Pro_{H_{1}}x$ to see that $ \Pro_{H_{1}}\Pro_{W_{2}}\Pro_{H_{1}}x  =\Pro_{H_{1}}\Pro_{H_{2}}\Pro_{H_{1}}x \notin W_{2}$. Hence,  apply \cref{prop:Candu1u20:C} with $x = \Pro_{W_{2}}\Pro_{H_{1}}x$ to know that $\Pro_{W_{2}}\Pro_{H_{1}}x \in C$.  Combine this with $x \in C$ and  \cref{them:Formula:H1capW2} to 
have that
$ \Pro_{H_{1} \cap W_{2}} x=\Pro_{H_{1} \cap H_{2}} x$ and $ \Pro_{H_{1} \cap W_{2}} \Pro_{W_{2}}\Pro_{H_{1}}x =\Pro_{H_{1} \cap H_{2}}\Pro_{W_{2}}\Pro_{H_{1}}x  $. Recall that $\Pro_{W_{2}}\Pro_{H_{1}}x = \Pro_{H_{2}}\Pro_{H_{1}}x$. Combine the last three identities with \cref{fact:ExchangeProj} used in the   third equation below to obtain that
\begin{align*}
\Pro_{H_{1} \cap W_{2}}\Pro_{W_{2}}\Pro_{H_{1}}x =\Pro_{H_{1} \cap H_{2}}\Pro_{W_{2}}\Pro_{H_{1}}x =\Pro_{H_{1} \cap H_{2}}\Pro_{H_{2}}\Pro_{H_{1}}x=\Pro_{H_{1} \cap H_{2}}x  =\Pro_{H_{1} \cap W_{2}}x.
\end{align*}
\end{proof}

\begin{theorem} \label{theom:PW2PH1:BAM}
	 Suppose that  $u_{1}$ and $u_{2}$ are linearly independent.
	Set 
	$\gamma:=\frac{|\innp{u_{1}, u_{2}}| }{\norm{u_{1}} \norm{u_{2}}}$.  Then $\gamma \in \left[0,1\right[\,$  and  $\Pro_{W_{2}}\Pro_{H_{1}}$ is a $\gamma$-BAM. 
		Consequently, 
		\begin{align*}
		(\forall x \in \mathcal{H}) (\forall k \in \mathbb{N}) \quad \norm{ (\Pro_{W_{2}}\Pro_{H_{1}})^{k}x - \Pro_{H_{1} \cap W_{2}}x} \leq \gamma^{k} \norm{x -\Pro_{H_{1} \cap W_{2}}x }.
		\end{align*}
\end{theorem}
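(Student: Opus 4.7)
The plan is to verify the three clauses of the BAM definition for $G := \Pro_{W_{2}}\Pro_{H_{1}}$ with constant $\gamma$ and then to apply \cref{fact:BAM:Properties} to deduce the iterate bound. First, $\gamma \in \left[0,1\right[\,$ is already recorded in \cref{lem:H1H2Ineq} via the Cauchy–Schwarz strict inequality (\cref{lem:basic:u1u2:LinerDep}\cref{item:lem:basic:u1u2:LinerDep:LID}) for linearly independent $u_{1},u_{2}$. Next, since $H_{1}\cap W_{2}\neq\varnothing$ by \cref{lem:H1interW2:nonempty}, the set $\Fix(\Pro_{W_{2}}\Pro_{H_{1}})=H_{1}\cap W_{2}$ is a nonempty closed convex subset of $\mathcal{H}$ (by \cite[Corollary~4.5.2]{Cegielski} invoked exactly as in the proof of \cref{thm:PROW1W2:BAM}). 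So it remains to show that, for every $x\in\mathcal{H}$,
\[
\Pro_{H_{1}\cap W_{2}}\Pro_{W_{2}}\Pro_{H_{1}}x=\Pro_{H_{1}\cap W_{2}}x
\quad\text{and}\quad
\norm{\Pro_{W_{2}}\Pro_{H_{1}}x-\Pro_{H_{1}\cap W_{2}}x}\leq\gamma\,\norm{x-\Pro_{H_{1}\cap W_{2}}x}.
\]

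The proof then splits on whether $\innp{u_{1},u_{2}}=0$ and, if not, on whether $x\in C$ or $x\in C^{c}$, where $C$ is defined in \cref{eq:C}. If $\innp{u_{1},u_{2}}=0$ then $\gamma=0$, and \cref{prop:Candu1u20}\cref{prop:Candu1u20:u1u2} gives $\Pro_{W_{2}}\Pro_{H_{1}}=\Pro_{H_{1}\cap W_{2}}$, which makes both clauses trivial. If $\innp{u_{1},u_{2}}\neq 0$ and $x\in C^{c}$, then \cref{prop:Candu1u20}\cref{prop:Candu1u20:C:completment} yields $\Pro_{W_{2}}\Pro_{H_{1}}x=\Pro_{H_{1}}x=\Pro_{H_{1}\cap W_{2}}x$, so again both clauses are immediate. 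The only substantive case is $\innp{u_{1},u_{2}}\neq 0$ with $x\in C$.

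In this case, \cref{prop:Candu1u20}\cref{prop:Candu1u20:C:C} provides the two critical identities
\[
\Pro_{W_{2}}\Pro_{H_{1}}x=\Pro_{H_{2}}\Pro_{H_{1}}x
\quad\text{and}\quad
\Pro_{H_{1}\cap W_{2}}\Pro_{W_{2}}\Pro_{H_{1}}x=\Pro_{H_{1}\cap H_{2}}x=\Pro_{H_{1}\cap W_{2}}x,
\]
which verifies the first BAM clause. For the contraction estimate, the idea is to reduce to two hyperplanes: since $x\in C$, \cref{prop:Candu1u20}\cref{prop:Candu1u20:C} gives $\Pro_{H_{1}}x\notin W_{2}$, so \cref{them:Formula:H1capW2}\cref{item:them:Formula:H1capW2:>0} gives $\Pro_{H_{1}\cap W_{2}}x=\Pro_{H_{1}\cap H_{2}}x$. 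Combining this with $\Pro_{W_{2}}\Pro_{H_{1}}x=\Pro_{H_{2}}\Pro_{H_{1}}x$ and then invoking \cref{lem:H1H2Ineq} (with $k=1$) yields
\[
\norm{\Pro_{W_{2}}\Pro_{H_{1}}x-\Pro_{H_{1}\cap W_{2}}x}
=\norm{\Pro_{H_{2}}\Pro_{H_{1}}x-\Pro_{H_{1}\cap H_{2}}x}
\leq\gamma\,\norm{x-\Pro_{H_{1}\cap H_{2}}x}
=\gamma\,\norm{x-\Pro_{H_{1}\cap W_{2}}x}.
\]
Once the BAM property is established, the iterate inequality is a direct application of \cref{fact:BAM:Properties}.

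There is no real obstacle left: every case reduction has been pre-engineered by \cref{prop:Candu1u20} and \cref{them:Formula:H1capW2}, and the linear-convergence estimate in the nontrivial case is imported from the hyperplane version \cref{lem:H1H2Ineq}. The delicate point is simply to recognize that $x\in C$ is precisely the regime in which $\Pro_{H_{1}\cap W_{2}}$ agrees with $\Pro_{H_{1}\cap H_{2}}$, so the halfspace $W_{2}$ can be swapped for the hyperplane $H_{2}$ without affecting either the image of $\Pro_{W_{2}}\Pro_{H_{1}}x$ or the target of the contraction.
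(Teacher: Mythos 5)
Your proposal is correct and follows essentially the same route as the paper's own proof: establish $\gamma\in\left[0,1\right[\,$ via \cref{lem:H1H2Ineq}, identify $\Fix(\Pro_{W_{2}}\Pro_{H_{1}})=H_{1}\cap W_{2}$ via \cite[Corollary~4.5.2]{Cegielski}, split into the cases $x\in C^{c}$ or $\innp{u_{1},u_{2}}=0$ (where \cref{prop:Candu1u20} gives equality with $\Pro_{H_{1}\cap W_{2}}$) versus $x\in C$ with $\innp{u_{1},u_{2}}\neq 0$ (where \cref{prop:Candu1u20}\cref{prop:Candu1u20:C:C} reduces everything to the two-hyperplane estimate of \cref{lem:H1H2Ineq}), and finish with \cref{fact:BAM:Properties}. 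No substantive differences.
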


\begin{proof}
	According to \cref{lem:H1H2Ineq},  $\gamma \in \left[0,1\right[\,$. 
	 Let $x \in \mathcal{H}$. 
	 By  \cite[Corollary~4.5.2]{Cegielski}, $\Fix \Pro_{W_{2}}\Pro_{H_{1}}=H_{1} \cap W_{2}$ is a nonempty closed convex subset of $\mathcal{H}$. Hence, using \cref{def:BAM} and \cref{fact:BAM:Properties}, we only need to prove the following two statements:
	\begin{enumerate}
		\item \label{item:theom:PW2PH1:BAM:eq} $\Pro_{H_{1} \cap W_{2}}\Pro_{W_{2}}\Pro_{H_{1}} x=\Pro_{H_{1} \cap W_{2}}x$.
		\item \label{item:theom:PW2PH1:BAM:ineq}  $\norm{\Pro_{W_{2}}\Pro_{H_{1}} x -\Pro_{H_{1} \cap W_{2}}x} \leq \gamma \norm{x- \Pro_{H_{1} \cap W_{2}}x}$.
	\end{enumerate}
If $x \in C^{c}$ or if $\innp{u_{1},u_{2}} =0$, then by \cref{prop:Candu1u20}\cref{prop:Candu1u20:C:completment}$\&$\cref{prop:Candu1u20:u1u2}, $ \Pro_{W_{2}}\Pro_{H_{1}} x=\Pro_{H_{1} \cap W_{2}} x$, which   implies \cref{item:theom:PW2PH1:BAM:eq} and \cref{item:theom:PW2PH1:BAM:ineq}.

Suppose that $x \in C$ and that $\innp{u_{1},u_{2}} \neq 0$. Then   \cref{prop:Candu1u20}\cref{prop:Candu1u20:C:C} implies that  \cref{item:theom:PW2PH1:BAM:eq} holds and that $\Pro_{W_{2}} \Pro_{H_{1}} x =\Pro_{H_{2}} \Pro_{H_{1}} x$ and 	$\Pro_{H_{1} \cap W_{2}}x =  \Pro_{H_{1} \cap H_{2}}x$.
Combine these identities   with  \cref{lem:H1H2Ineq} to obtain that
	\begin{align*}  
	\norm{\Pro_{W_{2}} \Pro_{H_{1}} x - \Pro_{H_{1} \cap W_{2}}x  } =	\norm{\Pro_{H_{2}} \Pro_{H_{1}} x - \Pro_{H_{1} \cap H_{2}}x  } \leq \gamma \norm{x - \Pro_{H_{1} \cap H_{2}}x }= \gamma \norm{x - \Pro_{H_{1} \cap W_{2}}x },
	\end{align*} 
	which yields
	\cref{item:theom:PW2PH1:BAM:ineq}. Altogether, the proof is complete.
	\end{proof}

	\begin{proposition} \label{prop:H1W2}
		 Suppose that  $u_{1}$ and $u_{2}$ are linearly independent. Denote by $\gamma :=\frac{|\innp{u_{1}, u_{2}}| }{\norm{u_{1}} \norm{u_{2}}}$. Then exactly one of the following holds:
		 \begin{enumerate}
		 	\item \label{prop:H1W2:i} $  \Pro_{H_{1}} \Pro_{W_{2}}x \in  W_{2}$. Then $  \Pro_{H_{1}} \Pro_{W_{2}}x \in  H_{1} \cap W_{2}$.
		 	\item \label{prop:H1W2:ii} $  \Pro_{H_{1}} \Pro_{W_{2}}x \notin   W_{2}$. Then $(\forall k \in \mathbb{N} )$ $\Pro_{W_{2}} (\Pro_{H_{1}} \Pro_{W_{2}})^{k}x   \notin H_{1}$ and $  (\Pro_{H_{1}}\Pro_{W_{2}})^{k+1}x  \notin W_{2}$. Moreover, for every  $k \in \mathbb{N}$,
		 	\begin{subequations}
		 		\begin{align}
		 		(\forall x \in W_{2}) \quad &\norm{ \Pro_{W_{2}} (\Pro_{H_{1}}\Pro_{W_{2}} )^{k} x- \Pro_{H_{1} \cap W_{2}}x } \leq \gamma^{k} \norm{x -\Pro_{H_{1} \cap W_{2}}x}, \label{eq:prop:H1W2:W2}\\
		 			(\forall x \in W^{c}_{2}) \quad  &\norm{  (\Pro_{H_{1}}\Pro_{W_{2}} )^{k} - \Pro_{H_{1} \cap W_{2}}x }  \leq \gamma^{k} \norm{x -\Pro_{H_{1} \cap W_{2}}x }.\label{eq:prop:H1W2:W2c}
		 			\end{align}
		 		\end{subequations}
		 \end{enumerate}
	\end{proposition}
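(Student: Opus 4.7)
The two cases are determined by whether or not $\Pro_{H_{1}}\Pro_{W_{2}}x\in W_{2}$, so they are exhausted and mutually exclusive. Case \cref{prop:H1W2:i} is essentially by definition: $\Pro_{H_{1}}\Pro_{W_{2}}x\in H_{1}$ because $\Pro_{H_{1}}$ projects onto $H_{1}$, so if this point also lies in $W_{2}$, then it lies in $H_{1}\cap W_{2}$. The substance is case \cref{prop:H1W2:ii}.

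Standing in case \cref{prop:H1W2:ii}, I would first observe that $\innp{u_{1},u_{2}}\neq 0$: indeed, if $\innp{u_{1},u_{2}}=0$ then \cref{prop:Candu1u20}\cref{prop:Candu1u20:u1u2} gives $\Pro_{H_{1}}\Pro_{W_{2}}x=\Pro_{H_{1}\cap W_{2}}x\in W_{2}$, contradicting the hypothesis. This unlocks \cref{lemma:x:H1:W2:Px}, the main workhorse. Set $y:=\Pro_{H_{1}}\Pro_{W_{2}}x$; the hypothesis says exactly that $y\in H_{1}\cap W^{c}_{2}$.

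For the alternation assertion I would handle $k=0$ by hand and then defer everything else to \cref{lemma:x:H1:W2:Px}\cref{lemma:x:H1:W2:Px:k}. At $k=0$: if $\Pro_{W_{2}}x\in H_{1}$, then $\Pro_{H_{1}}\Pro_{W_{2}}x=\Pro_{W_{2}}x\in W_{2}$, contrary to the case hypothesis, so $\Pro_{W_{2}}x\notin H_{1}$; and $\Pro_{H_{1}}\Pro_{W_{2}}x\notin W_{2}$ is the hypothesis. For $k\geq 1$ I would use the identity
\[
\Pro_{W_{2}}(\Pro_{H_{1}}\Pro_{W_{2}})^{k}x\;=\;(\Pro_{W_{2}}\Pro_{H_{1}})^{k}y,
\]
which follows from $\Pro_{H_{1}}y=y$, and then invoke \cref{lemma:x:H1:W2:Px}\cref{lemma:x:H1:W2:Px:k} applied at $y$ to get both $(\Pro_{W_{2}}\Pro_{H_{1}})^{k}y\notin H_{1}$ and $\Pro_{H_{1}}(\Pro_{W_{2}}\Pro_{H_{1}})^{k}y\notin W_{2}$.

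For the norm estimates I would convert everything to hyperplane projections and apply \cref{lem:H1H2Ineq}, using the crucial identification $\Pro_{H_{1}\cap W_{2}}x=\Pro_{H_{1}\cap H_{2}}x$ from \cref{lemma:PH1capW2:PH1H2}\cref{lemma:PH1capW2:PH1H2:notinW2}. Split on whether $x\in W_{2}$:
\begin{enumerate}
\item If $x\in W_{2}$, then $y=\Pro_{H_{1}}x$, and Lemma \cref{lemma:x:H1:W2:Px}\cref{lemma:x:H1:W2:Px:k} lets me replace every $\Pro_{W_{2}}$ in the iterate by $\Pro_{H_{2}}$, so that $\Pro_{W_{2}}(\Pro_{H_{1}}\Pro_{W_{2}})^{k}x=(\Pro_{H_{2}}\Pro_{H_{1}})^{k}y=(\Pro_{H_{2}}\Pro_{H_{1}})^{k}x$ for $k\geq 1$; the last equality uses $\Pro_{H_{1}}\Pro_{H_{1}}=\Pro_{H_{1}}$. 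Now \cref{lem:H1H2Ineq} delivers \cref{eq:prop:H1W2:W2}; the $k=0$ instance is trivial since both sides equal $\norm{x-\Pro_{H_{1}\cap W_{2}}x}$.
\item If $x\in W^{c}_{2}$, then \cref{remark:FactWFactH} gives $\Pro_{W_{2}}x=\Pro_{H_{2}}x$, and I would prove by induction on $k\geq 1$ that $(\Pro_{H_{1}}\Pro_{W_{2}})^{k}x=(\Pro_{H_{1}}\Pro_{H_{2}})^{k}x$, the induction step invoking $(\Pro_{H_{1}}\Pro_{W_{2}})^{k}x\notin W_{2}$ from the alternation result to replace $\Pro_{W_{2}}$ by $\Pro_{H_{2}}$ via \cref{remark:FactWFactH}. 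Applying \cref{lem:H1H2Ineq} with the roles of $u_{1},u_{2}$ interchanged yields \cref{eq:prop:H1W2:W2c}.
\end{enumerate}

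The main obstacle will be the careful bookkeeping needed to shift indices and iterates so that the purely hyperplane composition $(\Pro_{H_{2}}\Pro_{H_{1}})^{k}$ (or $(\Pro_{H_{1}}\Pro_{H_{2}})^{k}$) appears in the right form for \cref{lem:H1H2Ineq}; the key enabling fact is that along the orbit every application of $\Pro_{W_{2}}$ lands in $W^{c}_{2}$ and is therefore equal to $\Pro_{H_{2}}$, which is exactly what \cref{lemma:x:H1:W2:Px}\cref{lemma:x:H1:W2:Px:k} secures once the base point $y$ is placed in $H_{1}\cap W^{c}_{2}$.
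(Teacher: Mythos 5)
Your proposal is correct and follows essentially the same route as the paper's proof: set $y:=\Pro_{H_{1}}\Pro_{W_{2}}x\in H_{1}\cap W_{2}^{c}$, invoke \cref{lemma:x:H1:W2:Px}\cref{lemma:x:H1:W2:Px:k} at $y$ for the alternation statement, identify $\Pro_{H_{1}\cap W_{2}}x=\Pro_{H_{1}\cap H_{2}}x$ via \cref{lemma:PH1capW2:PH1H2}\cref{lemma:PH1capW2:PH1H2:notinW2}, and conclude with \cref{lem:H1H2Ineq} after splitting on $x\in W_{2}$ versus $x\in W_{2}^{c}$. Your explicit verification that $\innp{u_{1},u_{2}}\neq 0$ in case \cref{prop:H1W2:ii} (a hypothesis of \cref{lemma:x:H1:W2:Px}) is a detail the paper leaves implicit, and is a welcome addition.
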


\begin{proof}
	\cref{prop:H1W2:i}: This is trivial. 
	
	\cref{prop:H1W2:ii}:
Apply \cref{lem:H1H2Ineq} with swapping $H_{1}$ and $H_{2}$  to obtain that
	$\gamma \in \left[0,1\right[\,$ and  for every $x \in \mathcal{H}$ and $k \in \mathbb{N}$,
	\begin{subequations}
			\begin{align}
	 &\norm{ (\Pro_{H_{2}}  \Pro_{H_{1}} )^{k} x-  \Pro_{H_{1} \cap H_{2}}x } \leq \gamma^{k} \norm{x -\Pro_{H_{1} \cap H_{2}}x }, \label{eq:prop:H1W2:12}\\
 &\norm{ (\Pro_{H_{1}}  \Pro_{H_{2}} )^{k} x-  \Pro_{H_{1} \cap H_{2}}x } \leq \gamma^{k} \norm{x -\Pro_{H_{1} \cap H_{2}}x }.\label{eq:prop:H1W2:21}
		\end{align}
	\end{subequations}

According to \cref{lemma:PH1capW2:PH1H2}\cref{lemma:PH1capW2:PH1H2:notinW2},  $ \Pro_{H_{1} \cap W_{2}}x=\Pro_{H_{1} \cap H_{2}}x$. Set $y:=\Pro_{H_{1}} \Pro_{W_{2}}x \in H_{1} \cap  W^{c}_{2}$. 
Apply \cref{lemma:x:H1:W2:Px}\cref{lemma:x:H1:W2:Px:k}  to the point $y$ to obtain that for every $k \in \mathbb{N}$, ($k=0$ is trivial)
\begin{align}
& (\Pro_{W_{2}}\Pro_{H_{1}})^{k}y =	(\Pro_{H_{2}}\Pro_{H_{1}})^{k}y \notin H_{1}, \text{~i.e.,~} \Pro_{W_{2}}(\Pro_{H_{1}}\Pro_{W_{2}})^{k}x=(\Pro_{H_{2}}\Pro_{H_{1}})^{k}  \Pro_{W_{2}}x \notin H_{1} \text{ and} \label{eq:prop:H1W2:notinH1}\\
& \Pro_{H_{1}}(\Pro_{W_{2}}\Pro_{H_{1}})^{k}y =\Pro_{H_{1}}(\Pro_{H_{2}}\Pro_{H_{1}})^{k}y\notin W_{2},  \text{ i.e., } (\Pro_{H_{1}}\Pro_{W_{2}})^{k+1}  x=\Pro_{H_{1}}(\Pro_{H_{2}}\Pro_{H_{1}})^{k} \Pro_{W_{2}}x \notin W_{2}. \label{eq:prop:H1W2:notinW2}
\end{align}

Moreover, if $x \in W_{2}$, then by \cref{eq:prop:H1W2:notinH1}, $ \Pro_{W_{2}}(\Pro_{H_{1}}\Pro_{W_{2}})^{k}x=(\Pro_{H_{2}}\Pro_{H_{1}})^{k}  x$.  Hence,   \cref{eq:prop:H1W2:12} and $ \Pro_{H_{1} \cap W_{2}}x=\Pro_{H_{1} \cap H_{2}}x$ yield  \cref{eq:prop:H1W2:W2}.

If $x \notin W_{2}$, then by \cref{eq:prop:H1W2:notinW2}, $(\Pro_{H_{1}}\Pro_{W_{2}})^{k+1}  x=\Pro_{H_{1}}(\Pro_{H_{2}}\Pro_{H_{1}})^{k} \Pro_{H_{2}}x= (\Pro_{H_{1}}\Pro_{H_{2}})^{k+1}x $. Hence,   \cref{eq:prop:H1W2:21} and $ \Pro_{H_{1} \cap W_{2}}x=\Pro_{H_{1} \cap H_{2}}x$ imply that \cref{eq:prop:H1W2:W2c} is true.
	
	Altogether, the proof is complete.
\end{proof}

To conclude this section, we summarize the results obtained in this section in the following theorem. 
\begin{theorem} \label{theom:HyperplaneHalfspace:BAM}
Recall that $u_{1}$ and $u_{2}$ are in $\mathcal{H}$, that $\eta_{1}$ and $\eta_{2}$ are in $\mathbb{R}$, and that $H_{1} := \{x \in \mathcal{H} ~:~ \innp{x,u_{1}}= \eta_{1} \}$, $
W_{2} := \{x \in \mathcal{H} ~:~ \innp{x,u_{2}} \leq \eta_{2} \}$, $H_{2} := \{x \in \mathcal{H} ~:~ \innp{x,u_{2}} = \eta_{2} \}$.
	Then exactly one of the following statements hold.
	\begin{enumerate}
		\item $u_{1}$ and $u_{2}$ are linearly dependent. Then $\Pro_{W_{2}}\Pro_{H_{1}} =\Pro_{H_{1} \cap W_{2}} =\Pro_{H_{1}} \Pro_{W_{2}}$. $($See \cref{theor:u1u2LD:BAM}$).$
		\item $u_{1}$ and $u_{2}$ are linearly independent.  Then the following hold:
		\begin{enumerate}
			\item Suppose that $\innp{u_{1},u_{2}}=0$. Then  $\Pro_{W_{2}}\Pro_{H_{1}} =\Pro_{H_{1} \cap W_{2}} =\Pro_{H_{1}} \Pro_{W_{2}}$.$($See \cref{prop:Candu1u20}\cref{prop:Candu1u20:u1u2}$).$
			\item Suppose that $\innp{u_{1},u_{2}} \neq 0$. Denote by $\gamma :=\frac{|\innp{u_{1}, u_{2}}| }{\norm{u_{1}} \norm{u_{2}}}$. Then $\gamma \in \left[0,1\right[\,$. Moreover,
			\begin{enumerate}
				\item $(\forall x \in \mathcal{H})$ $(\forall k \in \mathbb{N}) $ $\norm{ (\Pro_{W_{2}}\Pro_{H_{1}})^{k}x - \Pro_{H_{1} \cap W_{2}}x} \leq \gamma^{k} \norm{x -\Pro_{H_{1} \cap W_{2}}x }$. $($See \cref{theom:PW2PH1:BAM}$).$
				\item  Let $x \in \mathcal{H}$ and $k \in \mathbb{N}$.  If $  \Pro_{H_{1}} \Pro_{W_{2}}x \notin H_{1} \cap W_{2}$, then 	$(\forall x \in W_{2})$   $\norm{ \Pro_{W_{2}} (\Pro_{H_{1}}\Pro_{W_{2}} )^{k} x-  \Pro_{H_{1} \cap W_{2}}x   } \leq \gamma^{k} \norm{x -\Pro_{H_{1} \cap W_{2}}x  }$, and $ (\forall x \in W^{c}_{2}) $   $\norm{  (\Pro_{H_{1}}\Pro_{W_{2}} )^{k} - \Pro_{H_{1} \cap W_{2}}x }  \leq \gamma^{k} \norm{x -\Pro_{H_{1} \cap W_{2}}x  }$.				
				  $($See \cref{prop:H1W2}$).$
			\end{enumerate}
		\end{enumerate}
	\end{enumerate}
\end{theorem}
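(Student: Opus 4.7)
The statement \cref{theom:HyperplaneHalfspace:BAM} is a summary theorem, so my plan is essentially to invoke the results proved earlier in this section and to verify that each case exhausts exactly one logical branch. First I would note that the statement partitions $\mathcal{H}$ into the two mutually exclusive cases determined by whether $u_{1},u_{2}$ are linearly dependent or linearly independent, so it suffices to handle each branch separately and to confirm that $\gamma$ is well-defined and lies in $\left[0,1\right[\,$ whenever $u_{1},u_{2}$ are linearly independent.

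In the linearly dependent branch, I would apply \cref{theor:u1u2LD:BAM} directly; this immediately gives the identity $\Pro_{W_{2}}\Pro_{H_{1}}=\Pro_{H_{1}\cap W_{2}}=\Pro_{H_{1}}\Pro_{W_{2}}$ that is claimed. The only subtlety is the hidden hypothesis $H_{1} \cap W_{2} \neq \varnothing$ required by \cref{theor:u1u2LD:BAM}; I would point out that, in the linearly dependent setting, if $H_{1}\cap W_{2}=\varnothing$ then the projection $\Pro_{H_{1}\cap W_{2}}$ is undefined and the statement is vacuous, so we may assume nonemptiness throughout the dependent case.

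In the linearly independent branch, I would split according to $\innp{u_{1},u_{2}}$. For the orthogonal subcase $\innp{u_{1},u_{2}}=0$, the identity $\Pro_{W_{2}}\Pro_{H_{1}}=\Pro_{H_{1}\cap W_{2}}=\Pro_{H_{1}}\Pro_{W_{2}}$ is exactly the content of \cref{prop:Candu1u20}\cref{prop:Candu1u20:u1u2}. For the subcase $\innp{u_{1},u_{2}} \neq 0$, the Cauchy--Schwarz inequality together with \cref{lem:basic:u1u2:LinerDep}\cref{item:lem:basic:u1u2:LinerDep:LID} gives $\gamma \in \left[0,1\right[\,$; then the first linear rate assertion is precisely \cref{theom:PW2PH1:BAM}, and the second is precisely \cref{prop:H1W2}\cref{prop:H1W2:ii}, keeping in mind that \cref{prop:H1W2}\cref{prop:H1W2:i} covers the case $\Pro_{H_{1}}\Pro_{W_{2}}x \in H_{1}\cap W_{2}$ (which is excluded by the hypothesis of the second inequality).

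Since every assertion of \cref{theom:HyperplaneHalfspace:BAM} is a verbatim restatement of a lemma or proposition established earlier, there is no real obstacle: the proof is a short bookkeeping argument. The only thing that requires a line of care is confirming that the three bullets in the independent, $\innp{u_{1},u_{2}} \neq 0$ subcase dovetail cleanly, i.e., that $\gamma$ is the same constant throughout and that the two linear-rate statements do not conflict on their overlap, which is immediate from the common definition $\gamma = |\innp{u_{1},u_{2}}|/(\norm{u_{1}}\norm{u_{2}})$.
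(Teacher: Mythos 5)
Your proposal is correct and follows essentially the same route as the paper, which offers no separate proof of \cref{theom:HyperplaneHalfspace:BAM} beyond the parenthetical citations of \cref{theor:u1u2LD:BAM}, \cref{prop:Candu1u20}\cref{prop:Candu1u20:u1u2}, \cref{theom:PW2PH1:BAM}, and \cref{prop:H1W2}. Your two bookkeeping remarks --- that the linearly dependent case implicitly needs $H_{1}\cap W_{2}\neq\varnothing$, and that the hypothesis $\Pro_{H_{1}}\Pro_{W_{2}}x\notin H_{1}\cap W_{2}$ is equivalent to the hypothesis $\Pro_{H_{1}}\Pro_{W_{2}}x\notin W_{2}$ of \cref{prop:H1W2}\cref{prop:H1W2:ii} because $\Pro_{H_{1}}\Pro_{W_{2}}x\in H_{1}$ always --- are both accurate and are the only points of care the statement requires.
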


\section{Conclusion and future work}

We provided an explicit formula of the projection onto intersection of finitely many hyperplanes.
We also shown KKT conditions for characterizing the optimal solution of convex optimization with finitely many inequality and equality constraints in Hilbert spaces. Moreover, we constructed  formulae of projections onto the intersection of hyperplane and halfspace. 
In addition, we systematically investigated the relations between: $\Pro_{W_{2}}\Pro_{W_{1}}$ and $ \Pro_{W_{1} \cap W_{2}}$,  $ \Pro_{H_{1} \cap W_{2}}$ and   $\Pro_{W_{2}}\Pro_{H_{1}}$, and  $ \Pro_{H_{1} \cap W_{2}}$  and $\Pro_{H_{1}} \Pro_{W_{2}}$. 

According to the  explicit formulae for  $\Pro_{ W_{1} \cap W_{2}}$ and $\Pro_{ H_{1} \cap W_{2}  }$ (see, \cref{fact:W1capW2:u1u2LD}, \cref{fact:Projec:Intersect:halfspace}, \cref{theor:u1u2LD:PBAM}, and \cref{them:Formula:H1capW2}), given a point $x \in \mathcal{H}$, the formulae of $\Pro_{ W_{1} \cap W_{2}}x$ and $\Pro_{ H_{1} \cap W_{2}  }x$  depend on the linear dependence relation of $u_{1}$ and $u_{2}$, and on the \enquote{region} where the $x$ is located in.  Hence, in many proofs of  this work, we mainly argued by cases and considered our questions on two halfspaces, or on one halfspace and one hyperplane. It is easy to see that if we wanted to use the current logic of proofs to extend our results from two sets to finitely many sets, then the number of cases to argue would increase exponentially. In the future, we shall try to find some techniques or tricks to make the extension work easy and the statements of the results simple.

\section*{Acknowledgements}
The author thanks the anonymous referee for his or her valuable comments which significantly improved this manuscript.

\addcontentsline{toc}{section}{References}

\bibliographystyle{abbrv}

\end{document}